\tikzset{every arrow subpath/.style={->, draw, ultra thick}}
\newtheorem{theorem}{Theorem}[section]
\newtheorem{cor}[theorem]{Corollary}
\newtheorem{lemma}[theorem]{Lemma}
\newtheorem{definition}[theorem]{Definition}
\theoremstyle{definition}
\newtheorem{remark}[theorem]{Remark}
\numberwithin{equation}{section}
\newcommand{\CSPP}{CStrPP\xspace}
\newcommand{\CSPPs}{CStrPPs\xspace}
\newcommand{\RSPP}{RStrPP\xspace}
\newcommand{\RSPPs}{RStrPPs\xspace}
\let\sp\relax
\DeclareMathOperator{\sp}{sp}
\DeclareMathOperator{\sgn}{sgn}
\DeclareMathOperator{\inv}{inv}
\newcommand{\nenwarrow}{\nwarrow \!\!\!\!\!\;\!\! \nearrow}
\newcommand{\e}{{\operatorname{E}}}
\newcommand{\W}{{\operatorname{w}}}
\newcommand{\asym}{\mathbf{ASym}}
\newcommand{\horizontaltile}{%
	\tikz[scale=.35]{%
		\draw (0,0) -- (0.866,-.5) -- (1.732,0) -- (0.866,.5) -- cycle;
	}%
}
\definecolor{verylight-gray}{gray}{0.95}
\definecolor{light-gray}{gray}{0.9}
\colorlet{mygreen}{Green}
\colorlet{myyellow}{yellow}
\colorlet{myblue}{NavyBlue}
\colorlet{myred}{BrickRed}
\title[Vertically symmetric alternating sign matrices]{Alternating sign matrices with reflective symmetry and plane partitions: \boldmath $n+3$ pairs of equivalent statistics and a Cauchy-type identity} 
\keywords{Alternating sign matrices, plane partitions, monotone triangles, lozenge tilings, symmetric functions}
\subjclass[2020]{05A05, 05A15, 05A19, 15B35}
\author{Ilse Fischer}
\address{University of Vienna, Austria}
\urladdr{https://www.mat.univie.ac.at/~ifischer/}
\author{Hans H\"ongesberg}
\address{University of Ljubljana, Slovenia}
\urladdr{https://hoengesberg.com/}
\thanks{The authors acknowledge support from the Austrian Science Fund (FWF) grant 10.55776/P3493. Additionally, the first author acknowledges support from FWF grant 10.55776/F1002, while the second author acknowledges support from FWF grant 10.55776/J4810. For open access purposes, the authors have applied a CC BY public copyright license to any author accepted manuscript version arising from this submission.}
\begin{document}

\begin{abstract}
Vertically symmetric alternating sign matrices (VSASMs) of order $2n+1$ are known to be equinumerous with 
lozenge tilings of a hexagon with side lengths $2n+2,2n,2n+2,2n,2n+2,2n$ and a central triangular hole of size $2$ that exhibit a cyclical as well as a vertical symmetry, but finding an explicit bijection proving this belongs to the most difficult problems in bijective combinatorics. Towards constructing such a bijection, we generalize 
the result by introducing certain natural extensions for both objects along with $n+3$ parameters and show that the multivariate generating functions with respect to these parameters coincide. 
This is a significant step from a constant number of equidistributed statistics to a linear number of statistics in $n$.
The equinumeracy of VSASMs and the lozenge tilings is then an easy consequence of this result, which is obtained by specializing the generating functions to signed enumerations for both types of objects and then applying certain sign-reversing involutions.
Another main result concerns the expansion of the 
multivariate generating function into symplectic characters as a sum over totally symmetric self-complementary plane partitions, which is 
in perfect analogy to the situation for ordinary ASMs where the Schur expansion can be written as a sum over totally symmetric plane partitions.
This is exciting as it is reminiscent of the well-known Cauchy identity, and the Cauchy identity does have a bijective proof using the Robinson--Schensted--Knuth correspondence, and thus the result raises the question of whether there is a variation of the Robinson--Schensted--Knuth correspondence that does eventually lead to a bijective proof.
\end{abstract}

\maketitle

\section{Introduction}

An \emph{alternating sign matrix} (ASM) is a square matrix with entries in $\{0,\pm 1\}$ such that, in each row and column, the nonzero entries alternate and add up to $1$. An example is displayed next:
\[
\begin{pmatrix}
0 & 0 & 0 & 1 & 0 & 0 & 0 \\
0 & 1 & 0 &-1 & 0 & 1 & 0 \\
1 & -1& 0 & 1 & 0 &-1 & 1 \\
0 & 0 & 1 &-1 & 1 & 0 & 0 \\
0 & 1 & -1& 1&-1 & 1 & 0 \\
0 & 0 & 1 &-1& 1 & 0 & 0 \\
0 & 0 & 0 & 1 & 0 & 0 & 0
 \end{pmatrix} 
\]
It is well-known, see 
\cite{And79,Zei96a,Kra06}, that there is the same number of $n \times n$ ASMs as there is of \emph{cyclically symmetric lozenge tilings} of a hexagon with side lengths $n+1,n-1,n+1,n-1,n+1,n-1$ with a central triangular hole of size $2$, but no explicit bijection has been constructed so far. These lozenge tilings are in easy bijective correspondence with 
\emph{descending plane partitions} (DPPs) with parts no greater than $n$. 
In \cite{nASMDPP}, a certain refinement of this result was provided that involves $n+3$ statistics on both sides. 
Prior to this result, four statistics were known on ASMs and on DPPs that have the same joint distribution. 
Note that the discovery of the (right) quadratic number of statistics  would resolve the problem of establishing a bijection as the two families of objects are assembled of about a quadratic number  of integers.\footnote{For instance, a monotone triangle of order $n$ is comprised of $1+2+\ldots+n=\frac{(n+1)n}{2}$ integers, which we could also view as statistics (for example, the top entry of a monotone triangle is a well-studied classical statistic with an equivalent counterpart on DPPs, see \cite{MilRobRum83,BehDifZin12}). 
If we were to find a set of $\frac{(n+1)n}{2}$ statistics on the DPP-side that has the same joint distribution, then the bijection would have been constructed.}
The  refinement in \cite{nASMDPP}
made it necessary to work with extended objects, which evolved naturally through the $n+3$ pairs of statistics that were included, and it was shown that the joint distributions of the two sets of $n+3$ statistics coincide. A general introduction into this fascinating area of missing bijections is provided in \cite{Bre99} and a recent update is given in \cite{cube}. 

The purpose of the current paper also is to shed new light on the mysterious relation between ASMs and plane partitions. More specifically, it is known that, in perfect analogy to the result mentioned above, vertically symmetric $(2n+1) \times (2n+1)$ alternating sign matrices (VSASMs) are equinumerous with cyclically symmetric lozenge tilings of a hexagon with side lengths $2n+2,2n,2n+2,2n,2n+2,2n$ and with a central triangular hole of size $2$ that exhibit an additional axial symmetry, see \cite{MRR87,Kup94}, however, a conceptual explanation of this fact is still missing. Such lozenge tilings are illustrated in Figure~\ref{fig:ExampleRhombusTiling}. In the present paper, we provide  several $(n+3)$-parameter refinements of the relation between VSASMs and certain families of nonintersecting lattice paths or plane partition objects.

For one of these results (Theorem~\ref{interpret1}), we show that it extends the relation between VSASMs and the lozenge tilings mentioned above. Another of these results (Theorem~\ref{interpret4}) implies that the expansion of the generating function of VSASMs with respect to the $n+3$ parameters when specializing two of the parameters to $1$ into \emph{symplectic characters} can be written as a sum over \emph{totally symmetric self-complementary plane partitions} (Corollary~\ref{cor:expansion}). This result was conjectured independently by Florian Schreier-Aigner, and it is in perfect analogy to another very recent result on ordinary 
ASMs, which expresses the \emph{Schur function expansion} of their multivariate generating function as a sum over \emph{totally symmetric plane partitions}, see \cite{TSPP}.
Further information on the origin and history of the enumeration of VSASMs and other symmetry classes of ASMs can be found in \cite{Rob91,Rob00,Kup02}, as well as in the overview provided in \cite{BehFisKon17}.

Before we give a detailed description of our main results in the next section, we explain briefly how we obtain them. We choose to work with \emph{monotone triangles} instead of ASMs, but there is 
an easy bijective correspondence between $n \times n$ ASMs and monotone triangles with bottom row $1,2,\ldots,n$ as indicated at the beginning of the next section. In \cite{nASMDPP}, an $(n+3)$-parameter refinement of the \emph{operator formula} \cite{Fis06} 
for the number of monotone triangles with prescribed bottom row has been established. In view of the various objects that are known to be equinumerous with ASMs (and thus monotone triangles with bottom row 
$1,2,\ldots,n$), an obvious question is whether we can carry over the $n+3$ parameters to these other objects. The starting point for the study presented in this paper 
was to derive such a result for VSASMs and their symmetric counterparts related to DPPs. However, it also resulted in the discovery of the expansion of the specialized generating function into symplectic characters.

The above mentioned $(n+3)$-parameter refinement of the monotone triangle count can be expressed in terms of an antisymmetrizer formula, see \eqref{eq:antisymformula}. In a sense, this generating function can be seen as a variation 
of \emph{Schur polynomials} when Schur polynomials are thought of as the generating function of 
\emph{Gelfand-Tsetlin patterns} and the variation concerns certain \emph{decorated Gelfand--Tsetlin patterns} defined in Definition~\ref{AMT}. Up to a
factor, this generating function is in fact a generalization of the \emph{Hall--Littlewood polynomials}.
The case of VSASMs corresponds to having $0,2,\ldots,2n-2$ as bottom row of the decorated Gelfand--Tsetlin patterns. In this particular case, we can use Lemma~\ref{general} to transform the \emph{antisymmetrizer formula} into a \emph{bialternant-type formula} (in the spirit of the bialternant formula for the Schur polynomial, but more complicated), which is a quotient of a determinant and a Vandermonde-type product. The same lemma was also applicable in the case with no symmetry, however, in the symmetric case, a key observation was the necessity to multiply with the product in \eqref{Bfactor}. Comparing again with the Schur polynomial case, we transform the bialternant-type formula into a \emph{Jacobi--Trudi-type formula} in the next step. Here, Lemma~\ref{lem:det} is an important tool. There are several possibilities 
to accomplish this, which leads to our different results. The situation is unexpectedly different compared to the situation for ASMs with no symmetry imposed.
The various versions of Jacobi--Trudi-type determinants are then interpreted combinatorially using the \emph{Lindstr\"om--Gessel--Viennot lemma} (Lemma~\ref{LGV}) as families of lattice paths. Such interpretations are still signed enumerations in our cases, but we use further algebraic manipulations or combinatorial reasoning (we have two different proofs) to obtain a signless version, which we express in terms of pairs of plane partitions. Combinatorial reasoning is also used to relate one of our $(n+3)$-parameter refinements to the holey cyclically and vertically symmetric lozenge tilings that are known to be equinumerous with $(2n+1) \times (2n+1)$ VSASMs. Establishing this relation is considerably more complicated compared to the situation for ASMs with no symmetry imposed.

\section{Main results}
\label{sec:MainResults}

A \emph{monotone triangle} is a triangular array of integers of the following form 
\[
\begin{array}{ccccccccccc}
    &   &   &   &   & m_{1,1} &   &   &   &   & \\
    &   &   &   & m_{2,1} &   & m_{2,2} &   &   &   & \\
    &   &   & \dots &   & \dots &   & \dots &   &   & \\
    &   & m_{n-2,1} &   & \dots &   & \dots &   & m_{n-2,n-2} &   & \\
    & m_{n-1,1} &   & m_{n-1,2} &  &   \dots &   & \dots   &  & m_{n-1,n-1}  & \\
    m_{n,1} &   & m_{n,2} &   & m_{n,3} &   & \dots &   & \dots &   & m_{n,n}
\end{array}
\]
with weak increase along $\nearrow$- and $\searrow$-diagonals, and strict increase along rows, that is, $m_{i+1,j} \le m_{i,j} \le m_{i+1,j+1}$ and 
$m_{i,j} < m_{i,j+1}$. There is the following simple bijection between monotone triangles with bottom row $1,2,\ldots,n$ and $n \times n$ ASMs: Let $A=(a_{i,j})_{1 \le i,j \le n}$ be an $n \times n$ ASM and consider the matrix obtained by adding to each entry all the entries that are in the same column above, that is, $S=(\sum_{i'=1}^{i} a_{i',j})_{1 \le i, j \le n}$. It is not hard to see that this always results in a $\{0,1\}$-matrix with $i$ occurrences of $1$'s in the $i$-th row. The corresponding {monotone triangle} is obtained by recording row by row the columns of the $1$'s in $S$ in $n$ centered rows. To give an example, we apply this to the following matrix, which is obtained from the matrix in the introduction by rotating. (Note that the matrix in the introduction exhibits a vertical symmetry.) 

\[\resizebox{\linewidth}{!}{$%
\begin{pmatrix}
0 & 0 & 0 & 0 & 1 & 0 & 0 \\
0 & 0 & 1 &0 & -1 & 1 & 0 \\
0 & 1& -1 & 1 & 0 &0 & 0 \\
1 &-1 & 1 &-1 & 1 &-1 & 1 \\
0 & 1 & -1& 1&0 & 0 & 0 \\
0 & 0 & 1 &0&-1 & 1 & 0 \\
0 & 0 & 0 & 0 & 1 & 0 & 0
 \end{pmatrix} 
\longleftrightarrow 
\begin{pmatrix}
0 & 0 & 0 & 0 & 1 & 0 & 0 \\
0 & 0 & 1 &0 & 0 & 1 & 0 \\
0 & 1& 0 & 1 & 0 &1 & 0 \\
1 &0 & 1 &0 & 1 &0 & 1 \\
1 & 1 & 0& 1&1 & 0 & 1 \\
1 & 1 & 1 &1&0 & 1 & 1 \\
1 & 1 & 1 & 1 & 1 & 1 & 1
 \end{pmatrix} 
 \longleftrightarrow 
 \begin{array}{ccccccccccccc}
   &   &   &   &   &   & 5 &   &   &   &   &   &  \\
   &   &   &   &   & 3 &   & 6 &   &   &   &   &  \\
   &   &   &   & 2 &   & 4 &   & 6 &   &   &   &  \\
   &   &   & 1 &   & 3 &   & 5 &   & 7 &   &   &  \\
   &   & 1 &   & 2 &   & 4 &   & 5 &   & 7 &   &  \\
   & 1 &   & 2 &   & 3 &   & 4 &   & 6 &   & 7 &   \\
1  &   & 2 &   & 3 &   & 4 &   & 5 &   & 6 &   & 7 
\end{array}%
$}\]
An observation that will be useful later is the following: By rotating and considering only the top $n$ rows of the monotone triangles (which 
take care of the ``fundamental domain''), $(2n+1) \times (2n+1)$ VSASMs correspond to monotone triangles with bottom row 
$2,4,6,\ldots,2n$, or, equivalently, when subtracting $2$ from each entry, with bottom row 
$0,2,\ldots,2n-2$. In order to see this, it is crucial to observe that the central 
column of a VSASM is always $(1,-1,1,-1,\ldots,-1,1)^T$, which is not very difficult.

The following decorated versions of monotone triangles have first appeared in \cite{nASMDPP}:
\begin{definition} 
\label{AMT}
An \emph{arrowed monotone triangle} (AMT) is a monotone triangle where each entry $e$ carries a decoration from $\{\nwarrow, \nearrow, \nenwarrow \}$ such that the following two conditions are satisfied: 
\begin{itemize} 
\item If $e$ has a $\nwarrow$-neighbor and is equal to it, then $e$ must carry  $\nearrow$.
\item If $e$ has a $\nearrow$-neighbor and is equal to it, then $e$ must carry $\nwarrow$. 
\end{itemize}
In summary, an arrow indicates that the entry is different from the next entry in the specified direction.

We assign the following weight to an arrowed monotone triangle with $n$ rows 
\begin{equation*}
u^{\# \nearrow} v^{\# \nwarrow} w^{\# \nenwarrow} \prod_{i=1}^n X_i^{\text{(sum of entries in row $i$)}-\text{(sum of entries in row $i-1$)}+(\# \nearrow \text{ in row $i$}) - (\# \nwarrow \text{ in row $i$})},
\end{equation*}
where the sum of entries in row $0$ is defined to be $0$.
\end{definition}

In our examples, we write $^{\nwarrow} e, e^{\nearrow}, ^{\nwarrow}e^{\nearrow}$ if entry $e$ is decorated with 
$\nwarrow, \nearrow, \nenwarrow$, respectively.
Such an example is provided next:
\[
\begin{array}{ccccccccccccccccc}
  &   &   &   &   &   &   &   & ^{\nwarrow}4^{\nearrow} &   &   &   &   &   &   &   & \\
  &   &   &   &   &   &   & ^{\nwarrow}4 &   & ^{\nwarrow}5 &   &   &   &   &   &   & \\
  &   &   &   &   &   & ^{\nwarrow}3^{\nearrow} &   & 4^{\nearrow} &   & {6}^{\nearrow} &   &   &   &   &   & \\
  &   &   &   &   & 2^{\nearrow} &   & 3^{\nearrow} &   & ^{\nwarrow}{6} &   & ^{\nwarrow}8^{\nearrow} &   &   &   &   & \\
  &   &   &   & ^{\nwarrow}1 &   & 2^{\nearrow} &  &   ^{\nwarrow}5^{\nearrow} &   & 6^{\nearrow}   &  & ^{\nwarrow}9  &   &   &   & \\
  &   &   & ^{\nwarrow}0 &   & ^{\nwarrow}2 &   & ^{\nwarrow}4^{\nearrow} &   & ^{\nwarrow}6  &   & ^{\nwarrow}8^{\nearrow} &   & 10^{\nearrow} &   &   &
\end{array}
\]
Its weight is 
\begin{equation*}
u^7 v^8 w^6 X_1^{4} X_2^{3} X_3^{6} X_4^{7} X_5^{4} X_6^{5}.
\end{equation*}

In the case $n=2$, there are three undecorated monotone triangles with bottom row $(0,2)$, which amount to a total of $45$ such monotone triangles with decorations. We list these arrowed monotone triangles, where $^{*} e^{*}$ means that the entry~$e$ can be decorated with any of the arrows $\nwarrow, \nearrow, \nenwarrow$: \begin{equation*}
	\begin{array}{ccc}
		&   ^{*}0^{*}   & \\
		^{\nwarrow}0 &   & ^{*}2^{*} \\
	\end{array}\qquad\quad\begin{array}{ccc}
	&   ^{*}1^{*}   & \\
	^{*}0^{*} &   & ^{*}2^{*} \\
	\end{array}\qquad\quad\begin{array}{ccc}
	&   ^{*}2^{*}   & \\
	^{*}0^{*} &   & 2^{\nearrow} \\
	\end{array}
\end{equation*}
The generating function is 
\begin{multline} 
\label{case2}
v X_2 (u X_1 + v X_1^{-1} + w) (u X_2 + v X_2^{-1} + w) + X_1 X_2 (u X_1 + v X_1^{-1} + w) (u X_2 + v X_2^{-1} + w) ^2 \\ + u X_1^2  X_2 (u X_1 + v X_1^{-1} + w) (u X_2 + v X_2^{-1} + w).
\end{multline} 

The significance of arrowed monotone triangles originally stems from the following specialization: Setting $u=v=1$, $w=-1$ and $X_i=1$ for $i=1,\ldots,n$, the generating function of arrowed monotone triangles with bottom row $k_1,\ldots,k_n$ is the number of monotone triangles with bottom row $k_1,\ldots,k_n$. Indeed, if we want to decorate a given monotone triangle with elements from 
$\{\nwarrow, \nearrow, \nenwarrow \}$ in an eligible way, the arrows are actually prescribed for all entries except for those that 
are different from their $\nwarrow$- and $\nearrow$-neighbors (if they exist). In this case, we are free to choose any decoration. We say that such an entry is \emph{free}. Thus, suppose $f$ is the number of free entries 
for a given monotone triangle, then there are $3^f$ associated arrowed monotone triangles. Setting $X_i=1$ for $i=1,\ldots,n$ in the generating function, the contribution of these $3^f$ arrowed monotone triangles in the generating function is up to a monomial in $u$ and $v$ (coming from the entries that are not free) equal to 
$(u+v+w)^f$, which evaluates to $1$ if $u=v=1$ and $w=-1$. 

We now examine how the generating function of AMTs relates to three well-known statistics on VSASMs: the number of $-1$s, the inversion number and the complementary inversion number. Let $A=(a_{i,j})_{1 \le i,j \le 2n+1}$ be a $(2n+1) \times (2n+1)$ VSASM. We denote by $\mathcal{N}(A)$ the number of $-1$s in the $n$ leftmost columns of $A$ and define
\begin{equation*}
	\inv(A) \coloneq \sum_{\substack{1 \le i < k \le 2n+1\\1 \le l \le j \le n}} a_{i,j} a_{k,l} \quad\text{and}\quad \inv'(A) \coloneq \sum_{\substack{1 \le i < k \le 2n+1\\1 \le j \le l \le n}} a_{i,j} a_{k,l}
\end{equation*}
to be the \emph{inversion number} and the \emph{complementary inversion number}, respectively.
The inversion number of ordinary ASMs was first introduced in \cite{lambda} as a generalization of the inversion number of permutation matrices. We define the weight of $A$ to be
\begin{equation*}
	(u+v+w)^{\mathcal{N}(A)} u^{\inv(A)} v^{\inv'(A)}.
\end{equation*}
It follows from an argument in \cite{TSPP} that the generating function of arrowed monotone triangles with bottom row $0,2,\ldots,2n-2$ after setting $X_i=1$ for $i=1,\dots,n$ equals
\begin{equation*}
(u+v+w)^n \sum_{A} (u+v+w)^{\mathcal{N}(A)} u^{\inv(A)} v^{\inv'(A)},
\end{equation*}
where we sum over all $(2n+1) \times (2n+1)$ VSASMs.

\bigskip

Before we present our first main result, we need two definitions.
A \emph{column-strict plane partition} (\CSPP) is a filling of a Young diagram with positive integers that weakly decrease along rows and strictly decrease down columns, whereas a  \emph{row-strict plane partition} (\RSPP) is a filling of a Young diagram with positive integers that weakly decrease down columns and strictly along rows.  Next we display a \CSPP (left) and an \RSPP (right):

\[
\begin{ytableau}
9 & 7 & 7 & 5 \\
8 & 6 & 6 & 4 \\
6 & 5 & 3 \\
4 & 4  \\
3 & 2 \\
2
\end{ytableau} 
\hspace{2cm}
\begin{ytableau}
 6 & 5 & 4 & 2 \\
 5  & 4   & 3   &  1 \\
 4  & 2   & 1 \\
 3  & 1 \\
 2 & 1 \\
 1
\end{ytableau} 
\]

\begin{theorem} 
\label{interpret4} 
For $n \ge 1$, the generating function of arrowed monotone triangles with bottom row 
$0,2,\ldots,2n-2$ is equal to the generating function of pairs $(P,Q)$ of plane partitions of the same shape with $n$ rows (allowing also rows of length zero) 
such that $P$ is a 
\CSPP and $Q$ is an \RSPP, and
the entries of $P$ in the $i$-th row from the bottom are no greater than $2i$, while the entries of $Q$ in the $i$-th row from the bottom are no greater than $i$. The weight of such a pair is given by the following monomial:
\begin{equation*}
w^{\binom{n+1}{2}-\# \text{ of entries in $P$}} \prod_{i=1}^{n} X_i^{n-1} (u X_i)^{\# \text{ of $2i-1$ in $P$}} (v X_i^{-1})^{\# \text{ of $2i$ in $P$}}.
\end{equation*}
\end{theorem}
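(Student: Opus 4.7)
The plan is to derive Theorem~\ref{interpret4} as a signless reformulation of the lattice-path interpretation given in Theorem~\ref{interpret3}. The key observation is that the step-weights of Theorem~\ref{interpret3} already match the weights of Theorem~\ref{interpret4}: in the upper region $y \geq 1$, a horizontal step at height $2i-1$ carries $u X_i$ and one at height $2i$ carries $v X_i^{-1}$, exactly matching the weights of an entry $2i-1$ (resp.\ $2i$) of the CSPP~$P$; and in the lower region, each vertical $(0,-1)$ step carries $w$, which will accumulate into the factor $w^{\binom{n+1}{2}-|Q|}$. I will split each non-intersecting family of paths at the line $y=1$, read off its upper piece as a CSPP~$P$ and its lower piece as an RSPP~$Q$, and then reconcile the signed contributions coming from the two-fold starting-point choices.

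For the upper half I apply the standard correspondence between non-intersecting lattice paths with step set $\{(1,0),(0,-1)\}$ and column-strict plane partitions: for each path, record the heights at which its horizontal steps occur and assemble these into $P$ row by row. The strict-columns / weak-rows property of $P$ follows from non-intersection, and the bound that row $i$ from the bottom has entries $\leq 2i$ is forced by the geometric range of heights that the relevant path can visit between $y=2n$ and $y=1$. Dually, for the lower half with step set $\{(-1,-1),(0,-1)\}$, the positions where a path chooses a diagonal $(-1,-1)$ step rather than a vertical $(0,-1)$ step record the entries of the RSPP~$Q$; its strict-rows / weak-columns property again comes from non-intersection, and the bound $\leq i$ in row $i$ from the bottom is enforced by the endpoints $E_j = (j,-j+1)$. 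The shapes of $P$ and $Q$ must agree because both are read off from the same crossing data of the $n$ paths at $y=1$.

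The main obstacle will be handling the $2^{n-1}$ choices of starting points in $\prod_{i=1}^n A_i$, each right-hand choice contributing the signed factor $(-uv)^{i-1}$. I expect to construct a sign-reversing involution on families that use at least one rightward start: given such a family, perform a canonical local modification (for instance, a reflection across the vertical axis of symmetry of the starting strip, together with a compensating edit of the lower-region diagonal/vertical step profile) that flips exactly one $(-uv)$ sign. The fixed points of this involution are precisely the all-leftward families, which by the previous paragraph correspond bijectively to the pairs $(P,Q)$ described in the theorem. Verifying that the total weight matches exactly — including the normalization $\prod_{i=1}^n X_i^{n-1}$ and the $\binom{n+1}{2}$ offset in the exponent of $w$, which equals the total number of lower-region steps in an all-leftward family — then completes the combinatorial proof. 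An alternative, algebraic route would instead expand the Jacobi--Trudi determinant behind Theorem~\ref{interpret3} directly as a manifestly positive sum over shapes $\lambda \subseteq (n,n-1,\ldots,1)$ of products of restricted tableau enumerators, yielding the pair $(P,Q)$ generating function without any involution.
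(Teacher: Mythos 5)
Your reduction of Theorem~\ref{interpret4} to Theorem~\ref{interpret3} has a genuine gap at its central step: the claimed sign-reversing involution whose fixed points are the all-leftward families cannot exist. If every starting point is chosen to be $(n-i+1,2n)$, then all $n$ paths begin at height $2n$, so each path may use horizontal steps at any height up to $2n$; consequently the rows of the resulting array $P$ are bounded only by $2n$, not by $2i$ --- the bound you assert is ``forced by the geometric range of heights'' is in fact the same for every path. Algebraically, the all-leftward families are counted (after reversing the rows) by $\det\bigl(\sum_k\binom{j}{k-j}w^{2j-k}h_{k-i}\bigr)$ with \emph{all} $2n$ variables $uX_1,\ldots,vX_n^{-1}$ appearing in every entry, whereas the target \eqref{jacobitrudi4} has $h_{k-i}(uX_1,vX_1^{-1},\ldots,uX_i,vX_i^{-1})$; these determinants already differ for $n=2$. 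Hence the signed contributions of the families using at least one rightward start do \emph{not} cancel among themselves, and no sign-reversing, weight-preserving involution can have exactly the all-leftward families as its fixed points.

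What the paper actually does at this point is algebraic rather than involutive: it multiplies the Jacobi--Trudi matrix of \eqref{jacobitrudi3} on the left by the matrix \eqref{eq:onehalfdet} built from elementary symmetric functions (of determinant $\tfrac12$), and proves a lemma showing that this simultaneously cancels the $(uv)^{l-1}h_{m-l+1}$ terms and restricts the variable sets of the surviving $h$'s row by row. This converts the two-element starting sets $A_i$ into the single starting points $(i,2i)$ of \eqref{jacobitrudi4}, and it is precisely this lowering of the starting heights that produces the constraint that row $i$ of $P$ has entries at most $2i$. Your reading of the upper region as a CSPP and of the lower region as an RSPP is fine in spirit (though $Q$ is obtained only after subtracting $i-1$ from row $i$ and $j-1$ from column $j$ of the column-strict shifted array of diagonal distances, a normalization you omit), but without the determinant transformation neither the sign cancellation nor the row bounds is established. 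The paper's second, fully combinatorial proof of Theorem~\ref{interpret4} does use a sign-reversing involution of roughly the flavor you describe, but it starts from Theorem~\ref{interpret1}, not Theorem~\ref{interpret3}, and the cancellation there is organized around forbidden step patterns rather than around the choice of starting points.
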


Letting $n=6$, the example above is a pair that has the properties described in Theorem~\ref{interpret4}. The weight is 
\begin{multline*}
w^{\binom{7}{2} - 16} X_1^5 X_2^5 X_3^5 X_4^5 X_5^5 X_6^5 (u X_1)^0 (u X_2)^2 (u X_3)^2 (u X_4)^2 (u X_5)^1 (u X_6)^0 \\
\times (v X_1^{-1})^2 (v X_2^{-1})^3 (v X_3^{-1})^3 (v X_4^{-1})^1 (v X_5^{-1})^0 
(v X_6^{-1})^0,
\end{multline*}
which is equal to $w^{5} u^{7} v^{9} X_1^3 X_2^4 X_3^4 X_4^6 X_5^6 X_6^5$. Next we list all pairs for the case $n=2$ along with their weights, which indeed add up to \eqref{case2}.

\ytableausetup{smalltableaux}

\begin{center}
	$\begin{array}{ccccccccc}
			\left(\emptyset,\emptyset\right) & \left(\begin{ytableau}1 \end{ytableau},\begin{ytableau}1 \end{ytableau}\right) &
			\left(\begin{ytableau}2 \end{ytableau},\begin{ytableau}1 \end{ytableau}\right)&
			\left(\begin{ytableau}3 \end{ytableau},\begin{ytableau}1 \end{ytableau}\right)&
			\left(\begin{ytableau}4 \end{ytableau},\begin{ytableau}1 \end{ytableau}\right)&
			\left(\begin{ytableau}1 \end{ytableau},\begin{ytableau}2 \end{ytableau}\right)& 
			\left(\begin{ytableau}2 \end{ytableau},\begin{ytableau}2 \end{ytableau}\right)&
			\left(\begin{ytableau}3 \end{ytableau},\begin{ytableau}2 \end{ytableau}\right)&
			\left(\begin{ytableau}4 \end{ytableau},\begin{ytableau}2 \end{ytableau}\right)\\
		\scriptstyle w^3 X_1 X_2&  
		\scriptstyle w^2 u X_1^{2} X_2&   
		\scriptstyle w^2 v  X_2&   
		\scriptstyle w^2 u X_1 X_2^{2}&   
		\scriptstyle w^2 v X_1&   
		\scriptstyle w^2 u X_1^{2} X_2&   
		\scriptstyle w^2 v X_2&   
		\scriptstyle w^2 u X_1 X_2^{2}& 
		\scriptstyle w^2 v X_1
		\end{array}$
\end{center}

\begin{center}
	$\begin{array}{ccccccc}
			\left(\begin{ytableau} 1& 1 \end{ytableau},\begin{ytableau}2 & 1 \end{ytableau}\right)&
			\left(\begin{ytableau} 2& 1 \end{ytableau},\begin{ytableau}2 & 1 \end{ytableau}\right)&
			\left(\begin{ytableau} 3& 1 \end{ytableau},\begin{ytableau}2 & 1 \end{ytableau}\right)&
			\left(\begin{ytableau} 4& 1 \end{ytableau},\begin{ytableau}2 & 1 \end{ytableau}\right)&
			\left(\begin{ytableau} 2& 2 \end{ytableau},\begin{ytableau}2 & 1 \end{ytableau}\right)&
			\left(\begin{ytableau} 3& 2 \end{ytableau},\begin{ytableau}2 & 1 \end{ytableau}\right)&
			\left(\begin{ytableau} 4& 2 \end{ytableau},\begin{ytableau}2 & 1 \end{ytableau}\right)\\
		\scriptstyle w u^2 X_1^{3} X_2&
		\scriptstyle w u v X_1 X_2&   
		\scriptstyle w u^2 X_1^{2} X_2^{2}&   
		\scriptstyle w u v X_1^{2}&   
		\scriptstyle w v^2 X_1^{-1} X_2&   
		\scriptstyle w u v  X_2^{2}&
		\scriptstyle w v^2
		\end{array}$
\end{center}

\begin{center}
	$\begin{array}{cccccccc}
			\left(\begin{ytableau} 3& 3 \end{ytableau},\begin{ytableau}2 & 1 \end{ytableau}\right)&
			\left(\begin{ytableau} 4& 3 \end{ytableau},\begin{ytableau}2 & 1 \end{ytableau}\right)&
			\left(\begin{ytableau} 4& 4 \end{ytableau},\begin{ytableau}2 & 1 \end{ytableau}\right)&
			\left(\begin{ytableau} 2 \\ 1 \end{ytableau},\begin{ytableau}1 \\1 \end{ytableau}\right)&
			\left(\begin{ytableau} 3 \\ 1 \end{ytableau},\begin{ytableau}1 \\1 \end{ytableau}\right)&
			\left(\begin{ytableau} 4 \\ 1 \end{ytableau},\begin{ytableau}1 \\1 \end{ytableau}\right)&
			\left(\begin{ytableau} 3 \\ 2 \end{ytableau},\begin{ytableau}1 \\1 \end{ytableau}\right)&
			\left(\begin{ytableau} 4 \\ 2 \end{ytableau},\begin{ytableau}1 \\1 \end{ytableau}\right)\\
			\scriptstyle w u^2 X_1 X_2^{3}&
			\scriptstyle w u v X_1 X_2&   
			\scriptstyle w v^2 X_1 X_2^{-1}& 
			\scriptstyle w u v X_1 X_2&   
			\scriptstyle w u^2 X_1^{2} X_2^{2}&   
			\scriptstyle w u v X_1^{2}&
			\scriptstyle w u v X_2^{2}&   
			\scriptstyle w v^2
		\end{array}$
\end{center}

\begin{center}
	$\begin{array}{cccccccc}
			\left(\begin{ytableau} 2 \\ 1 \end{ytableau},\begin{ytableau}2 \\1 \end{ytableau}\right)&
			\left(\begin{ytableau} 3 \\ 1 \end{ytableau},\begin{ytableau}2 \\1 \end{ytableau}\right)&
			\left(\begin{ytableau} 4 \\ 1 \end{ytableau},\begin{ytableau}2 \\1 \end{ytableau}\right)&
			\left(\begin{ytableau} 3 \\ 2 \end{ytableau},\begin{ytableau}2 \\1 \end{ytableau}\right)&
			\left(\begin{ytableau} 4 \\ 2 \end{ytableau},\begin{ytableau}2 \\1 \end{ytableau}\right)& 
			\left(\begin{ytableau} 2&1 \\ 1 \end{ytableau},\begin{ytableau}2&1 \\1 \end{ytableau}\right)&
			\left(\begin{ytableau} 2&2 \\ 1 \end{ytableau},\begin{ytableau}2&1 \\1 \end{ytableau}\right)&
			\left(\begin{ytableau} 3&1 \\ 1 \end{ytableau},\begin{ytableau}2&1 \\1 \end{ytableau}\right)\\
			\scriptstyle w u v X_1 X_2&
			\scriptstyle w u^2 X_1^{2} X_2^{2}&
			\scriptstyle w u v X_1^{2}&   
			\scriptstyle w u v X_2^{2}&  
			\scriptstyle w v^2&
			\scriptstyle u^2 v X_1^{2} X_2&
			\scriptstyle u v^2  X_2&
			\scriptstyle u^3  X_1^{3} X_2^{2}
		\end{array}$
\end{center}

\begin{center}
	$\begin{array}{ccccccc}
			\left(\begin{ytableau} 3&2 \\ 1 \end{ytableau},\begin{ytableau}2&1 \\1 \end{ytableau}\right)&
			\left(\begin{ytableau} 3&3 \\ 1 \end{ytableau},\begin{ytableau}2&1 \\1 \end{ytableau}\right)&
			\left(\begin{ytableau} 3&1 \\ 2 \end{ytableau},\begin{ytableau}2&1 \\1 \end{ytableau}\right)&
			\left(\begin{ytableau} 3&2 \\ 2 \end{ytableau},\begin{ytableau}2&1 \\1 \end{ytableau}\right)&
			\left(\begin{ytableau} 3&3 \\ 2 \end{ytableau},\begin{ytableau}2&1 \\1 \end{ytableau}\right)&
			\left(\begin{ytableau} 4&1 \\ 1 \end{ytableau},\begin{ytableau}2&1 \\1 \end{ytableau}\right)&
			\left(\begin{ytableau} 4&2 \\ 1 \end{ytableau},\begin{ytableau}2&1 \\1 \end{ytableau}\right)\\
			\scriptstyle u^2 v X_1 X_2^{2}&
			\scriptstyle u^3  X_1^{2} X_2^{3}&
			\scriptstyle u^2 v X_1 X_2^{2}&
			\scriptstyle u v^2 X_1^{-1} X_2^{2}&
			\scriptstyle u^2 v X_2^{3}& 
			\scriptstyle u^2 v X_1^{3}&
			\scriptstyle u v^2 X_1
		\end{array}$
\end{center}

\begin{center}
	 $\begin{array}{cccccc}
	 		\left(\begin{ytableau} 4&3 \\ 1 \end{ytableau},\begin{ytableau}2&1 \\1 \end{ytableau}\right)&
	 		\left(\begin{ytableau} 4&4 \\ 1 \end{ytableau},\begin{ytableau}2&1 \\1 \end{ytableau}\right)&
			\left(\begin{ytableau} 4&1 \\ 2 \end{ytableau},\begin{ytableau}2&1 \\1 \end{ytableau}\right)&
			\left(\begin{ytableau} 4&2 \\ 2 \end{ytableau},\begin{ytableau}2&1 \\1 \end{ytableau}\right)&
			\left(\begin{ytableau} 4&3 \\ 2 \end{ytableau},\begin{ytableau}2&1 \\1 \end{ytableau}\right)&
			\left(\begin{ytableau} 4&4 \\ 2 \end{ytableau},\begin{ytableau}2&1 \\1 \end{ytableau}\right)\\
			\scriptstyle u^2 v X_1^{2} X_2&
			\scriptstyle u v^2 X_1^{2} X_2^{-1}&
			\scriptstyle u v^2 X_1&
			\scriptstyle v^3 X_1^{-1}& 
			\scriptstyle u v^2  X_2&
			\scriptstyle v^3 X_2^{-1}
		\end{array}$
\end{center}

\ytableausetup{nosmalltableaux}

It is an open problem to find a bijective proof of this theorem. In fact, there is some similarity with the \emph{Cauchy identity}, which may give a hint that there is a \emph{Robinson--Schensted--Knuth-type correspondence} behind this theorem. In the case of the classical Cauchy identity, one side of the identity has a combinatorial interpretation 
in terms of the generating function of pairs of semistandard tableaux of the same shape, which clearly corresponds  to the pairs of plane partitions 
$(P,Q)$ in Theorem~\ref{interpret4} in this analogy. The other side of the Cauchy identity is interpreted as the generating function of lexicographically ordered two-line arrays, and they generalize 
permutations. In Theorem~\ref{interpret4}, this ``other'' side is the generating function of symmetric ASMs, and ASMs themselves are also generalizations 
of permutations. Moreover, in view of this relation with the Cauchy identity, another open problem is the following: in Theorem~\ref{interpret4}, only the plane partition $P$ carries a multivariate weight. It is an open problem to find a generalization of Theorem~\ref{interpret4} that involves a multivariate 
weight also on $Q$, possibly on a second set of variables, say, $Y_1,\ldots,Y_n$. 

Theorem~\ref{interpret4} is analogous to \cite[Theorem 1.1]{TSPP}, where the latter is about ASMs with no symmetry imposed. Also for that theorem, 
the corresponding open problems just mentioned are unsolved. Next we phrase Theorem~\ref{interpret4} in a way that makes the relation to \cite[Theorem 1.1]{TSPP} more 
apparent. For this purpose, we show that
\RSPPs with at most $n$ rows (allowing also rows of length zero) such that the entries in the $i$-th row from the bottom are no greater than $i$
are in easy bijective correspondence with $(2n+2) \times (2n+2) \times (2n+2)$ {totally symmetric self-complementary plane partitions}: 
After replacing each part $p$ of the \RSPP
by  $n+1-p$ and conjugating, we obtain a semistandard Young tableau with entries in $\{1,2,\ldots,n\}$ 
such that the entries in column $i$ (counted from the left) are no smaller than $i$. Phrased differently, entries $i$ can only occur in the first $i$ columns. For the example preceding Theorem~\ref{interpret4}, we obtain 
\[
\begin{ytableau}
 1 & 2 & 3 & 4 & 5 & 6 \\
 2  & 3  & 5   &  6 & 6  \\
 3  & 4   & 6   \\
 5  & 6  
 \end{ytableau} 
\]
when choosing $n=6$.

Now we use the standard procedure to transform semistandard Young tableaux with entries in $\{1,2,\ldots,n\}$ into Gelfand--Tsetlin patterns with $n$ rows: 
To obtain the $i$-th row of the Gelfand--Tsetlin pattern, consider the shape in the semistandard Young tableaux constituted by the entries less than or 
equal to $i$, add $0$'s to the corresponding partition so that it is of length $i$ and write it in reverse order. In our running example, we obtain the following Gelfand--Tsetlin pattern:
\[
\begin{array}{ccccccccccccccccc}
  &   &   &   &   &   &   &   & 1 &   &   &   &   &   &   &   & \\
  &   &   &   &   &   &   & 1 &   & 2 &   &   &   &   &   &   & \\
  &   &   &   &   &   &1 &   & 2 &   & 3 &   &   &   &   &   & \\
  &   &   &   &   & 0 &   & 2 &   & 2 &   & 4 &   &   &   &   & \\
  &   &   &   & 0 &   & 1 &  &   2 &   & 3   &  & 5  &   &   &   & \\
  &   &   & 0 &   & 0 &   & 2 &   & 3 &   & 5 &   & 6 &   &   &
\end{array}
\]
As the entries less than or equal to $i$ are confined to the first $i$ columns, it follows that all parts in the partition that constitute the $i$-th row of the Gelfand--Tsetlin pattern are less than or equal to $i$. We obtain a Gelfand--Tsetlin pattern with $n$ rows, 
parts in $\{0,1,\ldots,n\}$ such that the entries in $i$-th $\nearrow$-diagonal are no greater $i$. Adding $1$ to each entry and prepending a 
$\nearrow$-diagonal of $1$'s on the left results in a \emph{Magog triangle} of order $n+1$ as defined in \cite{Zei96a,FischerGog}. In our example, we obtain 
\[
\begin{array}{ccccccccccccccccc}
  &   &   &   &   &   &   & 1  &  &   &   &   &   &   &   &   & \\
  &   &   &   &   &   & 1  &   & 2 &   &   &   &   &   &   &   & \\
  &   &   &   &   & 1  &   & 2 &   & 3 &   &   &   &   &   &   & \\
  &   &   &   & 1   &   &2 &   & 3 &   & 4 &   &   &   &   &   & \\
  &   &   & 1  &   & 1 &   & 3 &   & 3 &   & 5 &   &   &   &   & \\
  &   &1   &   & 1 &   & 2 &  &   3 &   & 4   &  & 6  &   &   &   & \\
  &1   &   & 1 &   & 1 &   & 3 &   & 4 &   & 6 &   & 7 &   &   &
\end{array}.
\]
Such Magog triangles are known to be in 
bijective correspondence with totally symmetric self-complementary plane partitions \cite{MilRobRum86} inside a $(2n+2) \times (2n+2) \times (2n+2)$ box. For a given totally symmetric self-complementary plane partitions~$T$, let $\lambda(T)$ be the shape of the corresponding \RSPP under the aforementioned bijection.

On the other hand, the \CSPPs with the constraints as they appear in the theorem are in easy bijective correspondence with {symplectic tableaux} as defined in \cite[Section 4]{KoiTer90}. The generating functions of symplectic tableaux of shape~$\lambda$ are known as the \emph{symplectic Schur polynomials $\sp_\lambda$}, which admit the following bialternant formula
	\begin{equation*}
		\sp_{\lambda} (X_1,\dots,X_n,X_1^{-1},\dots,X_n^{-1}) = \frac{\det_{1 \le i,j \le n} \left(X_i^{\lambda_j + n - j + 1} - (X_i^{-1})^{\lambda_j + n - j + 1} \right)}{\det_{1 \le i,j \le n} \left(X_i^{n - j + 1} - (X_i^{-1})^{n - j + 1} \right)}.
	\end{equation*}

Thus, Theorem~\ref{interpret4} 
can also be seen as to provide the expansion of the generating function of arrowed monotone triangles with bottom row $0,2,\ldots,2n-2$ into symplectic characters when setting $u=v=1$. This was conjectured by Florian Schreier-Aigner \cite{FloPrivate} independently.

\begin{cor}\label{cor:expansion}
	The generating function of AMTs with bottom row $(0,2,\dots,2n-2)$ is
	\[
	\prod_{i=1}^{n} X_i^{n-1} \sum_{T} w^{\binom{n+1}{2}-|\lambda(T)|} \sp_{\lambda(T)}(u X_1,\ldots, u X_n, v X_1^{-1},\ldots,v X_n^{-1}),
	\]
	where we sum over totally symmetric self-complementary plane partitions~$T$  inside a $(2n+2) \times (2n+2) \times (2n+2)$ box.
\end{cor}

\begin{figure}[htb]
	\centering
	\begin{tikzpicture}[scale=.45,baseline=(current bounding box.center)]
	
	\foreach \x in {-8,...,6}
	\foreach \y in {0,...,14}
		\coordinate (\x/\y) at ($ (60:\y) + (\x,0) $);
		
	\coordinate (Z) at (3,6.35);
	
	
	\draw (0/0) -- (6/0) -- (6/8) -- (0/14) -- (-8/14) -- (-8/8) -- cycle;
	
	\draw[fill=black] (0/6) -- (0/8) -- (-2/8) -- cycle;
	
	
	\draw[fill=gray] (3/0) -- (3/1) -- (2/2) -- (2/1) -- cycle;
	\draw[fill=gray] (2/2) -- (2/3) -- (1/4) -- (1/3) -- cycle;
	\draw[fill=gray] (1/4) -- (1/5) -- (0/6) -- (0/5) -- cycle;
	\draw[fill=gray] (-1/8) -- (-1/9) -- (-2/10) -- (-2/9) -- cycle;
	\draw[fill=gray] (-2/10) -- (-2/11) -- (-3/12) -- (-3/11) -- cycle;
	\draw[fill=gray] (-3/12) -- (-3/13) -- (-4/14) -- (-4/13) -- cycle;
	
	\draw[fill=gray] (-4/4) -- (-3/4) -- (-3/5) -- (-4/5) -- cycle;
	\draw[fill=gray] (-3/5) -- (-2/5) -- (-2/6) -- (-3/6) -- cycle;
	\draw[fill=gray] (-2/6) -- (-1/6) -- (-1/7) -- (-2/7) -- cycle;
	\draw[fill=gray] (0/8) -- (1/8) -- (1/9) -- (0/9) -- cycle;
	\draw[fill=gray] (1/9) -- (2/9) -- (2/10) -- (1/10) -- cycle;
	\draw[fill=gray] (2/10) -- (3/10) -- (3/11) -- (2/11) -- cycle;
	
	\draw[fill=gray] (-8/11) -- (-7/11) -- (-6/10) -- (-7/10) -- cycle;
	\draw[fill=gray] (-6/10) -- (-5/10) -- (-4/9) -- (-5/9) -- cycle;
	\draw[fill=gray] (-4/9) -- (-3/9) -- (-2/8) -- (-3/8) -- cycle;
	\draw[fill=gray] (0/7) -- (1/7) -- (2/6) -- (1/6) -- cycle;
	\draw[fill=gray] (2/6) -- (3/6) -- (4/5) -- (3/5) -- cycle;
	\draw[fill=gray] (4/5) -- (5/5) -- (6/4) -- (5/4) -- cycle;
	
	
	\draw[fill=gray] (2/0) -- (3/0) -- (2/1) -- (1/1) -- cycle;
	\draw[fill=gray] (1/0) -- (2/0) -- (1/1) -- (0/1) -- cycle;
	\draw[fill=gray] (0/0) -- (1/0) -- (0/1) -- (-1/1) -- cycle;
	\draw[fill=gray] (-1/1) -- (0/1) -- (-1/2) -- (-2/2) -- cycle;
	\draw[fill=gray] (-2/2) -- (-1/2) -- (-2/3) -- (-3/3) -- cycle;
	\draw[fill=gray] (-3/3) -- (-2/3) -- (-3/4) -- (-4/4) -- cycle;
	
	\draw[fill=gray] (-4/4) -- (-4/5) -- (-5/6) -- (-5/5) -- cycle;
	\draw[fill=gray] (-5/5) -- (-5/6) -- (-6/7) -- (-6/6) -- cycle;
	\draw[fill=gray] (-6/6) -- (-6/7) -- (-7/8) -- (-7/7) -- cycle;
	\draw[fill=gray] (-7/7) -- (-7/8) -- (-8/9) -- (-8/8) -- cycle;
	\draw[fill=gray] (-7/8) -- (-7/9) -- (-8/10) -- (-8/9) -- cycle;
	\draw[fill=gray] (-7/9) -- (-7/10) -- (-8/11) -- (-8/10) -- cycle;
	
	\draw[fill=gray] (-8/11) -- (-7/11) -- (-7/12) -- (-8/12) -- cycle;
	\draw[fill=gray] (-8/12) -- (-7/12) -- (-7/13) -- (-8/13) -- cycle;
	\draw[fill=gray] (-8/13) -- (-7/13) -- (-7/14) -- (-8/14) -- cycle;
	\draw[fill=gray] (-7/13) -- (-6/13) -- (-6/14) -- (-7/14) -- cycle;
	\draw[fill=gray] (-6/13) -- (-5/13) -- (-5/14) -- (-6/14) -- cycle;
	\draw[fill=gray] (-5/13) -- (-4/13) -- (-4/14) -- (-5/14) -- cycle;
	
	\draw[fill=gray] (-3/13) -- (-2/13) -- (-3/14) -- (-4/14) -- cycle;
	\draw[fill=gray] (-2/13) -- (-1/13) -- (-2/14) -- (-3/14) -- cycle;
	\draw[fill=gray] (-1/13) -- (0/13) -- (-1/14) -- (-2/14) -- cycle;
	\draw[fill=gray] (0/13) -- (1/13) -- (0/14) -- (-1/14) -- cycle;
	\draw[fill=gray] (1/12) -- (2/12) -- (1/13) -- (0/13) -- cycle;
	\draw[fill=gray] (2/11) -- (3/11) -- (2/12) -- (1/12) -- cycle;
	
	\draw[fill=gray] (4/9) -- (4/10) -- (3/11) -- (3/10) -- cycle;
	\draw[fill=gray] (5/8) -- (5/9) -- (4/10) -- (4/9) -- cycle;
	\draw[fill=gray] (6/7) -- (6/8) -- (5/9) -- (5/8) -- cycle;
	\draw[fill=gray] (6/6) -- (6/7) -- (5/8) -- (5/7) -- cycle;
	\draw[fill=gray] (6/5) -- (6/6) -- (5/7) -- (5/6) -- cycle;
	\draw[fill=gray] (6/4) -- (6/5) -- (5/6) -- (5/5) -- cycle;
	
	\draw[fill=gray] (5/3) -- (6/3) -- (6/4) -- (5/4) -- cycle;
	\draw[fill=gray] (5/2) -- (6/2) -- (6/3) -- (5/3) -- cycle;
	\draw[fill=gray] (5/1) -- (6/1) -- (6/2) -- (5/2) -- cycle;
	\draw[fill=gray] (5/0) -- (6/0) -- (6/1) -- (5/1) -- cycle;
	\draw[fill=gray] (4/0) -- (5/0) -- (5/1) -- (4/1) -- cycle;
	\draw[fill=gray] (3/0) -- (4/0) -- (4/1) -- (3/1) -- cycle;
	
	
	
	\draw ($ (60:8) + (-1,0) $) -- ($ (60:8) + (0,0) $);
	\draw ($ (60:9) + (-1,0) $) -- ($ (60:9) + (1,0) $);
	\draw ($ (60:10) + (-2,0) $) -- ($ (60:10) + (-1,0) $);
	\draw ($ (60:10) + (0,0) $) -- ($ (60:10) + (2,0) $);
	\draw ($ (60:11) + (-2,0) $) -- ($ (60:11) + (1,0) $);
	\draw ($ (60:12) + (-2,0) $) -- ($ (60:12) + (1,0) $);
	\draw ($ (60:13) + (-3,0) $) -- ($ (60:13) + (0,0) $);
	
	
	\draw ($ (60:10) + (-2,0) $) -- ($ (60:9) + (-1,0) $);
	\draw ($ (60:12) + (-3,0) $) -- ($ (60:11) + (-2,0) $);
	\draw ($ (60:10) + (-1,0) $) -- ($ (60:9) + (0,0) $);
	\draw ($ (60:13) + (-3,0) $) -- ($ (60:12) + (-2,0) $);
	\draw ($ (60:11) + (-1,0) $) -- ($ (60:10) + (0,0) $);
	\draw ($ (60:13) + (-2,0) $) -- ($ (60:12) + (-1,0) $);
	\draw ($ (60:11) + (0,0) $) -- ($ (60:10) + (1,0) $);
	\draw ($ (60:13) + (-1,0) $) -- ($ (60:12) + (0,0) $);
	\draw ($ (60:11) + (1,0) $) -- ($ (60:10) + (2,0) $);
	\draw ($ (60:13) + (0,0) $) -- ($ (60:11) + (2,0) $);
	
	
	\draw ($ (60:12) + (-3,0) $) -- ($ (60:13) + (-3,0) $);
	\draw ($ (60:10) + (-2,0) $) -- ($ (60:12) + (-2,0) $);
	\draw ($ (60:8) + (-1,0) $) -- ($ (60:9) + (-1,0) $);
	\draw ($ (60:10) + (-1,0) $) -- ($ (60:12) + (-1,0) $);
	\draw ($ (60:8) + (0,0) $) -- ($ (60:10) + (0,0) $);
	\draw ($ (60:11) + (0,0) $) -- ($ (60:12) + (0,0) $);
	\draw ($ (60:9) + (1,0) $) -- ($ (60:10) + (1,0) $);
	\draw ($ (60:11) + (1,0) $) -- ($ (60:12) + (1,0) $);
	\draw ($ (60:10) + (2,0) $) -- ($ (60:11) + (2,0) $);

	\begin{scope}[rotate around={120:(Z.center)}]
		
	
	\draw ($ (60:8) + (-1,0) $) -- ($ (60:8) + (0,0) $);
	\draw ($ (60:9) + (-1,0) $) -- ($ (60:9) + (1,0) $);
	\draw ($ (60:10) + (-2,0) $) -- ($ (60:10) + (-1,0) $);
	\draw ($ (60:10) + (0,0) $) -- ($ (60:10) + (2,0) $);
	\draw ($ (60:11) + (-2,0) $) -- ($ (60:11) + (1,0) $);
	\draw ($ (60:12) + (-2,0) $) -- ($ (60:12) + (1,0) $);
	\draw ($ (60:13) + (-3,0) $) -- ($ (60:13) + (0,0) $);
	
	
	\draw ($ (60:10) + (-2,0) $) -- ($ (60:9) + (-1,0) $);
	\draw ($ (60:12) + (-3,0) $) -- ($ (60:11) + (-2,0) $);
	\draw ($ (60:10) + (-1,0) $) -- ($ (60:9) + (0,0) $);
	\draw ($ (60:13) + (-3,0) $) -- ($ (60:12) + (-2,0) $);
	\draw ($ (60:11) + (-1,0) $) -- ($ (60:10) + (0,0) $);
	\draw ($ (60:13) + (-2,0) $) -- ($ (60:12) + (-1,0) $);
	\draw ($ (60:11) + (0,0) $) -- ($ (60:10) + (1,0) $);
	\draw ($ (60:13) + (-1,0) $) -- ($ (60:12) + (0,0) $);
	\draw ($ (60:11) + (1,0) $) -- ($ (60:10) + (2,0) $);
	\draw ($ (60:13) + (0,0) $) -- ($ (60:11) + (2,0) $);
	
	
	\draw ($ (60:12) + (-3,0) $) -- ($ (60:13) + (-3,0) $);
	\draw ($ (60:10) + (-2,0) $) -- ($ (60:12) + (-2,0) $);
	\draw ($ (60:8) + (-1,0) $) -- ($ (60:9) + (-1,0) $);
	\draw ($ (60:10) + (-1,0) $) -- ($ (60:12) + (-1,0) $);
	\draw ($ (60:8) + (0,0) $) -- ($ (60:10) + (0,0) $);
	\draw ($ (60:11) + (0,0) $) -- ($ (60:12) + (0,0) $);
	\draw ($ (60:9) + (1,0) $) -- ($ (60:10) + (1,0) $);
	\draw ($ (60:11) + (1,0) $) -- ($ (60:12) + (1,0) $);
	\draw ($ (60:10) + (2,0) $) -- ($ (60:11) + (2,0) $);
		
	\end{scope}

	\begin{scope}[rotate around={-120:(Z.center)}]
	
	
	\draw ($ (60:8) + (-1,0) $) -- ($ (60:8) + (0,0) $);
	\draw ($ (60:9) + (-1,0) $) -- ($ (60:9) + (1,0) $);
	\draw ($ (60:10) + (-2,0) $) -- ($ (60:10) + (-1,0) $);
	\draw ($ (60:10) + (0,0) $) -- ($ (60:10) + (2,0) $);
	\draw ($ (60:11) + (-2,0) $) -- ($ (60:11) + (1,0) $);
	\draw ($ (60:12) + (-2,0) $) -- ($ (60:12) + (1,0) $);
	\draw ($ (60:13) + (-3,0) $) -- ($ (60:13) + (0,0) $);
	
	
	\draw ($ (60:10) + (-2,0) $) -- ($ (60:9) + (-1,0) $);
	\draw ($ (60:12) + (-3,0) $) -- ($ (60:11) + (-2,0) $);
	\draw ($ (60:10) + (-1,0) $) -- ($ (60:9) + (0,0) $);
	\draw ($ (60:13) + (-3,0) $) -- ($ (60:12) + (-2,0) $);
	\draw ($ (60:11) + (-1,0) $) -- ($ (60:10) + (0,0) $);
	\draw ($ (60:13) + (-2,0) $) -- ($ (60:12) + (-1,0) $);
	\draw ($ (60:11) + (0,0) $) -- ($ (60:10) + (1,0) $);
	\draw ($ (60:13) + (-1,0) $) -- ($ (60:12) + (0,0) $);
	\draw ($ (60:11) + (1,0) $) -- ($ (60:10) + (2,0) $);
	\draw ($ (60:13) + (0,0) $) -- ($ (60:11) + (2,0) $);
	
	
	\draw ($ (60:12) + (-3,0) $) -- ($ (60:13) + (-3,0) $);
	\draw ($ (60:10) + (-2,0) $) -- ($ (60:12) + (-2,0) $);
	\draw ($ (60:8) + (-1,0) $) -- ($ (60:9) + (-1,0) $);
	\draw ($ (60:10) + (-1,0) $) -- ($ (60:12) + (-1,0) $);
	\draw ($ (60:8) + (0,0) $) -- ($ (60:10) + (0,0) $);
	\draw ($ (60:11) + (0,0) $) -- ($ (60:12) + (0,0) $);
	\draw ($ (60:9) + (1,0) $) -- ($ (60:10) + (1,0) $);
	\draw ($ (60:11) + (1,0) $) -- ($ (60:12) + (1,0) $);
	\draw ($ (60:10) + (2,0) $) -- ($ (60:11) + (2,0) $);
	
\end{scope}

	\begin{scope}[xscale=-1,shift={(-6,0)}]
	
	
	\draw ($ (60:8) + (-1,0) $) -- ($ (60:8) + (0,0) $);
	\draw ($ (60:9) + (-1,0) $) -- ($ (60:9) + (1,0) $);
	\draw ($ (60:10) + (-2,0) $) -- ($ (60:10) + (-1,0) $);
	\draw ($ (60:10) + (0,0) $) -- ($ (60:10) + (2,0) $);
	\draw ($ (60:11) + (-2,0) $) -- ($ (60:11) + (1,0) $);
	\draw ($ (60:12) + (-2,0) $) -- ($ (60:12) + (1,0) $);
	\draw ($ (60:13) + (-3,0) $) -- ($ (60:13) + (0,0) $);
	
	
	\draw ($ (60:10) + (-2,0) $) -- ($ (60:9) + (-1,0) $);
	\draw ($ (60:12) + (-3,0) $) -- ($ (60:11) + (-2,0) $);
	\draw ($ (60:10) + (-1,0) $) -- ($ (60:9) + (0,0) $);
	\draw ($ (60:13) + (-3,0) $) -- ($ (60:12) + (-2,0) $);
	\draw ($ (60:11) + (-1,0) $) -- ($ (60:10) + (0,0) $);
	\draw ($ (60:13) + (-2,0) $) -- ($ (60:12) + (-1,0) $);
	\draw ($ (60:11) + (0,0) $) -- ($ (60:10) + (1,0) $);
	\draw ($ (60:13) + (-1,0) $) -- ($ (60:12) + (0,0) $);
	\draw ($ (60:11) + (1,0) $) -- ($ (60:10) + (2,0) $);
	\draw ($ (60:13) + (0,0) $) -- ($ (60:11) + (2,0) $);
	
	
	\draw ($ (60:12) + (-3,0) $) -- ($ (60:13) + (-3,0) $);
	\draw ($ (60:10) + (-2,0) $) -- ($ (60:12) + (-2,0) $);
	\draw ($ (60:8) + (-1,0) $) -- ($ (60:9) + (-1,0) $);
	\draw ($ (60:10) + (-1,0) $) -- ($ (60:12) + (-1,0) $);
	\draw ($ (60:8) + (0,0) $) -- ($ (60:10) + (0,0) $);
	\draw ($ (60:11) + (0,0) $) -- ($ (60:12) + (0,0) $);
	\draw ($ (60:9) + (1,0) $) -- ($ (60:10) + (1,0) $);
	\draw ($ (60:11) + (1,0) $) -- ($ (60:12) + (1,0) $);
	\draw ($ (60:10) + (2,0) $) -- ($ (60:11) + (2,0) $);
	
\end{scope}

	\begin{scope}[xscale=-1,shift={(-6,0)},rotate around={120:(Z.center)}]
	
	
	\draw ($ (60:8) + (-1,0) $) -- ($ (60:8) + (0,0) $);
	\draw ($ (60:9) + (-1,0) $) -- ($ (60:9) + (1,0) $);
	\draw ($ (60:10) + (-2,0) $) -- ($ (60:10) + (-1,0) $);
	\draw ($ (60:10) + (0,0) $) -- ($ (60:10) + (2,0) $);
	\draw ($ (60:11) + (-2,0) $) -- ($ (60:11) + (1,0) $);
	\draw ($ (60:12) + (-2,0) $) -- ($ (60:12) + (1,0) $);
	\draw ($ (60:13) + (-3,0) $) -- ($ (60:13) + (0,0) $);
	
	
	\draw ($ (60:10) + (-2,0) $) -- ($ (60:9) + (-1,0) $);
	\draw ($ (60:12) + (-3,0) $) -- ($ (60:11) + (-2,0) $);
	\draw ($ (60:10) + (-1,0) $) -- ($ (60:9) + (0,0) $);
	\draw ($ (60:13) + (-3,0) $) -- ($ (60:12) + (-2,0) $);
	\draw ($ (60:11) + (-1,0) $) -- ($ (60:10) + (0,0) $);
	\draw ($ (60:13) + (-2,0) $) -- ($ (60:12) + (-1,0) $);
	\draw ($ (60:11) + (0,0) $) -- ($ (60:10) + (1,0) $);
	\draw ($ (60:13) + (-1,0) $) -- ($ (60:12) + (0,0) $);
	\draw ($ (60:11) + (1,0) $) -- ($ (60:10) + (2,0) $);
	\draw ($ (60:13) + (0,0) $) -- ($ (60:11) + (2,0) $);
	
	
	\draw ($ (60:12) + (-3,0) $) -- ($ (60:13) + (-3,0) $);
	\draw ($ (60:10) + (-2,0) $) -- ($ (60:12) + (-2,0) $);
	\draw ($ (60:8) + (-1,0) $) -- ($ (60:9) + (-1,0) $);
	\draw ($ (60:10) + (-1,0) $) -- ($ (60:12) + (-1,0) $);
	\draw ($ (60:8) + (0,0) $) -- ($ (60:10) + (0,0) $);
	\draw ($ (60:11) + (0,0) $) -- ($ (60:12) + (0,0) $);
	\draw ($ (60:9) + (1,0) $) -- ($ (60:10) + (1,0) $);
	\draw ($ (60:11) + (1,0) $) -- ($ (60:12) + (1,0) $);
	\draw ($ (60:10) + (2,0) $) -- ($ (60:11) + (2,0) $);
	
\end{scope}

	\begin{scope}[xscale=-1,shift={(-6,0)},rotate around={-120:(Z.center)}]
	
	
	\draw ($ (60:8) + (-1,0) $) -- ($ (60:8) + (0,0) $);
	\draw ($ (60:9) + (-1,0) $) -- ($ (60:9) + (1,0) $);
	\draw ($ (60:10) + (-2,0) $) -- ($ (60:10) + (-1,0) $);
	\draw ($ (60:10) + (0,0) $) -- ($ (60:10) + (2,0) $);
	\draw ($ (60:11) + (-2,0) $) -- ($ (60:11) + (1,0) $);
	\draw ($ (60:12) + (-2,0) $) -- ($ (60:12) + (1,0) $);
	\draw ($ (60:13) + (-3,0) $) -- ($ (60:13) + (0,0) $);
	
	
	\draw ($ (60:10) + (-2,0) $) -- ($ (60:9) + (-1,0) $);
	\draw ($ (60:12) + (-3,0) $) -- ($ (60:11) + (-2,0) $);
	\draw ($ (60:10) + (-1,0) $) -- ($ (60:9) + (0,0) $);
	\draw ($ (60:13) + (-3,0) $) -- ($ (60:12) + (-2,0) $);
	\draw ($ (60:11) + (-1,0) $) -- ($ (60:10) + (0,0) $);
	\draw ($ (60:13) + (-2,0) $) -- ($ (60:12) + (-1,0) $);
	\draw ($ (60:11) + (0,0) $) -- ($ (60:10) + (1,0) $);
	\draw ($ (60:13) + (-1,0) $) -- ($ (60:12) + (0,0) $);
	\draw ($ (60:11) + (1,0) $) -- ($ (60:10) + (2,0) $);
	\draw ($ (60:13) + (0,0) $) -- ($ (60:11) + (2,0) $);
	
	
	\draw ($ (60:12) + (-3,0) $) -- ($ (60:13) + (-3,0) $);
	\draw ($ (60:10) + (-2,0) $) -- ($ (60:12) + (-2,0) $);
	\draw ($ (60:8) + (-1,0) $) -- ($ (60:9) + (-1,0) $);
	\draw ($ (60:10) + (-1,0) $) -- ($ (60:12) + (-1,0) $);
	\draw ($ (60:8) + (0,0) $) -- ($ (60:10) + (0,0) $);
	\draw ($ (60:11) + (0,0) $) -- ($ (60:12) + (0,0) $);
	\draw ($ (60:9) + (1,0) $) -- ($ (60:10) + (1,0) $);
	\draw ($ (60:11) + (1,0) $) -- ($ (60:12) + (1,0) $);
	\draw ($ (60:10) + (2,0) $) -- ($ (60:11) + (2,0) $);
	
\end{scope}

\begin{scope}[bullet/.style={circle, fill, minimum size=4pt,               inner sep=0pt, outer sep=0pt},nodes=bullet]

	\node[bullet] at ($ (60:8.5) + (-1,0) $) {};
	\node[bullet] at ($ (60:8.5) + (0,0) $) {};
	
	\node[bullet] at ($ (60:10.5) + (-2,0) $) {};
	\node[bullet] at ($ (60:9.5) + (1,0) $) {};
	
	\node[bullet] at ($ (60:12.5) + (-3,0) $) {};
	\node[bullet] at ($ (60:10.5) + (2,0) $) {};
	
	\draw[ultra thick] ($ (60:8.5) + (-1,0) $) -- ($ (60:8.5) + (0,0) $);
	
	\draw[ultra thick] ($ (60:10.5) + (-2,0) $) -- ($ (60:10.5) + (-1,0) $) -- ($ (60:9.5) + (0,0) $) -- ($ (60:9.5) + (1,0) $);
	
	\draw[ultra thick] ($ (60:12.5) + (-3,0) $) -- ($ (60:11.5) + (-2,0) $) -- ($ (60:11.5) + (1,0) $) -- ($ (60:10.5) + (2,0) $);
	
\end{scope}
	
	\end{tikzpicture}
	$\longleftrightarrow$
	\begin{tikzpicture}[scale=.5,baseline=(current bounding box.center)]
	\draw [help lines,step=1cm,dashed] (-.75,-.75) grid (5.75,4.75);

	\fill (1,0) circle (5pt);
	\fill (3,1) circle (5pt);
	\fill (5,2) circle (5pt);
	
	\fill (0,0) circle (5pt);
	\fill (1,2) circle (5pt);
	\fill (2,4) circle (5pt);

	\node[below right] at (1,0) {\contour{white}{$A_1$}};
	\node[below right] at (3,1) {\contour{white}{$A_2$}};
	\node[below right] at (5,2) {\contour{white}{$A_3$}};
	
	\node[above left] at (0,0) {\contour{white}{$E_1$}};
	\node[above left] at (1,2) {\contour{white}{$E_2$}};
	\node[above left] at (2,4) {\contour{white}{$E_3$}};
	
	\draw[ultra thick] (0,0) -- (1,0);
	\draw[ultra thick] (1,2) -- (2,2) -- (2,1) -- (3,1);
	\draw[ultra thick] (2,4) -- (2,3) -- (5,3) -- (5,2);
	\end{tikzpicture}
	\caption{Cyclically and vertically symmetric lozenge tiling of a hexagon with a central triangular hole and the corresponding family of nonintersecting lattice paths. The gray tilings are forced due to the symmetry.}
	\label{fig:ExampleRhombusTiling}
\end{figure}

\bigskip

Our other main result concerns nonintersecting lattice paths objects that have the same generating function as arrowed monotone triangles with bottom row $0,2,\ldots,2n-2$ that can be related combinatorially to the lozenge tilings when specializing. Before we state the results, recall that  cyclically and vertically symmetric lozenge tilings of 
a hexagon with side lengths $2n+2,2n,2n+2,2n,2n+2,2n$ and a central triangular hole of size $2$ are in easy bijective correspondence with families of $n$ nonintersecting lattice paths. An example of such a lozenge tiling is provided in Figure~\ref{fig:ExampleRhombusTiling} (left). Due to the symmetries of the tilings, the holey hexagon can be decomposed into six equal parts, up to rotation and reflection. Figure~\ref{fig:ExampleRhombusTiling} (right) shows the family of nonintersecting lattice paths that corresponds to the fundamental area. At the end of Section~\ref{sec:first}, we provide more details and show how these families of nonintersecting lattice paths relate to the following theorem.

The theorem involves lattice paths in $\mathbb{Z}^2$, see Figure~\ref{fig:ExampleFirstInterpretation} for an example. We divide the lattice points of $\mathbb{Z}^2$ into \emph{even} and \emph{odd} points depending on whether the sum of the coordinates is even or odd, respectively.

\begin{figure}[htb]
	\centering
	\begin{tikzpicture}[scale=.45,baseline=(current bounding box.center)]
		\fill [verylight-gray] (-6.75,-4.75) rectangle (0,10.75);
		\fill [light-gray] (0,-4.75) rectangle (6.75,1);
		
		\draw [help lines,step=1cm,dashed] (-6.75,-4.75) grid (6.75,10.75);
		
		\draw[->,thick] (-6.75,0)--(6.75,0) node[right]{$x$};
		\draw[->,thick] (0,-4.75)--(0,10.75) node[above]{$y$};
		
		\fill (-1,1) circle (5pt);
		\fill (-2,2) circle (5pt);
		\fill (-3,3) circle (5pt);
		\fill (-4,4) circle (5pt);
		\fill (-5,5) circle (5pt);
		\fill (-6,6) circle (5pt);
		
		\fill (0,1) circle (5pt);
		\fill (1,0) circle (5pt);
		\fill (2,-1) circle (5pt);
		\fill (3,-2) circle (5pt);
		\fill (4,-3) circle (5pt);
		\fill (5,-4) circle (5pt);
		
		\path[decoration=arrows, decorate] (-1,1) --++ (1,0);
		
		\path[decoration=arrows, decorate] (-2,2) --++ (1,0) --++ (1,0) --++ (1,-1) --++ (0,-1);
		
		\draw[ultra thick,dashed,myred] (0,2) --++ (1,-1);
		
		\path[decoration=arrows, decorate] (-3,3) --++ (1,1) --++ (1,0) --++ (1,1) --++ (0,-2) --++ (1,-1) --++ (1,-1) --++ (0,-1) --++ (0,-1);
		
		\draw[ultra thick,dotted,mygreen] (0,5) --++ (0,-2) --++ (1,-1) --++ (1,-1);
		
		\path[decoration=arrows, decorate] (-4,4) --++ (1,1) --++ (1,1) --++ (1,0) --++ (1,1) --++ (1,-1) --++ (1,-1) --++ (1,-1) --++ (1,-1) --++ (1,-1) --++ (1,-1) --++ (0,-1) --++ (0,-1) --++ (0,-1) --++ (0,-1) --++ (0,-1) --++ (-1,0);
		
		\draw[ultra thick,dotted,mygreen] (0,7) --++ (1,-1) --++ (1,-1) --++ (1,-1) --++ (1,-1) --++ (1,-1) --++ (1,-1);
		
		\path[decoration=arrows, decorate] (-5,5) --++ (1,0) --++ (1,1) --++ (1,1) --++ (1,1) --++ (1,0) --++ (1,-1) --++ (0,-2) --++ (0,-2) --++ (1,-1) --++ (1,-1) --++ (0,-1) --++ (0,-1) --++ (0,-1);
		
		\draw[ultra thick,dashed,myred] (0,8) --++ (1,-1) --++ (0,-2) --++ (0,-2) --++ (1,-1) --++ (1,-1);
		
		\path[decoration=arrows, decorate] (-6,6) --++ (1,1) --++ (1,1) --++ (1,1) --++ (1,0) --++ (1,1) --++ (1,0) --++ (1,-1) --++ (1,-1) --++ (1,-1) --++ (0,-2) --++ (1,-1) --++ (0,-2) --++ (1,-1) --++ (0,-1) --++ (0,-1) --++ (-1,0) --++ (0,-1) --++ (0,-1);
		
		\draw[ultra thick,dashed,myred] (0,10) --++ (1,-1) --++ (1,-1) --++ (1,-1) --++ (0,-2) --++ (1,-1) --++ (0,-2) --++ (1,-1);
		
	\end{tikzpicture}
	\caption{Example of the lattice paths in Theorem~\ref{interpret1} for $n=6$. The associated permutation is $\sigma = (1\;2\;3\;6\;4\;5)$ and the weight is $(-u v)^5 w^9 \allowbreak X_1^5 X_2^5 X_3^5 X_4^5 X_5^5 X_6^5(u X_3+v X_3^{-1}) (u X_6 + v X_6^{-1})$. In the second region, we draw the even and odd paths in different colors.}
	\label{fig:ExampleFirstInterpretation}
\end{figure}

\begin{theorem} 
\label{interpret1} 
For $n \ge 1$, the generating function of arrowed monotone triangles with bottom row 
$0,2,\ldots,2n-2$ is equal to the signed generating function of $n$ lattice paths with starting points 
$(-1,1),(-2,2),\ldots,(-n,n)$ and end points $(0,1),(1,0),\ldots,(n-1,-n+2)$ and the following properties:
\begin{itemize} 
\item In the region $\{(x,y) \mid x \le 0\}$, the step set is $\{(1,1),(1,0)\}$, and steps of type $(1,0)$ are equipped with the weight $w$. 
\item In the region $\{(x,y) \mid  x \ge 0, y \ge 1\}$, the step set is $\{(1,-1),(0,-2)\}$, and steps of type $(0,-2)$  are equipped with the weight $-uv$. 
\item 
In the region $\{(x,y) \mid  x \ge 0, y \le 1\}$, the step set is $\{(-1,0),(0,-1)\}$. Horizontal steps with distance $d$ from the line $y=2$ are equipped with the weight $u X_d+v X_d^{-1}$. 
\end{itemize} 
The paths are nonintersecting in the first and in the third region. In the second region, we have two types of paths, 
even and odd, depending on whether they contain only even lattice points or only odd lattice points, respectively. Lattice paths of the same type are not intersecting each other, but an odd path may have an intersection with an even path. 

The weight of a family of lattice paths is $\prod_{i=1}^{n} X_i^{n-1}$ multiplied by the product of the weights of all its steps where the weight of a step is $1$ if it has not been specified.
Let $\sigma$ be the permutation so that the $i$-th starting point is connected to the $\sigma(i)$-th end point, then the sign of the family is $\sgn \sigma$. 
\end{theorem}

Next we list all families of lattice paths from Theorem~\ref{interpret1} for the case $n=2$ along with their weights up to the overall factor of $X_1 X_2$.
        
        \begin{center} 
        \begin{tikzpicture}[scale=.45,baseline=(current bounding box.center)]
		\fill [verylight-gray] (-2.75,-.75) rectangle (0,3.75);
		\fill [light-gray] (0,-.75) rectangle (1.75,1);
		
		\draw [help lines,step=1cm,dashed] (-2.75,-.75) grid (1.75,3.75);
		
		\draw[->,thick] (-2.75,0)--(1.75,0) node[right]{\tiny $x$};
		\draw[->,thick] (0,-.75)--(0,3.75) node[above]{\tiny$y$};
		
		\fill (-1,1) circle (5pt);
		\fill (-2,2) circle (5pt);
				
		\fill (0,1) circle (5pt);
		\fill (1,0) circle (5pt);

		\path[decoration=arrows, decorate] (-1,1) --++ (1,0);
		
		\path[decoration=arrows, decorate] (-2,2) --++ (1,0) --++ (1,0) --++ (1,-1) --++ (0,-1);
		
		\draw[ultra thick,dashed,myred] (0,2) --++ (1,-1);
		
		\draw (0,-.75) node[below]{\tiny $w^3$};
		
	\end{tikzpicture}
        \begin{tikzpicture}[scale=.45,baseline=(current bounding box.center)]
		\fill [verylight-gray] (-2.75,-.75) rectangle (0,3.75);
		\fill [light-gray] (0,-.75) rectangle (2.75,1);
		
		\draw [help lines,step=1cm,dashed] (-2.75,-.75) grid (2.75,3.75);
		
		\draw[->,thick] (-2.75,0)--(2.75,0) node[right]{\tiny$x$};
		\draw[->,thick] (0,-.75)--(0,3.75) node[above]{\tiny$y$};
		
		\fill (-1,1) circle (5pt);
		\fill (-2,2) circle (5pt);
				
		\fill (0,1) circle (5pt);
		\fill (1,0) circle (5pt);

		\path[decoration=arrows, decorate] (-1,1) --++ (1,0);
		
		\path[decoration=arrows, decorate] (-2,2) --++ (1,0) --++ (1,1) --++ (1,-1) --++  (1,-1) --++ (-1,0) --++ (0,-1);
		
		\draw[ultra thick,dotted,mygreen] (0,3) --++ (1,-1) --++ (1,-1);
		
		\draw (0,-.75) node[below]{\tiny$w^2(u X_1+v X_1^{-1})$};
		
	\end{tikzpicture}
	 \begin{tikzpicture}[scale=.45,baseline=(current bounding box.center)]
		\fill [verylight-gray] (-2.75,-.75) rectangle (0,3.75);
		\fill [light-gray] (0,-.75) rectangle (2.75,1);
		
		\draw [help lines,step=1cm,dashed] (-2.75,-.75) grid (2.75,3.75);
		
		\draw[->,thick] (-2.75,0)--(2.75,0) node[right]{\tiny$x$};
		\draw[->,thick] (0,-.75)--(0,3.75) node[above]{\tiny$y$};
		
		\fill (-1,1) circle (5pt);
		\fill (-2,2) circle (5pt);
				
		\fill (0,1) circle (5pt);
		\fill (1,0) circle (5pt);

		\path[decoration=arrows, decorate] (-1,1) --++ (1,0);
		
		\path[decoration=arrows, decorate] (-2,2) --++ (1,0) --++ (1,1) --++ (1,-1) --++  (1,-1) --++ (0,-1) --++ (-1,0);
		
		\draw[ultra thick,dotted,mygreen] (0,3) --++ (1,-1) --++ (1,-1);
		
		\draw (0,-.75) node[below]{\tiny$w^2(u X_2+v X_2^{-1})$};
		
	\end{tikzpicture}
	 \begin{tikzpicture}[scale=.45,baseline=(current bounding box.center)]
		\fill [verylight-gray] (-2.75,-.75) rectangle (0,3.75);
		\fill [light-gray] (0,-.75) rectangle (2.75,1);
		
		\draw [help lines,step=1cm,dashed] (-2.75,-.75) grid (2.75,3.75);
		
		\draw[->,thick] (-2.75,0)--(2.75,0) node[right]{\tiny$x$};
		\draw[->,thick] (0,-.75)--(0,3.75) node[above]{\tiny$y$};
		
		\fill (-1,1) circle (5pt);
		\fill (-2,2) circle (5pt);
				
		\fill (0,1) circle (5pt);
		\fill (1,0) circle (5pt);

		\path[decoration=arrows, decorate] (-1,1) --++ (1,0);
		
		\path[decoration=arrows, decorate] (-2,2) --++ (1,1) --++ (1,0) --++ (1,-1) --++  (1,-1) --++ (-1,0) --++ (0,-1);
		
		\draw[ultra thick,dotted,mygreen] (0,3) --++ (1,-1) --++ (1,-1);
		
		\draw (0,-.75) node[below]{\tiny$w^2(u X_1+v X_1^{-1})$};
		
	\end{tikzpicture}
	 \begin{tikzpicture}[scale=.45,baseline=(current bounding box.center)]
		\fill [verylight-gray] (-2.75,-.75) rectangle (0,3.75);
		\fill [light-gray] (0,-.75) rectangle (2.75,1);
		
		\draw [help lines,step=1cm,dashed] (-2.75,-.75) grid (2.75,3.75);
		
		\draw[->,thick] (-2.75,0)--(2.75,0) node[right]{\tiny$x$};
		\draw[->,thick] (0,-.75)--(0,3.75) node[above]{\tiny$y$};
		
		\fill (-1,1) circle (5pt);
		\fill (-2,2) circle (5pt);
				
		\fill (0,1) circle (5pt);
		\fill (1,0) circle (5pt);

		\path[decoration=arrows, decorate] (-1,1) --++ (1,0);
		
		\path[decoration=arrows, decorate] (-2,2) --++ (1,1) --++ (1,0) --++ (1,-1) --++  (1,-1) --++ (0,-1) --++ (-1,0);
		
		\draw[ultra thick,dotted,mygreen] (0,3) --++ (1,-1) --++ (1,-1);
		
		\draw (0,-.75) node[below]{\tiny$w^2(u X_2+v X_2^{-1})$};
		
	\end{tikzpicture}
	\end{center}
	
	 \begin{center} 
	 \begin{tikzpicture}[scale=.45,baseline=(current bounding box.center)]
		\fill [verylight-gray] (-2.75,-.75) rectangle (0,4.75);
		\fill [light-gray] (0,-.75) rectangle (1.75,1);
		
		\draw [help lines,step=1cm,dashed] (-2.75,-.75) grid (1.75,4.75);
		
		\draw[->,thick] (-2.75,0)--(1.75,0) node[right]{\tiny$x$};
		\draw[->,thick] (0,-.75)--(0,4.75) node[above]{\tiny$y$};
		
		\fill (-1,1) circle (5pt);
		\fill (-2,2) circle (5pt);
				
		\fill (0,1) circle (5pt);
		\fill (1,0) circle (5pt);

		\path[decoration=arrows, decorate] (-1,1) --++ (1,0);
		
		\path[decoration=arrows, decorate] (-2,2) --++ (1,1) --++ (1,1) --++ (0,-2) --++  (1,-1) --++ (0,-1);
		
		\draw[ultra thick,dashed,myred] (0,4) --++ (0,-2) --++ (1,-1);
		
		\draw (0,-.75) node[below]{\tiny$w (- uv)$};
		
	\end{tikzpicture}
	\begin{tikzpicture}[scale=.45,baseline=(current bounding box.center)]
		\fill [verylight-gray] (-2.75,-.75) rectangle (0,4.75);
		\fill [light-gray] (0,-.75) rectangle (1.75,1);
		
		\draw [help lines,step=1cm,dashed] (-2.75,-.75) grid (1.75,4.75);
		
		\draw[->,thick] (-2.75,0)--(1.75,0) node[right]{\tiny$x$};
		\draw[->,thick] (0,-.75)--(0,4.75) node[above]{\tiny$y$};
		
		\fill (-1,1) circle (5pt);
		\fill (-2,2) circle (5pt);
				
		\fill (0,1) circle (5pt);
		\fill (1,0) circle (5pt);

		\path[decoration=arrows, decorate] (-1,1) --++ (1,0);
		
		\path[decoration=arrows, decorate] (-2,2) --++ (1,1) --++ (1,1) --++ (1,-1) --++  (0,-2) --++ (0,-1);
		
		\draw[ultra thick,dashed,myred] (0,4) --++ (1,-1) --++ (0,-2);
		
		\draw (0,-.75) node[below]{\tiny$w (- uv)$};
		
	\end{tikzpicture}
	\begin{tikzpicture}[scale=.45,baseline=(current bounding box.center)]
		\fill [verylight-gray] (-2.75,-.75) rectangle (0,4.75);
		\fill [light-gray] (0,-.75) rectangle (3.75,1);
		
		\draw [help lines,step=1cm,dashed] (-2.75,-.75) grid (3.75,4.75);
		
		\draw[->,thick] (-2.75,0)--(3.75,0) node[right]{\tiny$x$};
		\draw[->,thick] (0,-.75)--(0,4.75) node[above]{\tiny$y$};
		
		\fill (-1,1) circle (5pt);
		\fill (-2,2) circle (5pt);
				
		\fill (0,1) circle (5pt);
		\fill (1,0) circle (5pt);

		\path[decoration=arrows, decorate] (-1,1) --++ (1,0);
		
		\path[decoration=arrows, decorate] (-2,2) --++ (1,1) --++ (1,1) --++ (1,-1) --++  (1,-1) --++  (1,-1) --++ (-1,0) --++ (-1,0) --++ (0,-1);
		
		\draw[ultra thick,dashed,myred] (0,4) --++ (1,-1) --++ (1,-1) --++ (1,-1);
		
		\draw (0.5,-.75) node[below]{\tiny$w (u X_1 + v X_1^{-1})^2$};
		
	\end{tikzpicture}
	\begin{tikzpicture}[scale=.45,baseline=(current bounding box.center)]
		\fill [verylight-gray] (-2.75,-.75) rectangle (0,4.75);
		\fill [light-gray] (0,-.75) rectangle (3.75,1);
		
		\draw [help lines,step=1cm,dashed] (-2.75,-.75) grid (3.75,4.75);
		
		\draw[->,thick] (-2.75,0)--(3.75,0) node[right]{\tiny$x$};
		\draw[->,thick] (0,-.75)--(0,4.75) node[above]{\tiny$y$};
		
		\fill (-1,1) circle (5pt);
		\fill (-2,2) circle (5pt);
				
		\fill (0,1) circle (5pt);
		\fill (1,0) circle (5pt);

		\path[decoration=arrows, decorate] (-1,1) --++ (1,0);
		
		\path[decoration=arrows, decorate] (-2,2) --++ (1,1) --++ (1,1) --++ (1,-1) --++  (1,-1) --++  (1,-1) --++ (-1,0) --++ (0,-1) --++ (-1,0);
		
		\draw[ultra thick,dashed,myred] (0,4) --++ (1,-1) --++ (1,-1) --++ (1,-1);
		
		\draw (0.5,-.75) node[below]{\tiny$w (u X_1 + v X_1^{-1})(u X_2 + v X_2^{-1})$};
		
	\end{tikzpicture}
	\begin{tikzpicture}[scale=.45,baseline=(current bounding box.center)]
		\fill [verylight-gray] (-2.75,-.75) rectangle (0,4.75);
		\fill [light-gray] (0,-.75) rectangle (3.75,1);
		
		\draw [help lines,step=1cm,dashed] (-2.75,-.75) grid (3.75,4.75);
		
		\draw[->,thick] (-2.75,0)--(3.75,0) node[right]{\tiny$x$};
		\draw[->,thick] (0,-.75)--(0,4.75) node[above]{\tiny$y$};
		
		\fill (-1,1) circle (5pt);
		\fill (-2,2) circle (5pt);
				
		\fill (0,1) circle (5pt);
		\fill (1,0) circle (5pt);

		\path[decoration=arrows, decorate] (-1,1) --++ (1,0);
		
		\path[decoration=arrows, decorate] (-2,2) --++ (1,1) --++ (1,1) --++ (1,-1) --++  (1,-1) --++  (1,-1) --++ (0,-1) --++ (-1,0) --++ (-1,0);
		
		\draw[ultra thick,dashed,myred] (0,4) --++ (1,-1) --++ (1,-1) --++ (1,-1);
		
		\draw (0.5,-.75) node[below]{\tiny$w(u X_2 + v X_2^{-1})^2$};
		
	\end{tikzpicture}
	\end{center} 
	
	\begin{center}
	\begin{tikzpicture}[scale=.45,baseline=(current bounding box.center)]
		\fill [verylight-gray] (-2.75,-.75) rectangle (0,4.75);
		\fill [light-gray] (0,-.75) rectangle (2.75,1);
		
		\draw [help lines,step=1cm,dashed] (-2.75,-.75) grid (2.75,4.75);
		
		\draw[->,thick] (-2.75,0)--(2.75,0) node[right]{\tiny$x$};
		\draw[->,thick] (0,-.75)--(0,4.75) node[above]{\tiny$y$};
		
		\fill (-1,1) circle (5pt);
		\fill (-2,2) circle (5pt);
				
		\fill (0,1) circle (5pt);
		\fill (1,0) circle (5pt);

		\path[decoration=arrows, decorate] (-1,1) --++ (1,1) --++ (1,-1) --++ (-1,0);
		
		\path[decoration=arrows, decorate] (-2,2) --++ (1,0) --++ (1,1) --++ (1,-1) --++ (1,-1) --++ (0,-1) --++ (-1,0);
		
		\draw[ultra thick,dotted,mygreen] (0,2) --++ (1,-1);
		
		\draw[ultra thick,dashed,myred] (0,3) --++ (1,-1) --++ (1,-1);
		
		\draw (0,-.75) node[below]{\tiny$w(u X_1 + v X_1^{-1})(u X_2 + v X_2^{-1})$};
		
	\end{tikzpicture}
	\begin{tikzpicture}[scale=.45,baseline=(current bounding box.center)]
		\fill [verylight-gray] (-2.75,-.75) rectangle (0,4.75);
		\fill [light-gray] (0,-.75) rectangle (2.75,1);
		
		\draw [help lines,step=1cm,dashed] (-2.75,-.75) grid (2.75,4.75);
		
		\draw[->,thick] (-2.75,0)--(2.75,0) node[right]{\tiny$x$};
		\draw[->,thick] (0,-.75)--(0,4.75) node[above]{\tiny$y$};
		
		\fill (-1,1) circle (5pt);
		\fill (-2,2) circle (5pt);
				
		\fill (0,1) circle (5pt);
		\fill (1,0) circle (5pt);

		\path[decoration=arrows, decorate] (-1,1) --++ (1,1) --++ (1,-1) --++ (-1,0);
		
		\path[decoration=arrows, decorate] (-2,2) --++ (1,1) --++ (1,0) --++ (1,-1) --++ (1,-1) --++ (0,-1) --++ (-1,0);
		
		\draw[ultra thick,dotted,mygreen] (0,2) --++ (1,-1);
		
		\draw[ultra thick,dashed,myred] (0,3) --++ (1,-1) --++ (1,-1);
		
		\draw (0,-.75) node[below]{\tiny$w(u X_1 + v X_1^{-1})(u X_2 + v X_2^{-1})$};
		
	\end{tikzpicture}
	\begin{tikzpicture}[scale=.45,baseline=(current bounding box.center)]
		\fill [verylight-gray] (-2.75,-.75) rectangle (0,4.75);
		\fill [light-gray] (0,-.75) rectangle (3.75,1);
		
		\draw [help lines,step=1cm,dashed] (-2.75,-.75) grid (3.75,4.75);
		
		\draw[->,thick] (-2.75,0)--(3.75,0) node[right]{\tiny$x$};
		\draw[->,thick] (0,-.75)--(0,4.75) node[above]{\tiny$y$};
		
		\fill (-1,1) circle (5pt);
		\fill (-2,2) circle (5pt);
				
		\fill (0,1) circle (5pt);
		\fill (1,0) circle (5pt);

		\path[decoration=arrows, decorate] (-1,1) --++ (1,1) --++ (1,-1) --++ (-1,0);
		
		\path[decoration=arrows, decorate] (-2,2) --++ (1,1) --++ (1,1) --++ (1,-1) --++ (1,-1) --++ (1,-1) --++ (-1,0) --++ (0,-1) --++ (-1,0);
		
		\draw[ultra thick,dotted,mygreen] (0,2) --++ (1,-1);
		
		\draw[ultra thick,dotted,mygreen] (0,4) --++ (1,-1) --++ (1,-1) --++ (1,-1);
		
		\draw (0.5,-.75) node[below]{\tiny$(u X_1 + v X_1^{-1})^2(u X_2 + v X_2^{-1})$};
		
	\end{tikzpicture}
	\end{center}

	\begin{center}
	\begin{tikzpicture}[scale=.45,baseline=(current bounding box.center)]
		\fill [verylight-gray] (-2.75,-.75) rectangle (0,4.75);
		\fill [light-gray] (0,-.75) rectangle (3.75,1);
		
		\draw [help lines,step=1cm,dashed] (-2.75,-.75) grid (3.75,4.75);
		
		\draw[->,thick] (-2.75,0)--(3.75,0) node[right]{\tiny$x$};
		\draw[->,thick] (0,-.75)--(0,4.75) node[above]{\tiny$y$};
		
		\fill (-1,1) circle (5pt);
		\fill (-2,2) circle (5pt);
		
		\fill (0,1) circle (5pt);
		\fill (1,0) circle (5pt);

		\path[decoration=arrows, decorate] (-1,1) --++ (1,1) --++ (1,-1) --++ (-1,0);
		
		\path[decoration=arrows, decorate] (-2,2) --++ (1,1) --++ (1,1) --++ (1,-1) --++ (1,-1) --++ (1,-1) --++ (0,-1) --++ (-1,0) --++ (-1,0);
		
		\draw[ultra thick,dotted,mygreen] (0,2) --++ (1,-1);
		
		\draw[ultra thick,dotted,mygreen] (0,4) --++ (1,-1) --++ (1,-1) --++ (1,-1);
		
		\draw (0.5,-.75) node[below]{\tiny$(u X_1 + v X_1^{-1})(u X_2 + v X_2^{-1})^2$};
		
	\end{tikzpicture}
	\begin{tikzpicture}[scale=.45,baseline=(current bounding box.center)]
		\fill [verylight-gray] (-2.75,-.75) rectangle (0,3.75);
		\fill [light-gray] (0,-.75) rectangle (1.75,1);
		
		\draw [help lines,step=1cm,dashed] (-2.75,-.75) grid (1.75,3.75);
		
		\draw[->,thick] (-2.75,0)--(1.75,0) node[right]{\tiny $x$};
		\draw[->,thick] (0,-.75)--(0,3.75) node[above]{\tiny$y$};
		
		\fill (-1,1) circle (5pt);
		\fill (-2,2) circle (5pt);
				
		\fill (0,1) circle (5pt);
		\fill (1,0) circle (5pt);

		\path[decoration=arrows, decorate] (-1,1) --++ (1,1) --++ (1,-1) --++ (0,-1);
		
		\path[decoration=arrows, decorate] (-2,2) --++ (1,1) --++ (1,0) --++ (0,-2);
		
		\draw[ultra thick,dashed,myred] (0,2) --++ (1,-1);
		
		\draw[ultra thick,dotted,mygreen] (0,3) --++ (0,-2);

		\draw (0,-.75) node[below]{\tiny $- w (-uv)$};
		
	\end{tikzpicture}
	\begin{tikzpicture}[scale=.45,baseline=(current bounding box.center)]
		\fill [verylight-gray] (-2.75,-.75) rectangle (0,3.75);
		\fill [light-gray] (0,-.75) rectangle (1.75,1);
		
		\draw [help lines,step=1cm,dashed] (-2.75,-.75) grid (1.75,3.75);
		
		\draw[->,thick] (-2.75,0)--(1.75,0) node[right]{\tiny $x$};
		\draw[->,thick] (0,-.75)--(0,3.75) node[above]{\tiny$y$};
		
		\fill (-1,1) circle (5pt);
		\fill (-2,2) circle (5pt);
				
		\fill (0,1) circle (5pt);
		\fill (1,0) circle (5pt);

		\path[decoration=arrows, decorate] (-1,1) --++ (1,1) --++ (1,-1) --++ (0,-1);
		
		\path[decoration=arrows, decorate] (-2,2) --++ (1,0) --++ (1,1) --++ (0,-2);
		
		\draw[ultra thick,dashed,myred] (0,2) --++ (1,-1);
		
		\draw[ultra thick,dotted,mygreen] (0,3) --++ (0,-2);

		\draw (0,-.75) node[below]{\tiny $- w (-uv)$};
		
	\end{tikzpicture}
        \end{center}
It can be checked that the weights indeed add up to the generating function in \eqref{case2}.

At the end of Section~\ref{sec:first}, we show that, when setting $u=v=1$, $w=-1$ and $X_i=1$ for $i=1,\ldots,n$, the number of families of lattice paths as described in Theorem~\ref{interpret1} are equinumerous with the families of nonintersecting lattice paths that correspond to cyclically and vertically symmetric lozenge tilings of a hexagon with alternating side lengths $2n+2$ and $2n$ and a central triangular hole of size $2$. We provide a purely combinatorial proof of this result in Section~\ref{ap:bijective_proofs}.

\begin{remark}
	Building on the work of Lalonde \cite{Lal03}, Krattenthaler \cite{Kra06} showed that descending plane partitions 
	with entries less than or equal to $2n+1$ can be realized as cyclically symmetric lozenge tilings of a hexagon with alternating side length~$2n$ and $2n+2$ and central triangular hole with side length~$2$.
	Let us recall the definition of a descending plane partition. A \emph{descending plane partition} is the filling of a shifted Young diagram with positive integers such that
	\begin{itemize}
		\item entries are weakly decreasing along rows and strictly decreasing along columns and
		\item the first entry of each row does not exceed the length of the preceding row (if it exists) but is at least the length of its own row. 
	\end{itemize}
	Figure~\ref{fig:DPP} (right) depicts an example of a descending plane partition.
	
	To demonstrate the bijective correspondence between descending plane partitions and lozenge tilings, we take the lozenge tiling from Figure~\ref{fig:ExampleRhombusTiling}, rotate it by $30^\circ$ and draw lines as illustrated in Figure~\ref{fig:DPP}. We then read off the heights of the tiles~\horizontaltile, omitting tiles of height~$0$.	
	
	\begin{figure}[htb]
		\centering
		\begin{tikzpicture}[scale=.45,baseline=(current bounding box.center),rotate=30
			]
			
			\foreach \x in {-8,...,6}
			\foreach \y in {0,...,14}
			\coordinate (\x/\y) at ($ (60:\y) + (\x,0) $);
			
			\coordinate (Z) at (3,6.35);
			
			
			\draw (0/0) -- (6/0) -- (6/8) -- (0/14) -- (-8/14) -- (-8/8) -- cycle;
			
			\draw[fill=black] (0/6) -- (0/8) -- (-2/8) -- cycle;
			
			
			\draw (3/0) -- (3/1) -- (2/2) -- (2/1) -- cycle;
			\draw (2/2) -- (2/3) -- (1/4) -- (1/3) -- cycle;
			\draw (1/4) -- (1/5) -- (0/6) -- (0/5) -- cycle;
			\draw (-1/8) -- (-1/9) -- (-2/10) -- (-2/9) -- cycle;
			\draw (-2/10) -- (-2/11) -- (-3/12) -- (-3/11) -- cycle;
			\draw (-3/12) -- (-3/13) -- (-4/14) -- (-4/13) -- cycle;
			
			\draw (-4/4) -- (-3/4) -- (-3/5) -- (-4/5) -- cycle;
			\draw (-3/5) -- (-2/5) -- (-2/6) -- (-3/6) -- cycle;
			\draw (-2/6) -- (-1/6) -- (-1/7) -- (-2/7) -- cycle;
			\draw (0/8) -- (1/8) -- (1/9) -- (0/9) -- cycle;
			\draw (1/9) -- (2/9) -- (2/10) -- (1/10) -- cycle;
			\draw (2/10) -- (3/10) -- (3/11) -- (2/11) -- cycle;
			
			\draw (-8/11) -- (-7/11) -- (-6/10) -- (-7/10) -- cycle;
			\draw (-6/10) -- (-5/10) -- (-4/9) -- (-5/9) -- cycle;
			\draw (-4/9) -- (-3/9) -- (-2/8) -- (-3/8) -- cycle;
			\draw (0/7) -- (1/7) -- (2/6) -- (1/6) -- cycle;
			\draw (2/6) -- (3/6) -- (4/5) -- (3/5) -- cycle;
			\draw (4/5) -- (5/5) -- (6/4) -- (5/4) -- cycle;
			
			
			\draw (2/0) -- (3/0) -- (2/1) -- (1/1) -- cycle;
			\draw (1/0) -- (2/0) -- (1/1) -- (0/1) -- cycle;
			\draw (0/0) -- (1/0) -- (0/1) -- (-1/1) -- cycle;
			\draw (-1/1) -- (0/1) -- (-1/2) -- (-2/2) -- cycle;
			\draw (-2/2) -- (-1/2) -- (-2/3) -- (-3/3) -- cycle;
			\draw (-3/3) -- (-2/3) -- (-3/4) -- (-4/4) -- cycle;
			
			\draw (-4/4) -- (-4/5) -- (-5/6) -- (-5/5) -- cycle;
			\draw (-5/5) -- (-5/6) -- (-6/7) -- (-6/6) -- cycle;
			\draw (-6/6) -- (-6/7) -- (-7/8) -- (-7/7) -- cycle;
			\draw (-7/7) -- (-7/8) -- (-8/9) -- (-8/8) -- cycle;
			\draw (-7/9) -- (-7/10) -- (-8/11) -- (-8/10) -- cycle;
			
			\draw (-8/11) -- (-7/11) -- (-7/12) -- (-8/12) -- cycle;
			\draw (-8/12) -- (-7/12) -- (-7/13) -- (-8/13) -- cycle;
			\draw (-8/13) -- (-7/13) -- (-7/14) -- (-8/14) -- cycle;
			\draw (-7/13) -- (-6/13) -- (-6/14) -- (-7/14) -- cycle;
			\draw (-6/13) -- (-5/13) -- (-5/14) -- (-6/14) -- cycle;
			\draw (-5/13) -- (-4/13) -- (-4/14) -- (-5/14) -- cycle;
			
			\draw (-3/13) -- (-2/13) -- (-3/14) -- (-4/14) -- cycle;
			\draw (-2/13) -- (-1/13) -- (-2/14) -- (-3/14) -- cycle;
			\draw (-1/13) -- (0/13) -- (-1/14) -- (-2/14) -- cycle;
			\draw (0/13) -- (1/13) -- (0/14) -- (-1/14) -- cycle;
			\draw (1/12) -- (2/12) -- (1/13) -- (0/13) -- cycle;
			\draw (2/11) -- (3/11) -- (2/12) -- (1/12) -- cycle;
			
			\draw (4/9) -- (4/10) -- (3/11) -- (3/10) -- cycle;
			\draw (5/8) -- (5/9) -- (4/10) -- (4/9) -- cycle;
			\draw (6/7) -- (6/8) -- (5/9) -- (5/8) -- cycle;
			\draw (6/6) -- (6/7) -- (5/8) -- (5/7) -- cycle;
			\draw (6/5) -- (6/6) -- (5/7) -- (5/6) -- cycle;
			\draw (6/4) -- (6/5) -- (5/6) -- (5/5) -- cycle;
			
			\draw (5/3) -- (6/3) -- (6/4) -- (5/4) -- cycle;
			\draw (5/2) -- (6/2) -- (6/3) -- (5/3) -- cycle;
			\draw (5/1) -- (6/1) -- (6/2) -- (5/2) -- cycle;
			\draw (4/0) -- (5/0) -- (5/1) -- (4/1) -- cycle;
			\draw (3/0) -- (4/0) -- (4/1) -- (3/1) -- cycle;
			
			
			
			\draw ($ (60:8) + (-1,0) $) -- ($ (60:8) + (0,0) $);
			\draw ($ (60:9) + (-1,0) $) -- ($ (60:9) + (1,0) $);
			\draw ($ (60:10) + (-2,0) $) -- ($ (60:10) + (-1,0) $);
			\draw ($ (60:10) + (0,0) $) -- ($ (60:10) + (2,0) $);
			\draw ($ (60:11) + (-2,0) $) -- ($ (60:11) + (1,0) $);
			\draw ($ (60:12) + (-2,0) $) -- ($ (60:12) + (1,0) $);
			\draw ($ (60:13) + (-3,0) $) -- ($ (60:13) + (0,0) $);
			
			
			\draw ($ (60:10) + (-2,0) $) -- ($ (60:9) + (-1,0) $);
			\draw ($ (60:12) + (-3,0) $) -- ($ (60:11) + (-2,0) $);
			\draw ($ (60:10) + (-1,0) $) -- ($ (60:9) + (0,0) $);
			\draw ($ (60:13) + (-3,0) $) -- ($ (60:12) + (-2,0) $);
			\draw ($ (60:11) + (-1,0) $) -- ($ (60:10) + (0,0) $);
			\draw ($ (60:13) + (-2,0) $) -- ($ (60:12) + (-1,0) $);
			\draw ($ (60:11) + (0,0) $) -- ($ (60:10) + (1,0) $);
			\draw ($ (60:13) + (-1,0) $) -- ($ (60:12) + (0,0) $);
			\draw ($ (60:11) + (1,0) $) -- ($ (60:10) + (2,0) $);
			\draw ($ (60:13) + (0,0) $) -- ($ (60:11) + (2,0) $);
			
			
			\draw ($ (60:12) + (-3,0) $) -- ($ (60:13) + (-3,0) $);
			\draw ($ (60:10) + (-2,0) $) -- ($ (60:12) + (-2,0) $);
			\draw ($ (60:8) + (-1,0) $) -- ($ (60:9) + (-1,0) $);
			\draw ($ (60:10) + (-1,0) $) -- ($ (60:12) + (-1,0) $);
			\draw ($ (60:8) + (0,0) $) -- ($ (60:10) + (0,0) $);
			\draw ($ (60:11) + (0,0) $) -- ($ (60:12) + (0,0) $);
			\draw ($ (60:9) + (1,0) $) -- ($ (60:10) + (1,0) $);
			\draw ($ (60:11) + (1,0) $) -- ($ (60:12) + (1,0) $);
			\draw ($ (60:10) + (2,0) $) -- ($ (60:11) + (2,0) $);
			
			\begin{scope}[rotate around={120:(Z.center)}]
				
				
				\draw ($ (60:8) + (-1,0) $) -- ($ (60:8) + (0,0) $);
				\draw ($ (60:9) + (-1,0) $) -- ($ (60:9) + (1,0) $);
				\draw ($ (60:10) + (-2,0) $) -- ($ (60:10) + (-1,0) $);
				\draw ($ (60:10) + (0,0) $) -- ($ (60:10) + (2,0) $);
				\draw ($ (60:11) + (-2,0) $) -- ($ (60:11) + (1,0) $);
				\draw ($ (60:12) + (-2,0) $) -- ($ (60:12) + (1,0) $);
				\draw ($ (60:13) + (-3,0) $) -- ($ (60:13) + (0,0) $);
				
				
				\draw ($ (60:10) + (-2,0) $) -- ($ (60:9) + (-1,0) $);
				\draw ($ (60:12) + (-3,0) $) -- ($ (60:11) + (-2,0) $);
				\draw ($ (60:10) + (-1,0) $) -- ($ (60:9) + (0,0) $);
				\draw ($ (60:13) + (-3,0) $) -- ($ (60:12) + (-2,0) $);
				\draw ($ (60:11) + (-1,0) $) -- ($ (60:10) + (0,0) $);
				\draw ($ (60:13) + (-2,0) $) -- ($ (60:12) + (-1,0) $);
				\draw ($ (60:11) + (0,0) $) -- ($ (60:10) + (1,0) $);
				\draw ($ (60:13) + (-1,0) $) -- ($ (60:12) + (0,0) $);
				\draw ($ (60:11) + (1,0) $) -- ($ (60:10) + (2,0) $);
				\draw ($ (60:13) + (0,0) $) -- ($ (60:11) + (2,0) $);
				
				
				\draw ($ (60:12) + (-3,0) $) -- ($ (60:13) + (-3,0) $);
				\draw ($ (60:10) + (-2,0) $) -- ($ (60:12) + (-2,0) $);
				\draw ($ (60:8) + (-1,0) $) -- ($ (60:9) + (-1,0) $);
				\draw ($ (60:10) + (-1,0) $) -- ($ (60:12) + (-1,0) $);
				\draw ($ (60:8) + (0,0) $) -- ($ (60:10) + (0,0) $);
				\draw ($ (60:11) + (0,0) $) -- ($ (60:12) + (0,0) $);
				\draw ($ (60:9) + (1,0) $) -- ($ (60:10) + (1,0) $);
				\draw ($ (60:11) + (1,0) $) -- ($ (60:12) + (1,0) $);
				\draw ($ (60:10) + (2,0) $) -- ($ (60:11) + (2,0) $);
				
			\end{scope}
			
			\begin{scope}[rotate around={-120:(Z.center)}]
				
				
				\draw ($ (60:8) + (-1,0) $) -- ($ (60:8) + (0,0) $);
				\draw ($ (60:9) + (-1,0) $) -- ($ (60:9) + (1,0) $);
				\draw ($ (60:10) + (-2,0) $) -- ($ (60:10) + (-1,0) $);
				\draw ($ (60:10) + (0,0) $) -- ($ (60:10) + (2,0) $);
				\draw ($ (60:11) + (-2,0) $) -- ($ (60:11) + (1,0) $);
				\draw ($ (60:12) + (-2,0) $) -- ($ (60:12) + (1,0) $);
				\draw ($ (60:13) + (-3,0) $) -- ($ (60:13) + (0,0) $);
				
				
				\draw ($ (60:10) + (-2,0) $) -- ($ (60:9) + (-1,0) $);
				\draw ($ (60:12) + (-3,0) $) -- ($ (60:11) + (-2,0) $);
				\draw ($ (60:10) + (-1,0) $) -- ($ (60:9) + (0,0) $);
				\draw ($ (60:13) + (-3,0) $) -- ($ (60:12) + (-2,0) $);
				\draw ($ (60:11) + (-1,0) $) -- ($ (60:10) + (0,0) $);
				\draw ($ (60:13) + (-2,0) $) -- ($ (60:12) + (-1,0) $);
				\draw ($ (60:11) + (0,0) $) -- ($ (60:10) + (1,0) $);
				\draw ($ (60:13) + (-1,0) $) -- ($ (60:12) + (0,0) $);
				\draw ($ (60:11) + (1,0) $) -- ($ (60:10) + (2,0) $);
				\draw ($ (60:13) + (0,0) $) -- ($ (60:11) + (2,0) $);
				
				
				\draw ($ (60:12) + (-3,0) $) -- ($ (60:13) + (-3,0) $);
				\draw ($ (60:10) + (-2,0) $) -- ($ (60:12) + (-2,0) $);
				\draw ($ (60:8) + (-1,0) $) -- ($ (60:9) + (-1,0) $);
				\draw ($ (60:10) + (-1,0) $) -- ($ (60:12) + (-1,0) $);
				\draw ($ (60:8) + (0,0) $) -- ($ (60:10) + (0,0) $);
				\draw ($ (60:11) + (0,0) $) -- ($ (60:12) + (0,0) $);
				\draw ($ (60:9) + (1,0) $) -- ($ (60:10) + (1,0) $);
				\draw ($ (60:11) + (1,0) $) -- ($ (60:12) + (1,0) $);
				\draw ($ (60:10) + (2,0) $) -- ($ (60:11) + (2,0) $);
				
			\end{scope}
			
			\begin{scope}[xscale=-1,shift={(-6,0)}]
				
				
				\draw ($ (60:8) + (-1,0) $) -- ($ (60:8) + (0,0) $);
				\draw ($ (60:9) + (-1,0) $) -- ($ (60:9) + (1,0) $);
				\draw ($ (60:10) + (-2,0) $) -- ($ (60:10) + (-1,0) $);
				\draw ($ (60:10) + (0,0) $) -- ($ (60:10) + (2,0) $);
				\draw ($ (60:11) + (-2,0) $) -- ($ (60:11) + (1,0) $);
				\draw ($ (60:12) + (-2,0) $) -- ($ (60:12) + (1,0) $);
				\draw ($ (60:13) + (-3,0) $) -- ($ (60:13) + (0,0) $);
				
				
				\draw ($ (60:10) + (-2,0) $) -- ($ (60:9) + (-1,0) $);
				\draw ($ (60:12) + (-3,0) $) -- ($ (60:11) + (-2,0) $);
				\draw ($ (60:10) + (-1,0) $) -- ($ (60:9) + (0,0) $);
				\draw ($ (60:13) + (-3,0) $) -- ($ (60:12) + (-2,0) $);
				\draw ($ (60:11) + (-1,0) $) -- ($ (60:10) + (0,0) $);
				\draw ($ (60:13) + (-2,0) $) -- ($ (60:12) + (-1,0) $);
				\draw ($ (60:11) + (0,0) $) -- ($ (60:10) + (1,0) $);
				\draw ($ (60:13) + (-1,0) $) -- ($ (60:12) + (0,0) $);
				\draw ($ (60:11) + (1,0) $) -- ($ (60:10) + (2,0) $);
				\draw ($ (60:13) + (0,0) $) -- ($ (60:11) + (2,0) $);
				
				
				\draw ($ (60:12) + (-3,0) $) -- ($ (60:13) + (-3,0) $);
				\draw ($ (60:10) + (-2,0) $) -- ($ (60:12) + (-2,0) $);
				\draw ($ (60:8) + (-1,0) $) -- ($ (60:9) + (-1,0) $);
				\draw ($ (60:10) + (-1,0) $) -- ($ (60:12) + (-1,0) $);
				\draw ($ (60:8) + (0,0) $) -- ($ (60:10) + (0,0) $);
				\draw ($ (60:11) + (0,0) $) -- ($ (60:12) + (0,0) $);
				\draw ($ (60:9) + (1,0) $) -- ($ (60:10) + (1,0) $);
				\draw ($ (60:11) + (1,0) $) -- ($ (60:12) + (1,0) $);
				\draw ($ (60:10) + (2,0) $) -- ($ (60:11) + (2,0) $);
				
			\end{scope}
			
			\begin{scope}[xscale=-1,shift={(-6,0)},rotate around={120:(Z.center)}]
				
				
				\draw ($ (60:8) + (-1,0) $) -- ($ (60:8) + (0,0) $);
				\draw ($ (60:9) + (-1,0) $) -- ($ (60:9) + (1,0) $);
				\draw ($ (60:10) + (-2,0) $) -- ($ (60:10) + (-1,0) $);
				\draw ($ (60:10) + (0,0) $) -- ($ (60:10) + (2,0) $);
				\draw ($ (60:11) + (-2,0) $) -- ($ (60:11) + (1,0) $);
				\draw ($ (60:12) + (-2,0) $) -- ($ (60:12) + (1,0) $);
				\draw ($ (60:13) + (-3,0) $) -- ($ (60:13) + (0,0) $);
				
				
				\draw ($ (60:10) + (-2,0) $) -- ($ (60:9) + (-1,0) $);
				\draw ($ (60:12) + (-3,0) $) -- ($ (60:11) + (-2,0) $);
				\draw ($ (60:10) + (-1,0) $) -- ($ (60:9) + (0,0) $);
				\draw ($ (60:13) + (-3,0) $) -- ($ (60:12) + (-2,0) $);
				\draw ($ (60:11) + (-1,0) $) -- ($ (60:10) + (0,0) $);
				\draw ($ (60:13) + (-2,0) $) -- ($ (60:12) + (-1,0) $);
				\draw ($ (60:11) + (0,0) $) -- ($ (60:10) + (1,0) $);
				\draw ($ (60:13) + (-1,0) $) -- ($ (60:12) + (0,0) $);
				\draw ($ (60:11) + (1,0) $) -- ($ (60:10) + (2,0) $);
				\draw ($ (60:13) + (0,0) $) -- ($ (60:11) + (2,0) $);
				
				
				\draw ($ (60:12) + (-3,0) $) -- ($ (60:13) + (-3,0) $);
				\draw ($ (60:10) + (-2,0) $) -- ($ (60:12) + (-2,0) $);
				\draw ($ (60:8) + (-1,0) $) -- ($ (60:9) + (-1,0) $);
				\draw ($ (60:10) + (-1,0) $) -- ($ (60:12) + (-1,0) $);
				\draw ($ (60:8) + (0,0) $) -- ($ (60:10) + (0,0) $);
				\draw ($ (60:11) + (0,0) $) -- ($ (60:12) + (0,0) $);
				\draw ($ (60:9) + (1,0) $) -- ($ (60:10) + (1,0) $);
				\draw ($ (60:11) + (1,0) $) -- ($ (60:12) + (1,0) $);
				\draw ($ (60:10) + (2,0) $) -- ($ (60:11) + (2,0) $);
				
			\end{scope}
			
			\begin{scope}[xscale=-1,shift={(-6,0)},rotate around={-120:(Z.center)}]
				
				
				\draw ($ (60:8) + (-1,0) $) -- ($ (60:8) + (0,0) $);
				\draw ($ (60:9) + (-1,0) $) -- ($ (60:9) + (1,0) $);
				\draw ($ (60:10) + (-2,0) $) -- ($ (60:10) + (-1,0) $);
				\draw ($ (60:10) + (0,0) $) -- ($ (60:10) + (2,0) $);
				\draw ($ (60:11) + (-2,0) $) -- ($ (60:11) + (1,0) $);
				\draw ($ (60:12) + (-2,0) $) -- ($ (60:12) + (1,0) $);
				\draw ($ (60:13) + (-3,0) $) -- ($ (60:13) + (0,0) $);
				
				
				\draw ($ (60:10) + (-2,0) $) -- ($ (60:9) + (-1,0) $);
				\draw ($ (60:12) + (-3,0) $) -- ($ (60:11) + (-2,0) $);
				\draw ($ (60:10) + (-1,0) $) -- ($ (60:9) + (0,0) $);
				\draw ($ (60:13) + (-3,0) $) -- ($ (60:12) + (-2,0) $);
				\draw ($ (60:11) + (-1,0) $) -- ($ (60:10) + (0,0) $);
				\draw ($ (60:13) + (-2,0) $) -- ($ (60:12) + (-1,0) $);
				\draw ($ (60:11) + (0,0) $) -- ($ (60:10) + (1,0) $);
				\draw ($ (60:13) + (-1,0) $) -- ($ (60:12) + (0,0) $);
				\draw ($ (60:11) + (1,0) $) -- ($ (60:10) + (2,0) $);
				\draw ($ (60:13) + (0,0) $) -- ($ (60:11) + (2,0) $);
				
				
				\draw ($ (60:12) + (-3,0) $) -- ($ (60:13) + (-3,0) $);
				\draw ($ (60:10) + (-2,0) $) -- ($ (60:12) + (-2,0) $);
				\draw ($ (60:8) + (-1,0) $) -- ($ (60:9) + (-1,0) $);
				\draw ($ (60:10) + (-1,0) $) -- ($ (60:12) + (-1,0) $);
				\draw ($ (60:8) + (0,0) $) -- ($ (60:10) + (0,0) $);
				\draw ($ (60:11) + (0,0) $) -- ($ (60:12) + (0,0) $);
				\draw ($ (60:9) + (1,0) $) -- ($ (60:10) + (1,0) $);
				\draw ($ (60:11) + (1,0) $) -- ($ (60:12) + (1,0) $);
				\draw ($ (60:10) + (2,0) $) -- ($ (60:11) + (2,0) $);
				
			\end{scope}
			
			\begin{scope}[bullet/.style={circle, fill, minimum size=4pt,               inner sep=0pt, outer sep=0pt},nodes=bullet]
				
				\node[bullet] at ($ (60:11) + (-.5,0) $) {};
				\node[bullet] at ($ (60:13) + (-.5,0) $) {};
				\node[bullet] at ($ (60:14) + (-.5,0) $) {};
				
				\node[bullet] at ($ (60:4) + (2.5,0) $) {};
				\node[bullet] at ($ (60:2) + (4.5,0) $) {};
				\node[bullet] at ($ (60:1) + (5.5,0) $) {};
				
				\draw[ultra thick] ($ (60:11) + (-.5,0) $) -- ($ (60:10) + (.5,0) $) -- ($ (60:7) + (.5,0) $) -- ($ (60:6) + (1.5,0) $) -- ($ (60:5) + (1.5,0) $) -- ($ (60:4) + (2.5,0) $);
				
				\draw[ultra thick] ($ (60:13) + (-.5,0) $) -- ($ (60:12) + (.5,0) $) -- ($ (60:11) + (.5,0) $) -- ($ (60:10) + (1.5,0) $) -- ($ (60:8) + (1.5,0) $) -- ($ (60:7) + (2.5,0) $) -- ($ (60:6) + (2.5,0) $) -- ($ (60:4) + (4.5,0) $) -- ($ (60:2) + (4.5,0) $);
				
				\draw[ultra thick] ($ (60:14) + (-.5,0) $) -- ($ (60:11) + (2.5,0) $) -- ($ (60:10) + (2.5,0) $) -- ($ (60:9) + (3.5,0) $) -- ($ (60:6) + (3.5,0) $) -- ($ (60:4) + (5.5,0) $) -- ($ (60:1) + (5.5,0) $);
				
				\draw[dotted,thick] ($ (60:8) + (0,0) $) -- ($ (60:14) + (0,0) $);
				\draw[dotted,thick] ($ (60:6) + (0,0) $) -- ($ (60:0) + (6,0) $);				
				
			\end{scope}
			
		\end{tikzpicture}
		$\longleftrightarrow$
		\ytableaushort{
			777633,
			\none65322,
			\none\none41
			}
		\caption{An example of the bijective correspondence between cyclically symmetric lozenge tilings of a holey hexagon and descending plane partitions. The lozenge tiling on the left side is the same as in Figure~\ref{fig:ExampleRhombusTiling} but rotated by $30^\circ$. The dotted lines mark a third of the lozenge tiling as the fundamental area.}
		\label{fig:DPP}
	\end{figure}
	
	While imposing an additional reflective symmetry on the cyclically symmetric lozenge tilings arises naturally from a geometric perspective, 
	characterizing the corresponding descending plane partitions yields a notion that is more obscure than enlightening. Details are provided in 
	\cite{MilRobRum83}, see in particular Conjecture~3S. For this reason, we choose to work with lozenge tilings rather than descending plane partitions.

\end{remark}

\subsection*{Outline of the paper} 
The paper is organized as follows: In Section~\ref{sec:AMT}, we introduce a formula for the generating function of arrowed monotone triangles.
We observe that for our particular bottom row, we can express the generating function with the help of a determinant.   
In Section~\ref{sec:first}, we provide the proof of Theorem~\ref{interpret1} and relate the families of lattice paths from Theorem~\ref{interpret1} to the holey symmetric lozenge tilings that are known to be equinumerous with VSASMs; this relation is also proved combinatorially in Section~\ref{ap:bijective_proofs}. In Section~\ref{sec:third}, we establish Theorem~\ref{interpret4} by relating the families of lattices paths from Theorems~\ref{interpret1} to the signless interpretation in terms of plane partitions in Theorem~\ref{interpret4}. Finally, in Section~\ref{sec:further_lattice_paths}, we establish two additional families of lattice paths with the same generating function as our particular arrowed monotone triangles.

In Appendix~\ref{ap:computational_proof}, we present a second proof of Theorem~\ref{interpret4}, which does not rely on Theorem~\ref{interpret1}.

\section{The generating function of arrowed monotone triangles}
\label{sec:AMT} 

We point out that there is a close analogy between arrowed monotone triangles and their generating function with respect to a fixed bottom row on one side and  semistandard Young tableaux and Schur polynomials on the other side: Recall that Gelfand--Tsetlin patterns are defined as monotone triangles with the condition of the strict increase along rows dropped. As demonstrated in the introduction, semistandard Young tableaux are in bijective correspondence with Gelfand--Tsetlin patterns. Schur polynomials are multivariate generating functions of semistandard Young tableaux with respect to the weight 
\begin{equation*}
X_1^{\text{$\#$ of $1$'s}} X_2^{\text{$\#$ of $2$'s}} \cdots X_n^{\text{$\#$ of $n$'s}}.
\end{equation*}
However, in terms of Gelfand--Tsetlin patterns, this weight can obviously be expressed as 
\begin{equation*}
\prod_{i=1}^n X_i^{\text{(sum of entries in row $i$)}-\text{(sum of entries in row $i-1$)}},
\end{equation*}
which is analogous to the weight we use for arrowed monotone triangles.
This analogy extends also to the following formula for the generating function of arrowed monotone triangles with given 
bottom row, which can be expressed in terms of (extended) Schur polynomials. We define 
\begin{equation*} 
s_{(k_n,\ldots,k_1)}(X_1,\ldots,X_n) \coloneq 
\frac{\det_{1 \le i,j \le n} \left( X_i^{k_j+j-1} \right)}{\prod_{1 \le i < j \le n} (X_j-X_i)}.
\end{equation*} 
Note that for $0 \le k_1 \le k_2 \le \ldots \le k_n$, $s_{(k_n,\ldots,k_1)}(X_1,\ldots,X_n)$ is the Schur polynomial of the partition $(k_n,k_{n-1},\ldots,k_1)$. In \cite[Theorem 2.2]{nASMDPP}, the following theorem has been established, which is one of our main ingredients for the present paper.

\begin{theorem} 
\label{robbins}
The generating function of arrowed monotone triangles with bottom row $k_1 <  k_2 < \ldots < k_n$ 
is 
\begin{equation*}
\prod_{i=1}^{n} (u X_i + v X_i^{-1} + w) 
\prod_{1 \le i < j \le n} \left( u  \e_{k_i} + v \e_{k_j}^{-1} + w \e_{k_i} \e_{k_j}^{-1}  \right) 
s_{(k_n,k_{n-1},\ldots,k_1)}(X_1,\ldots,X_n), 
\end{equation*}
where $\e_x$ denotes the \emph{shift operator}, defined as $\e_x p(x) \coloneq p(x+1)$. 
\end{theorem}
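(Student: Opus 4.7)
My plan is to prove this by induction on $n$, the number of rows of the monotone triangle. The base case $n=1$ is a direct verification: the three decorated versions of the single entry $k_1$ contribute $uX_1^{k_1+1}$, $vX_1^{k_1-1}$, and $wX_1^{k_1}$ respectively, summing to $(uX_1+vX_1^{-1}+w)X_1^{k_1}$, which matches the right-hand side since $s_{(k_1)}(X_1) = X_1^{k_1}$ and the operator product is empty.

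For the induction step, I would condition on the penultimate row $(l_1,\ldots,l_{n-1})$, which must satisfy the interlacing $k_i \le l_i \le k_{i+1}$ together with the strict inequalities $l_i < l_{i+1}$. The contribution of the bottom row (row $n$) then factors as $X_n^{\sum_j k_j - \sum_i l_i}$ times a product $\prod_{j=1}^n D_j(k_j;l_{j-1},l_j)$ of local decoration factors, where $D_j$ equals $uX_n + vX_n^{-1} + w$ when $k_j$ differs from both of its neighbors in row $n-1$ (free choice) and degenerates to $uX_n + w$, $vX_n^{-1} + w$, or $w$ when $k_j$ coincides with only its $\nwarrow$-neighbor, only its $\nearrow$-neighbor, or both (with the convention that $l_0$ and $l_n$ do not exist). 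By the induction hypothesis, summing over arrowed completions of the top $n-1$ rows above a fixed bottom row $l_1 < \cdots < l_{n-1}$ gives the level-$(n-1)$ operator formula applied to $s_{(l_{n-1},\ldots,l_1)}(X_1,\ldots,X_{n-1})$.

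The heart of the argument is then to establish the algebraic identity
\[
\sum_{l_1 < \cdots < l_{n-1}} X_n^{\sum_j k_j - \sum_i l_i} \prod_{j=1}^n D_j(k_j;l_{j-1},l_j)\, g(l_1,\ldots,l_{n-1}) = (uX_n + vX_n^{-1} + w) \prod_{i=1}^{n-1} (u\e_{k_i} + v\e_{k_n}^{-1} + w\e_{k_i}\e_{k_n}^{-1})\, g'(k_1,\ldots,k_n),
\]
where $g$ is of the form produced by the induction (so $g'$ is the corresponding object in one more variable, i.e.\ $s_{(k_n,\ldots,k_1)}(X_1,\ldots,X_n)$ multiplied by the operator factors not involving index $n$). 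The cleanest way I see to derive this is to expand each operator $u\e_{k_i} + v\e_{k_n}^{-1} + w\e_{k_i}\e_{k_n}^{-1}$ into a geometric series in $X_n$ after applying it to $s_{(k_n,\ldots,k_1)}$, which via the bialternant formula produces sums of $X_n^{l_i}$ over ranges $k_i \le l_i \le k_{i+1}$. Matching these with the $l$-summation and collecting the telescoping boundary contributions should recover precisely the four cases of $D_j$.

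The main obstacle will be the careful matching of the four decoration cases at each bottom-row entry against the boundary terms that appear when the geometric series telescopes at $l_i = k_i$ or $l_i = k_{i+1}$, for these are exactly the situations where a decoration is forced. The classical monotone triangle operator formula \cite{Fis06}, recovered at $u=v=1$, $w=-1$, handles the same telescoping via a signed inclusion-exclusion that collapses to the number of monotone triangles; the arrowed extension should work by the same mechanism, but with the three decoration symbols providing a more transparent bookkeeping of the boundary pieces. Once the identity above is established, the factor $(uX_n+vX_n^{-1}+w)$ and the new operators involving index $n$ combine with the inductive output to produce exactly the level-$n$ formula, closing the induction.
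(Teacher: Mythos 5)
The paper itself does not prove this statement --- it is imported verbatim from \cite[Theorem~2.2]{nASMDPP} --- so your proposal has to stand on its own. Its general shape (induction on $n$, conditioning on the penultimate row $l_1<\cdots<l_{n-1}$ interlacing the bottom row, and a summation identity converting the branching of $s_{(k_n,\ldots,k_1)}$ into the operator product) is the right one. But your local decoration factors $D_j$ are incorrect. If $k_j$ equals its $\nwarrow$-neighbour $l_{j-1}$, the rules force the decoration to be exactly $\nearrow$ (a $\nenwarrow$ would contain a $\nwarrow$-arrow asserting that $k_j$ differs from $l_{j-1}$), so the factor is $uX_n$, not $uX_n+w$; symmetrically it is $vX_n^{-1}$, not $vX_n^{-1}+w$, when $k_j=l_j$; and the ``both'' case never occurs, since $l_{j-1}<l_j$ excludes two simultaneous coincidences. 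You can check this against the paper's $n=2$ example: the forced entry $^{\nwarrow}0$ contributes the bare factor $vX_2^{-1}$, which is what produces the term $vX_2(uX_1+vX_1^{-1}+w)(uX_2+vX_2^{-1}+w)$ in \eqref{case2}. With your factors the identity you need already fails at $n=2$; with the corrected ones it holds there, because $(u\e_{k_1}+v\e_{k_2}^{-1}+w\e_{k_1}\e_{k_2}^{-1})\,s_{(k_2,k_1)}(X_1,X_2)=\sum_{l=k_1+1}^{k_2-1}(uX_2+vX_2^{-1}+w)X_1^lX_2^{k_1+k_2-l}+uX_1^{k_2}X_2^{k_1+1}+vX_1^{k_1}X_2^{k_2-1}$, whose two boundary terms are exactly the forced weights $uX_2$ and $vX_2^{-1}$.

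The more serious problem is that the step you call the heart of the argument is where the entire content of the theorem lives, and it is only asserted. For $n\ge 3$ the induction hypothesis hands you $\prod_{1\le i<j\le n-1}\bigl(u\e_{l_i}+v\e_{l_j}^{-1}+w\e_{l_i}\e_{l_j}^{-1}\bigr)s_{(l_{n-1},\ldots,l_1)}(X_1,\ldots,X_{n-1})$, an expression in which the shift operators act on the summation variables $l_i$. You must sum this against $X_n^{\sum k-\sum l}\prod_j D_j$ over strictly increasing interlacing sequences and show that the $l$-operators transmute into the operators $u\e_{k_i}+v\e_{k_j}^{-1}+w\e_{k_i}\e_{k_j}^{-1}$ acting on the $k$'s, while the new factor $(uX_n+vX_n^{-1}+w)$ and the operators involving index $n$ emerge from the boundary corrections. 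That is not a routine telescoping: one is commuting a constrained summation operator past a product of shift operators, and in the existing proofs this requires extending the formula polynomially to arbitrary (not necessarily increasing) integer sequences and proving separate lemmas about the summation operator. Until that identity is stated precisely, with the corrected $D_j$, and proved, the induction step is a gap rather than a proof.
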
 
Using the antisymmetrizer, defined as 
\begin{equation*}
\asym_{X_1,\ldots,X_n} \left[ f(X_{1},\ldots,X_{n}) \right] \coloneq \sum_{\sigma \in {\mathcal S}_n}  \sgn \sigma 
f(X_{\sigma(1)},\ldots,X_{\sigma(n)}), 
\end{equation*}
we rewrite the expression from Theorem~\ref{robbins} and obtain
\begin{multline}
\label{eq:antisymformula}
\prod_{i=1}^{n} (u  X_i + v  X_i^{-1}+w) \prod_{1 \le i < j \le n} \left( u  \e_{k_i} + v  \e_{k_j}^{-1} + w  \e_{k_i} \e_{k_j}^{-1} \right) 
s_{(k_n,k_{n-1},\ldots,k_1)}(X_1,\ldots,X_n) \\
= \prod_{i=1}^{n} (u  X_i + v  X_i^{-1}+w) \prod_{1 \le i < j \le n} \left( u  \e_{k_i} + v  \e_{k_j}^{-1} + w  \e_{k_i} \e_{k_j}^{-1} \right)
\frac{\asym_{X_1,\ldots,X_n} \left[ \prod_{i=1}^{n} X_i^{k_i+i-1} \right] }{\prod_{1 \le i < j \le n} (X_j - X_i)} \\ 
= \prod_{i=1}^{n} (u  X_i + v  X_i^{-1}+w)  \frac{\asym_{X_1,\ldots,X_n} \left[ \prod_{1 \le i < j \le n} \left( u  \e_{k_i} + v  \e_{k_j}^{-1} + w  \e_{k_i} \e_{k_j}^{-1} \right) \prod_{i=1}^{n} X_i^{k_i+i-1} \right] }{\prod_{1 \le i < j \le n} (X_j - X_i)} \\ 
= \prod_{i=1}^{n} (u  X_i + v  X_i^{-1}+w) \frac{\asym_{X_1,\ldots,X_n} \left[ \prod_{1 \le i <  j \le n} \left( u X_i  + v  X_j^{-1} + w  X_i X_j^{-1} \right) \prod_{i=1}^{n} X_i^{k_i+i-1} \right] }{\prod_{1 \le i < j \le n} (X_j - X_i)} \\
=  \frac{\asym_{X_1,\ldots,X_n} \left[ \prod_{1 \le i \le j \le n} \left( u  X_j + v  X_i^{-1}  + w  \right) \prod_{i=1}^{n} X_i^{k_i+n-i} \right] }{\prod_{1 \le i < j \le n} (X_j - X_i)}.
\end{multline} 
We set $k_i=2i-2$ for all $1\le i \le n$ to restrict to VSASMs. We also multiply with 
\begin{equation}
\label{Bfactor}  
\prod_{1 \le i \le j \le n} \left( u - v X_i^{-1} X_j^{-1}  \right),  
\end{equation} 
which is a symmetric function and can therefore be put inside the antisymmetrizer. Thus, we obtain
\begin{multline}\label{eq:genfun_AMT1} 
\frac{\asym_{X_1,\ldots,X_n} \left[ \prod_{1 \le i \le j \le n} \left( u  X_j + v  X_i^{-1}  + w  \right) 
\left(u -  v X_i^{-1} X_j^{-1}  \right) 
\prod_{i=1}^{n} X_i^{i+n-2} \right] }{\prod_{1 \le i < j \le n} (X_j - X_i)} \\
= \prod_{i=1}^{n} X_i^{n-2} \frac{\asym_{X_1,\ldots,X_n} \left[ \prod_{1 \le i \le j \le n} \left( u  X_j + v  X_i^{-1}  + w  \right) 
\left(u X_j -  v X_i^{-1}  \right)  \right] }{\prod_{1 \le i < j \le n} (X_j - X_i)}.
\end{multline}
The following lemma allows us to manipulate the previous expression even further. It first appeared in \cite[Lemma~3.1]{TSPPmany} with a computational proof based on algebraic manipulations, while in 
\cite[Lemma 4.1]{TSPP} also a combinatorial proof is provided. In this paper, we give a new proof which is shorter than the other two.

\begin{lemma}
	\label{general}
	Let $n \ge 1$, and $\mathbf{Y}=(Y_1,\ldots,Y_n), \mathbf{Z}=(Z_1,\ldots,Z_n)$ be two sets of indeterminants.
	Then
	\begin{equation}\label{eq:antisymlemma}
		\det_{1 \le i, j \le n} \left( Y_i^j - Z_i^j \right) 
		= \overline{\asym}  \left[  \prod_{1 \le i \le j \le n} (Y_j-Z_i)     \right],
	\end{equation}
	with 
	\begin{equation*}
	\overline{\asym} \left[f(\mathbf{Y};\mathbf{Z})\right] \coloneq \sum_{\sigma \in {\mathcal{S}_n}} \sgn \sigma f(Y_{\sigma(1)},\ldots,Y_{\sigma(n)};Z_{\sigma(1)},\ldots,Z_{\sigma(n)}).
	\end{equation*}
\end{lemma}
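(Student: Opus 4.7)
My plan is to first reduce both sides by extracting a common factor, and then prove the reduced identity by induction on $n$.

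\textbf{Reduction.} Both sides of \eqref{eq:antisymlemma} vanish when $Y_k = Z_k$ for any $k$. For the LHS, the $k$-th row of the matrix is identically zero. For the RHS, each permuted product contains (via the diagonal entries $i = j$) the factor $\prod_\ell (Y_{\sigma(\ell)} - Z_{\sigma(\ell)}) = \prod_\ell (Y_\ell - Z_\ell)$, which is invariant under the diagonal $S_n$-action and therefore factors out of $\overline{\asym}$. Using $Y^j - Z^j = (Y - Z) h_{j-1}(Y, Z)$ with $h_{j-1}(Y, Z) = \sum_{k=0}^{j-1} Y^{j-1-k} Z^k$ to factor $(Y_i - Z_i)$ from each row on the LHS, the identity reduces to
\begin{equation*}
\det_{1 \le i, j \le n}\bigl(h_{j-1}(Y_i, Z_i)\bigr) = \overline{\asym}\Bigl[\prod_{1 \le i < j \le n}(Y_j - Z_i)\Bigr].
\end{equation*}

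\textbf{Induction on $n$.} I would prove the reduced identity by induction on $n$, the base case $n = 1$ being the tautology $1 = 1$. For the inductive step, I view both sides as polynomials in $Z_n$ with all other variables as parameters; both sides have degree at most $n - 1$ in $Z_n$. Indeed, on the LHS, $Z_n$ appears only in row $n$ through $h_{j-1}(Y_n, Z_n)$, contributing $Z_n$-degree $j - 1 \le n-1$. On the RHS, $Z_n$ appears in the factors $(Y_{\sigma(j)} - Z_n)$ for $j \in \{\sigma^{-1}(n)+1, \ldots, n\}$, yielding at most $n - 1$ such factors.

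\textbf{Matching coefficients.} To complete the proof, I would match the coefficients of $Z_n^k$ on both sides for each $k \in \{0, 1, \ldots, n-1\}$. On the LHS, a Laplace expansion along row $n$ extracts the coefficient as $\sum_{j > k}(-1)^{n+j} Y_n^{j-1-k} M_{n,j}$, where $M_{n,j}$ is the $(n-1) \times (n-1)$ minor obtained by deleting row $n$ and column $j$. On the RHS, the coefficient of $Z_n^k$ arises by summing, over permutations $\sigma$ with $r := \sigma^{-1}(n) \le n-k$ and over $k$-subsets of the $n - r$ factors involving $Z_n$ from which one takes the $-Z_n$ contribution. Re-parametrising $\sigma$ by $r$ together with its restriction to $[n] \setminus \{r\}$ introduces a cyclic-shift sign $(-1)^{n-r}$; what remains is an $(n-1)$-variable antisymmetrization to which the induction hypothesis applies. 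The main obstacle will be the delicate sign bookkeeping---reconciling the factors $(-1)^{n+j}$ from Laplace expansion, $(-1)^{n-r}$ from the cyclic shift, and $(-1)^k$ from the $-Z_n$ selections---and confirming that the reindexed $(n-1)$-antisymmetrization on the RHS precisely reproduces the required skip-column minor $M_{n,j}$ on the LHS via the inductive identity.
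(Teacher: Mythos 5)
Your reduction step is correct and is a genuinely nice observation: pulling $\prod_{i}(Y_i-Z_i)$ out of both sides (from the rows on the left via $Y^j-Z^j=(Y-Z)h_{j-1}(Y,Z)$, and from the diagonal factors $i=j$ on the right, which form a symmetric function of the pairs) does reduce the claim to $\det_{i,j}\bigl(h_{j-1}(Y_i,Z_i)\bigr)=\overline{\asym}\bigl[\prod_{i<j}(Y_j-Z_i)\bigr]$, and your degree bounds in $Z_n$ are also right. The gap is the coefficient-matching step, which is where all the content lies and which you only gesture at; moreover, as sketched it runs into two concrete obstructions rather than mere ``sign bookkeeping.'' First, on the left the coefficient of $Z_n^k$ is $\sum_{j>k}(-1)^{n+j}Y_n^{\,j-1-k}M_{n,j}$, where $M_{n,j}$ is a minor whose columns carry the degree set $\{0,\dots,n-1\}\setminus\{j-1\}$; your induction hypothesis only identifies the determinant with the consecutive degree set $\{0,\dots,n-2\}$, i.e.\ only $M_{n,n}$, and says nothing about the skip-column minors $M_{n,j}$ with $j<n$. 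Second, on the right, after fixing $r=\sigma^{-1}(n)$ and passing to the restriction $\tau\in\mathcal{S}_{n-1}$, the coefficient of $Z_n^k$ is not a bare $(n-1)$-variable antisymmetrization: each summand is $\prod_{1\le i<j\le n-1}(Y_{\tau(j)}-Z_{\tau(i)})$ multiplied by the $\tau$-dependent factors $e_{n-r-k}\bigl(Y_{\tau(r)},\dots,Y_{\tau(n-1)}\bigr)$ (from choosing which $k$ of the $n-r$ factors contribute $-Z_n$) and $\prod_{i<r}(Y_n-Z_{\tau(i)})$ (from the pairs $i<j=r$). The induction hypothesis cannot be applied to an antisymmetrization of the product decorated with such position-dependent factors, so the induction as set up does not close.

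For comparison, the paper also inducts on $n$ but builds the recursion on the antisymmetrizer side: grouping permutations by $i=\sigma(1)$ gives $A_n=\sum_{i}(-1)^{i+1}\prod_{k}(Y_k-Z_i)\,A_{n-1}(\dots,\widehat{Y_i},\dots;\dots,\widehat{Z_i},\dots)$, so the induction hypothesis turns $A_n$ into a single $n\times n$ determinant whose first $n-1$ columns are $Y_i^j-Z_i^j$ and whose last column is $\prod_k(Y_k-Z_i)$; a column operation with the elementary symmetric polynomials $e_{n-j}(Y_1,\dots,Y_n)$ then collapses the last column to $(-1)^n(Z_i^n-Y_i^n)$. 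If you want to salvage your route, you would need either a strengthened inductive statement valid for arbitrary degree sets (so that the minors $M_{n,j}$ and the decorated antisymmetrizations are both covered), or a different device for comparing the two degree-$(n-1)$ polynomials in $Z_n$.
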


\begin{proof}  The proof is by induction on $n$. The result is obvious for $n=1$. 
	Let $A_n(\mathbf{Y};\mathbf{Z})$ denote the right-hand side of \eqref{eq:antisymlemma}, and 
	observe that we have 
	\begin{equation*}
	A_n(\mathbf{Y};\mathbf{Z})
	=  \sum_{i=1}^{n} (-1)^{i+1}  \left( \prod_{k=1}^n (Y_k-Z_i)  \right) 
	A_{n-1}(Y_1,\ldots,\widehat{Y_i},\ldots,Y_n;Z_1,\ldots,\widehat{Z_i},\ldots,Z_n),
	\end{equation*}
	where $\widehat{Y_i}$ and $\widehat{Z_i}$ means that $Y_i$ and $Z_i$ are omitted, respectively. Using the induction hypothesis, it follows that $A_n(\mathbf{Y};\mathbf{Z})$ is 
	\begin{equation*}
	(-1)^{n-1} \det_{1 \le i, j \le n} \left( \begin{cases} Y_i^j - Z_i^j,  & j < n, \\ 
		\prod_{k=1}^n (Y_k-Z_i), & j=n. 
	\end{cases}  \right)
	\end{equation*}
	For $j \in \{1,2,\ldots,n-1\}$, we multiply the $j$-th column with $(-1)^j e_{n-j}(Y_1,\ldots,Y_n)$ and add it to the last column, where $e_{n-j}$ denotes the \emph{elementary symmetric polynomial} of degree~$n-j$. The entry in the $i$-th row and $n$-th column is then
	\begin{multline*}
		\sum_{j=1}^{n-1} (-1)^j (Y_i^j - Z_i^j) e_{n-j}(Y_1,\ldots,Y_n) + \prod_{k=1}^n (Y_k-Z_i) \\ = 
		\sum_{j=1}^{n-1} (-1)^j (Y_i^j - Z_i^j) e_{n-j}(Y_1,\ldots,Y_n) + \sum_{j=0}^n (-1)^j Z_i^j e_{n-j}(Y_1,\ldots,Y_n) \\ = 
		\sum_{j=1}^{n-1} (-1)^j Y_i^j e_{n-j}(Y_1,\ldots,Y_n) + e_n(Y_1,\ldots,Y_n) + (-1)^n Z_i^n \\ =
		\prod_{k=1}^n (Y_k - Y_i) - e_n(Y_1,\ldots,Y_n) - (-1)^n Y_i^n + e_n(Y_1,\ldots,Y_n) + (-1)^n Z_i^n \\ = 
		(-1)^n \left( Z_i^n - Y_i^n \right), 
	\end{multline*}                            
	and this proves the assertion.                     
\end{proof}

The crucial observation is that 
\begin{equation*}
\left( u  X_j + v  X_i^{-1}  + w  \right) \left(u X_j -  v X_i^{-1}  \right) =
u^2 X_j^2 + u w X_j - v^2 X_i^{-2} - v w X_i^{-1},
\end{equation*}
and so the lemma is applicable to \eqref{eq:genfun_AMT1}; note that for this it was important to multiply with \eqref{Bfactor}: 
Setting $Y_j = u^2 X_j^2 + u w X_j$ and $Z_i = v^2 X_i^{-2} + v w X_i^{-1}$ in the lemma and applying it to \eqref{eq:genfun_AMT1} yields
\begin{equation*}
\prod_{i=1}^{n} X_i^{n-2} \frac{
	\det_{1 \le i,j \le n} \left( \left(u^2 X_i^2 + u w X_i\right)^j - \left(v^2 X_i^{-2} + v w X_i^{-1}\right)^j \right)}
{\prod_{1 \le i < j \le n} (X_j - X_i)}.
\end{equation*}
We divide by \eqref{Bfactor} and eventually obtain the following generating function of arrowed monotone triangles with bottom row $0,2,\ldots,2n-2$:
\begin{equation} 
	\label{bialternant} 
	\prod_{i=1}^{n} X_i^{n-2} (u-v X_i^{-2})^{-1}   
	\frac{\det_{1 \le i,j \le n} \left( \left(u^2 X_i^2 + u w X_i\right)^j - \left(v^2 X_i^{-2} + v w X_i^{-1}\right)^j \right)}{\prod_{1 \le i < j \le n} (X_j - X_i)\left( u - v X_i^{-1} X_j^{-1}  \right)}.
\end{equation} 

\section{\texorpdfstring{Proof of Theorem~\ref{interpret1}}{Proof of Theorem~2.4}}
\label{sec:first} 
The following lemma is an important tool to transform bialternant formulas into Jacobi--Trudi-type formulas; the proof can be found in \cite[Lemma 7.2]{nASMDPP}. The lemma involves 
\emph{complete homogeneous symmetric functions} $h_k$ also of negative degree~$k$, which are defined as follows:
\begin{equation*}
h_k(X_1,\ldots,X_n) \coloneq (-1)^{n+1} \sum_{l_1+\ldots+l_n=k,l_i<0} X_1^{l_1} X_2^{l_2} \cdots X_n^{l_n}.
\end{equation*}
Note that 
$h_{k}(X_1,\ldots,X_n)$ vanishes for $-n+1 \le k \le -1$.
For $k \le -n$, they can be expressed in terms of those with positive degree as 
\begin{multline} 
\label{negative} 
h_k(X_1,\ldots,X_n)
= (-1)^{n+1} (X_1 \cdots X_n)^{-1}  \sum_{l_1+\ldots+l_n=-k-n,l_i \ge 0} X_1^{-l_1} X_2^{-l_2} \cdots X_n^{-l_n} \\
= (-1)^{n+1} (X_1 \cdots X_n)^{-1} h_{-k-n}(X_1^{-1},\ldots,X_n^{-1}).
\end{multline} 
In fact, it is not hard to see that the relation 
\begin{equation*}
	h_k(X_1,\ldots,X_n) = (-1)^{n+1} (X_1 \cdots X_n)^{-1} h_{-k-n}(X_1^{-1},\ldots,X_n^{-1})
\end{equation*}	
is true for any integer $k$.

\begin{lemma}
\label{lem:det} 
Let $f_j(Y)$ be formal Laurent series for $1 \le j \le n$, and define 
\begin{equation*}
f_j[Y_1,\ldots,Y_i]=\sum_{k \in \mathbb{Z}} \langle Y^{k} \rangle f_j(Y) \cdot h_{k-i+1}(Y_1,\ldots,Y_i),
\end{equation*}
where $\langle Y^{k} \rangle f_j(Y)$ denotes the coefficient of $Y^{k}$ in $f_j(Y)$.
Then 
\begin{equation*}
\frac{\det_{1 \le i, j \le n} \left( f_j(Y_i) \right) }{\prod_{1 \le i < j \le n} (Y_j - Y_i)} = \det_{1 \le i, j \le n} \left( f_j[Y_1,\ldots,Y_i] \right).
\end{equation*}
\end{lemma}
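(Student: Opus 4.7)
The plan is to realize the bracket operation $f_j \mapsto f_j[Y_1,\ldots,Y_i]$ as an evaluation against a partial fraction expansion; this exhibits the right-hand matrix as a product $LA$, where $L$ is lower triangular and $A$ is the matrix on the left-hand side, after which the determinant identity follows at once.

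The central ingredient will be the partial fraction identity
$$
h_m(Y_1,\ldots,Y_i) \;=\; \sum_{l=1}^{i} \frac{Y_l^{m+i-1}}{\prod_{1 \le l' \le i,\; l'\ne l}(Y_l - Y_{l'})}, \qquad m \in \mathbb{Z},
$$
where for $m<0$ the left-hand side is interpreted via \eqref{negative}. For $m \ge 0$ this is the classical partial fraction expansion of $\prod_l(1-Y_l t)^{-1}$. For $-i+1 \le m \le -1$ both sides vanish: the left by the paper's convention, and the right by the standard identity $\sum_l Y_l^k / \prod_{l'\ne l}(Y_l-Y_{l'}) = 0$ for $0 \le k \le i-2$. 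For $m \le -i$ one applies the same partial fraction to $\prod_l (1-Y_l^{-1} t)^{-1}$ and transports the result back through the symmetry $h_k(Y_1,\ldots,Y_i) = (-1)^{i+1}(Y_1\cdots Y_i)^{-1} h_{-k-i}(Y_1^{-1},\ldots,Y_i^{-1})$ recorded in \eqref{negative}.

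Substituting this identity into the definition of $f_j[Y_1,\ldots,Y_i]$ and interchanging the order of summation gives
$$
f_j[Y_1,\ldots,Y_i] \;=\; \sum_{l=1}^{i} L_{i,l}\, f_j(Y_l), \qquad L_{i,l} := \frac{1}{\prod_{l' \ne l,\, l' \le i}(Y_l - Y_{l'})}.
$$
Extending $L_{i,l}=0$ for $l>i$, this reads $B = LA$, where $B_{i,j}=f_j[Y_1,\ldots,Y_i]$ and $A_{i,j}=f_j(Y_i)$. The matrix $L$ is lower triangular with diagonal entries $L_{i,i} = 1/\prod_{l'<i}(Y_i - Y_{l'})$, so $\det L = 1/\prod_{1 \le i<j \le n}(Y_j-Y_i)$, and taking determinants of $B = LA$ yields the claim. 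The main obstacle is the uniform justification of the partial fraction identity across all integers $m$; alternatively, one can sidestep the convergence question by invoking the fact that both sides of the claim are multilinear in the $f_j$'s and reducing to the monomial case $f_j(Y)=Y^{m_j}$, where only the monomial-by-monomial form of the partial fraction is required.
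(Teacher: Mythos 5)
Your proof is correct. The paper itself does not prove this lemma --- it only cites \cite[Lemma~7.2]{nASMDPP} --- so there is nothing to compare line by line, but your argument is a clean, self-contained route. The key identity you use, $h_m(Y_1,\ldots,Y_i)=\sum_{l=1}^i Y_l^{m+i-1}/\prod_{l'\ne l}(Y_l-Y_{l'})$ for all $m\in\mathbb{Z}$, identifies $f_j[Y_1,\ldots,Y_i]$ with the classical Newton divided difference of $f_j$ at the nodes $Y_1,\ldots,Y_i$ (which the bracket notation is presumably meant to evoke), and from there the factorization $B=LA$ with $L$ lower triangular of determinant $\prod_{i<j}(Y_j-Y_i)^{-1}$ gives the claim immediately. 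Your three-case verification of the partial fraction identity is complete and consistent with the paper's convention \eqref{negative}: the cases $m\ge 0$, $-i+1\le m\le -1$, and $m\le -i$ exhaust $\mathbb{Z}$, and the sign bookkeeping in the last case ($(-1)^{i+1}(-1)^{i-1}=1$) does work out. The closing remark about reducing to monomials $f_j(Y)=Y^{m_j}$ by multilinearity of both sides in the columns is the right way to dispose of any convergence scruples; in the paper's applications the $f_j$ are Laurent polynomials anyway, and the left-hand side already presupposes that evaluation at $Y_i$ makes sense.
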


We rewrite \eqref{bialternant} as follows:
\begin{equation}
\label{bialternatint1} 
\prod_{i=1}^{n} X_i^{n-1}  
\frac{\det\limits_{1 \le i,j \le n} \left( \frac{\left(u^2 X_i^2 + u w X_i\right)^j - \left(v^2 X_i^{-2} + v w X_i^{-1}\right)^j}{u X_i - v X_i^{-1}} \right)}{\prod\limits_{1 \le i < j \le n} \left( (u X_j + v X_j^{-1}) - (u X_i + v X_i^{-1})\right)}.
\end{equation} 
We aim at applying Lemma~\ref{lem:det} to this bialternant with $Y_i=u X_i + v X_i^{-1}$. For this purpose, we need to express the entries in the $i$-th row of the matrix that underlies the determinant in \eqref{bialternatint1} as formal Laurent series in $Y_i$, which is possible since these entries are invariant under $X_i \mapsto  u^{-1} v X_i^{-1}$. We rewrite the entries as follows:
\begin{equation}
\label{entry}
\frac{\left(u^2 X_i^2 + u w X_i\right)^j - \left(v^2 X_i^{-2} + v w X_i^{-1}\right)^j}{u X_i - v X_i^{-1}} =  
\sum_{k=0}^j \binom{j}{k} w^{j-k} 
\frac{u^{j+k} X_i^{j+k} - v^{j+k} X_i^{-(j+k)}}{u X_i - v X_i^{-1}}.
\end{equation}

The next lemma is useful to transform this further.

\begin{lemma} 
\label{basis} 
For $l \ge 0$, we have 
\begin{equation*}
\frac{u^{l} X^{l} - v^{l} X^{-l}}{u X - v X^{-1}} = \sum_{r=0}^{(l-1)/2} (-uv)^r \binom{l-r-1}{r} (u X + v X^{-1})^{l-2r-1}.
\end{equation*}
\end{lemma}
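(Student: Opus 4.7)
The plan is to prove the identity as a polynomial identity in the two symmetric-like quantities $a := uX$ and $b := vX^{-1}$, where $a+b = uX + vX^{-1}$ and $ab = uv$. With this substitution, the left-hand side becomes the classical expression
\[
\frac{a^l - b^l}{a-b} = \sum_{k=0}^{l-1} a^k b^{l-1-k},
\]
which is symmetric in $a$ and $b$, hence expressible as a polynomial in $s := a+b$ and $p := ab$. Denote this polynomial by $P_l(s,p)$. The claim is then the explicit formula
\[
P_l(s,p) = \sum_{r=0}^{\lfloor (l-1)/2 \rfloor} (-p)^r \binom{l-r-1}{r} s^{l-2r-1}.
\]

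First I would verify the base cases $l=1$ (both sides equal $1$) and $l=2$ (both sides equal $s$). Then I would proceed by induction, using the two-term recurrence
\[
P_l(s,p) = s\,P_{l-1}(s,p) - p\,P_{l-2}(s,p),
\]
which follows immediately from $a^l - b^l = (a+b)(a^{l-1} - b^{l-1}) - ab(a^{l-2} - b^{l-2})$. Substituting the inductive formula for $P_{l-1}$ and $P_{l-2}$ on the right-hand side gives
\[
\sum_{r\ge 0} (-p)^r \binom{l-r-2}{r} s^{l-2r-1} + \sum_{r\ge 1} (-p)^{r} \binom{l-r-2}{r-1} s^{l-2r-1}.
\]
Combining the coefficients of $(-p)^r s^{l-2r-1}$ via Pascal's identity
\[
\binom{l-r-2}{r} + \binom{l-r-2}{r-1} = \binom{l-r-1}{r}
\]
yields the desired formula for $P_l$, completing the induction. (As an alternative, one could read off the coefficients from the generating function $\sum_{l \ge 1} P_l(s,p)\, t^{l-1} = (1 - st + pt^2)^{-1}$ by expanding $\sum_k (st - pt^2)^k$, but the inductive argument is cleanest.)

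Finally, I would substitute back $a = uX$, $b = vX^{-1}$, so $s = uX + vX^{-1}$ and $p = uv$, giving exactly the stated identity. I do not foresee any real obstacle here: the only mildly delicate point is bookkeeping the summation ranges in the inductive step when $l$ switches parity, but this is handled automatically by interpreting $\binom{m}{r} = 0$ for $r > m$ or $r < 0$, so that the upper summation limit $\lfloor (l-1)/2 \rfloor$ takes care of itself.
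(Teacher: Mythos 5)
Your proof is correct, but it takes a genuinely different route from the paper. You substitute $a=uX$, $b=vX^{-1}$, recognize the left-hand side as $\frac{a^l-b^l}{a-b}$, and prove the closed form for this sequence by induction on $l$ using the three-term recurrence $P_l = sP_{l-1}-pP_{l-2}$ together with Pascal's identity $\binom{l-r-2}{r}+\binom{l-r-2}{r-1}=\binom{l-r-1}{r}$; the verification of the recurrence and the base cases $l=1,2$ are all that is needed, and the convention that out-of-range binomials vanish correctly handles the summation limits (the omitted case $l=0$ is trivially $0=0$). The paper instead argues by direct coefficient comparison: it expands the left-hand side as the geometric sum $\sum_{k=0}^{l-1}u^k v^{l-1-k}X^{2k-l+1}$, expands the right-hand side by the binomial theorem, and evaluates the resulting inner sum $\sum_r(-1)^r\binom{l-r-1}{r}\binom{l-2r-1}{k-r}$ via the Chu--Vandermonde identity, showing it equals $1$ for $k\le l-1$ and $0$ otherwise. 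Your inductive argument is more elementary (it avoids the hypergeometric summation entirely) and makes the underlying structure visible -- these are essentially Fibonacci/Dickson-type polynomials in $s=a+b$ and $p=ab$ -- while the paper's computation is self-contained at the level of binomial coefficient manipulation. Both are complete proofs.
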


\begin{proof}
Using the geometric series expansion, the left-hand side can be written as  
$\sum_{k=0}^{l-1} u^k \allowbreak v^{-k+l-1}  X^{2k-l+1}$.
 The right-hand side is equal to 
\begin{multline*} 
 \sum_{r=0}^{(l-1)/2} \sum_{s \ge 0}  (-1)^r \binom{l-r-1}{r} \binom{l-2r-1}{s} u^{r+s}  v^{-r-s+l-1} X^{2r+2s-l+1}  \\ = 
 \sum_{k \le l-1}  u^{k}  v^{-k+l-1} X^{2k-l+1} \sum_{r=0}^{(l-1)/2} (-1)^r \binom{l-r-1}{r} \binom{l-2r-1}{k-r}. 
 \end{multline*} 
 Now, using the Chu--Vandermonde identity in the third step, the inner sum simplifies as follows and the proof is complete:
 \begin{multline*} 
 \sum_{r=0}^{l-k-1} (-1)^r \frac{(l-r-1)!}{r! (k-r)! (l-r-k-1)!}  = 
 \sum_{r=0}^{l-k-1} (-1)^r \binom{k}{r} \binom{l-r-1}{l-r-1-k}   \\ = 
 \sum_{r=0}^{l-k-1} (-1)^{l-1-k} \binom{k}{r} \binom{-k-1}{l-r-1-k} = (-1)^{l-1-k} \binom{-1}{l-1-k} = 
 \begin{cases} 1, & k \le l-1, \\ 0, & k \ge l. \end{cases}
 \end{multline*} 
\end{proof} 
  
We use Lemma~\ref{basis} to rewrite \eqref{entry} as follows:
\begin{multline*} 
\sum_{k=0}^{j} \sum_{r=0}^{(j+k-1)/2}  \binom{j}{k} w^{j-k} (-uv)^r \binom{j+k-r-1}{r} (u X_i + v X_i^{-1})^{j+k-2r-1} \\ = 
\sum_{k,r \ge 0} (-uv)^r w^{2j-k-2r-1} \binom{j}{k-j+2r+1} \binom{k+r}{r} (u X_i + v X_i^{-1})^{k}.
\end{multline*} 
Applying Lemma~\ref{lem:det} to \eqref{bialternatint1} with $Y_i=u X_i + v X_i^{-1}$, we have 
\begin{multline*}
f_j[u X_1+ v X_1^{-1},\ldots,u X_i+ v X_i^{-1}] \\
= 
\sum_{k,r \ge 0} (-uv)^r w^{2j-k-2r-1} \binom{k+r}{r} \binom{j}{2j-k-2r-1}  
h_{k-i+1}(u X_1+ v X_1^{-1},\ldots,u X_i+ v X_i^{-1}) \\
= \sum_{\substack{k,r \in \mathbb{Z}\\2j-k-2r-1 \ge 0}} 
(-u v)^r w^k \binom{2j-k-r-1}{r} \binom{j}{k} 
h_{2j-k-2r-i}(u X_1+ v X_1^{-1},\ldots,u X_i+ v X_i^{-1}).
\end{multline*} 
We set $p=2j-k$ and eliminate $k$. Then we let $q=p-2r$ and eliminate $r$. 
Also exchange the role of $i$ and $j$, which is possible since we are 
interested in the determinant. We obtain
\begin{equation} 
\label{LGV1} 
\sum_{p \ge 1} w^{2i-p} \binom{i}{2i-p} 
\sum_{q \ge 1, 2 \mid (p-q)} 
(-u v)^{(p-q)/2} \binom{(p+q)/2-1}{(p-q)/2}  
h_{q-j}(u X_1+ v X_1^{-1},\ldots,u X_j+ v X_j^{-1}),
\end{equation} 
which we denote by $a_{i,j} $.

Our combinatorial interpretations are based on the Lindstr\"om--Gessel--Viennot lemma \cite{Lin73,GesVie85,GesVie89}; we state a version that is useful for us. 

\begin{lemma} 
\label{LGV} 
Suppose $G$ is a finite directed acyclic edge-weighted graph with weight function~$\W$. For $i \in \{1,2,\ldots,n\}$, let $A_i, E_i$ be sets of vertices of $G$ and assume that weights are also assigned to the vertices in these sets, also indicated by $\W$. Further, let 
${\mathcal P}(A_i,E_j)$ be the generating function of paths from $A_i$ to $E_j$, that is,
\begin{equation*}
	{\mathcal P}(A_i,E_j) = \sum_{\substack{(x,y) \in A_i \times E_j\\\text{$P$ path $x \to y$}}} 
\W(x) \W(y) \prod_{\text{$e \in P$}} \W(e).
\end{equation*}
Then $\det_{1 \le i,j \le n} \left( {\mathcal P}(A_i,E_j) \right)$ is the signed generating function of families of $n$ pairwise nonintersecting paths $(P_1,\ldots,P_n)$ with the property that there exists a permutation $\sigma$ of $\{1,2,\ldots,\allowbreak n\}$ (not necessarily the same for all $n$-tuples) such that $P_i$ is a directed path from a vertex $x_i$ of $A_i$ to a vertex $y_i$ of $E_{\sigma(i)}$. The ``signed'' weight is 
\begin{equation*}
\sgn \sigma \prod_{i=1}^n \W(x_i) \W(y_i) \prod_{e \in P_i} \W(e). 
\end{equation*}
\end{lemma}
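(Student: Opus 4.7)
The plan is to prove this by the classical Lindström--Gessel--Viennot sign-reversing involution. First, I would expand the determinant by the Leibniz formula,
\[
\det_{1 \le i,j \le n}\bigl(\mathcal P(A_i,E_j)\bigr)=\sum_{\sigma \in \mathcal S_n}\sgn\sigma \prod_{i=1}^n \mathcal P(A_i,E_{\sigma(i)}),
\]
and then replace each $\mathcal P(A_i,E_{\sigma(i)})$ by its defining sum over paths. This rewrites the determinant as a signed sum over pairs $(\sigma,(P_1,\ldots,P_n))$ in which each $P_i$ is a directed path from some $x_i\in A_i$ to some $y_i\in E_{\sigma(i)}$, with no non-intersection condition imposed, and the signed weight is $\sgn\sigma\prod_i \W(x_i)\W(y_i)\prod_{e\in P_i}\W(e)$. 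The non-intersecting families already contribute exactly the desired right-hand side, so it suffices to show that the total contribution of the intersecting families is zero.

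To do this, I would construct a weight-preserving, sign-reversing involution on the set of intersecting configurations. Using the assumption that $G$ is finite and acyclic, fix once and for all a topological order on the vertices of $G$. Given an intersecting tuple, let $v$ be the smallest vertex in this order that lies on at least two of the paths, and let $i<j$ be the two smallest indices such that $P_i$ and $P_j$ both pass through $v$. The involution swaps the tails of these two paths at $v$: the new $i$-th path is $P_i[x_i\to v]$ followed by $P_j[v\to y_j]$, and the new $j$-th path is $P_j[x_j\to v]$ followed by $P_i[v\to y_i]$, while all other $P_k$ and their endpoints are unchanged. Under this surgery, the permutation $\sigma$ is replaced by $\sigma\circ (i\,j)$, so its sign flips; on the other hand, the multiset of edges traversed and the multiset of starting and ending vertex weights are preserved, so the unsigned part of the weight is unchanged.

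The key point I expect to be the main obstacle is verifying that this operation is genuinely an involution, i.e., that applying it a second time returns the original configuration. The crucial observation is that the swap does not change, for any vertex $w$, the set of path indices $\{k : w\in P_k\}$, because the multiset of vertices and edges appearing in the union of the two modified paths is identical before and after the swap. Consequently the minimal shared vertex $v$, and the two smallest indices $i<j$ of paths passing through $v$, are determined by data that is invariant under the surgery. Once this is established, intersecting configurations pair up into sign-opposite pairs of equal absolute weight, their total contribution vanishes, and only the non-intersecting families survive, yielding the stated identity.
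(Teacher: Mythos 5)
The paper does not prove this lemma at all---it is quoted as the classical Lindstr\"om--Gessel--Viennot lemma with references---so the only question is whether your argument is a correct rendition of that classical proof, and essentially it is: Leibniz expansion, then a weight-preserving, sign-reversing tail-swapping involution on the configurations with a shared vertex. Your device of fixing a topological order and swapping at the \emph{globally minimal} shared vertex $v$, between the two smallest indices of paths through $v$, is a clean way to make the involution well defined, and it does work: no vertex preceding $v$ in the topological order can lie on either tail (that would create a directed cycle), so the paths restricted to vertices before $v$ are untouched; the multiset of vertices covered by the two modified paths is preserved and, since in a finite acyclic digraph no directed path revisits a vertex, the set of vertices lying on at least two paths is exactly the set of vertices of multiplicity at least two in that multiset, hence is invariant, as is the index set $\{k: v\in P_k\}$. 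Acyclicity also guarantees that the two concatenations at $v$ are again genuine paths, so the swapped configuration is a legitimate term of the expansion with the same unsigned weight and permutation $\sigma\circ(i\,j)$.

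One sentence of your justification is, however, literally false and should be repaired: it is not true that the swap preserves the set $\{k : w\in P_k\}$ for \emph{every} vertex $w$. A vertex lying strictly beyond $v$ on the tail of $P_i$ (and on no other path) belongs to $P_i$ before the swap but to the new $j$-th path afterwards, so its index set changes from $\{i\}$ to $\{j\}$; the multiset identity you invoke controls multiplicities, not the assignment of vertices to labels. What you actually need---and what does follow, as sketched above---is only that the set of vertices shared by at least two paths is invariant (so the minimal shared vertex is again $v$) and that the index set of paths through $v$ is invariant (so the same pair $i<j$ is selected). With that sentence replaced by this weaker, correct invariance statement, your proof is complete and agrees with the standard argument in the cited sources.
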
  

We interpret \eqref{LGV1} as a certain generating function of lattice paths from $(-i,i)$ to 
$(j-1,-j+2)$ as follows, see Figure~\ref{fig:SinglePathFirstInterpretation}:

\begin{itemize}
\item In the region $\{(x,y) \mid x \le 0\}$, the step set is $\{(1,1),(1,0)\}$.
Assuming that $(0,p)$ is the first lattice point on $x=0$, there are $p-i$ steps of type  $(1,1)$ and $2i-p$ steps of type $(1,0)$, so that there are $\binom{i}{2i-p}$ such paths. Steps of type $(1,0)$ are equipped with the weight $w$. 
\item In the region $\{(x,y) \mid  x \ge 0, y \ge 1\}$, the step set $\{(1,-1),(0,-2)\}$, and we aim at reaching the line $y=1$. 
The steps of type $(0,-2)$ are equipped weight $-uv$. Assuming that $(q-1,1)$ is the first lattice point on $y=1$, there are $q-1$ steps of type $(1,-1)$ and $(p-q)/2$ steps of type $(0,-2)$, so there are $\binom{(p+q)/2-1}{(p-q)/2}$ such paths. Note that the special form of the step set implies $2 \mid (p-q)$.
\item 
In the region $\{(x,y) \mid  x \ge 0, y \le 1\}$, the step set is $\{(-1,0),(0,-1)\}$. Horizontal steps with distance $d$ from the line $y=2$ have weight $u X_d+v X_d^{-1}$. The generating function is 
$h_{q-j}(u X_1+ v X_1^{-1},\ldots,u X_j+ v X_j^{-1})$ as the number of $(-1,0)$ steps is $q-j$, while the number of $(0,-1)$ steps is $j-1$ . 
\item Paths starting in $(-i,i)$ are equipped with the additional weight $X_i^{n-1}$. 
\end{itemize} 

\begin{figure}[htb]
	\centering
	\begin{tikzpicture}[scale=.45,baseline=(current bounding box.center)]
		\fill [verylight-gray] (-6.75,-2.75) rectangle (0,9.75);
		\fill [light-gray] (0,-2.75) rectangle (8.75,1);
		
		\draw [help lines,step=1cm,dashed] (-6.75,-2.75) grid (8.75,9.75);
		
		\draw[->,thick] (-6.75,0)--(8.75,0) node[right]{$x$};
		\draw[->,thick] (0,-2.75)--(0,9.75) node[above]{$y$};
		
		\fill (-5,5) circle (5pt);
		\fill (2,-1) circle (5pt);
		\fill (0,8) circle (5pt);
		\fill (5,1) circle (5pt);
		
		\path[decoration=arrows, decorate] (-5,5) --++ (1,1) --++ (1,0) --++ (1,1) --++ (1,1) --++ (1,0) --++ (1,-1) --++ (1,-1) --++ (0,-2) --++ (1,-1) --++ (1,-1) --++ (1,-1) --++ (-1,0) --++ (-1,0)  --++ (0,-1) --++ (0,-1) --++ (-1,0);
		
		\node[below] at (-5,5) {\contour{verylight-gray}{\scriptsize$(-i,i)$}};
		\node[below] at (2.25,-1) {\contour{light-gray}{\scriptsize$(j-1,-j+2)$}};
		\node[above right] at (0,8) {\contour{white}{\scriptsize$(0,p)$}};
		\node[above right] at (5,1) {\contour{white}{\scriptsize$(q-1,1)$}};
		
	\end{tikzpicture}
	\caption{Example of a lattice path interpretation of \eqref{LGV1} with $i=5$, $j=3$, $p=8$ and $q=6$. The steps contribute the factor $-u v w^2 (u X_1+v X_1^{-1})^2 (u X_3 + v X_3^{-1})$ to the weight of the path. In the second region, the path is even.}
	\label{fig:SinglePathFirstInterpretation}
\end{figure}

Applying Lemma~\ref{LGV} shows that $\prod_{i=1}^{n} X_i^{n-1} \det_{1 \le i, j \le n} (a_{i,j})$ is the generating function of such lattice paths that start in $(-1,1),(-2,2),\ldots,(-n,n)$ and end in  $(0,1),(1,0),\ldots,(n-1,-n+2)$, and are nonintersecting in the first and in the third region. In the second region, we can distinguish between odd and even paths, depending on whether they only reach odd or even lattice points, respectively. In this region, Lemma~\ref{LGV} implies that the family of even paths is nonintersecting as well as the family of odd paths. This concludes the proof of Theorem~\ref{interpret1}. 

Our final goal in this section is to relate the nonintersecting lattice paths in Theorem~\ref{interpret1} to cyclically and vertically symmetric lozenge tilings of a hexagon with side lengths $2n+2, 2n, 2n+2, 2n, 2n+2, 2n$ and with a central triangular hole of size $2$, which are known to be equinumerous with 
$(2n+1) \times (2n+1)$ VSASMs.

Due to the symmetries of these tilings, we can decompose the hexagon into six fundamental areas, which can be seen in Figure~\ref{fig:ExampleRhombusTiling}. Figure~\ref{fig:FundamentalArea} shows such a fundamental area~$\mathcal{H}_n$ for $n=5$. Thus, it suffices to consider lozenge tilings of $\mathcal{H}_n$ which are in bijective correspondence with families of nonintersecting lattice paths with the step set $\{(-1,0),(0,1)\}$ starting in $A_i=(2i-1,i-1)$ and ending in $E_i=(i-1,2i-2)$ for $1 \leq i \leq n$. An example of this correspondence is illustrated in Figure~\ref{fig:ExampleRhombusTiling}.

\begin{figure}[htb]
	\centering
	\begin{tikzpicture}[scale=.45]
		
		\foreach \x in {-4,...,5}
		\foreach \y in {0,...,10}
		\coordinate (\x/\y) at ($ (60:\y) + (\x,0) $);
		
		
		\draw (0/0) -- (1/0) -- (1/1) -- (2/1) -- (2/2) -- (3/2) -- (3/3) -- (4/3) -- (4/4) -- (5/4) -- (5/5) -- (1/9) -- (-4/9) -- (-4/8) -- (-3/7) -- (-3/6) -- (-2/5) -- (-2/4) -- (-1/3) -- (-1/2) -- (0/1) -- cycle;
		
		\node at (-2/10) {$n$};
		\node at (5/7) {$n-1$};
		
		
		\draw (0/1) -- (2/1);
		\draw (-1/2) -- (3/2);	
		\draw (-1/3) -- (4/3);	
		\draw (-2/4) -- (5/4);	
		\draw (-2/5) -- (5/5);	
		\draw (-3/6) -- (4/6);	
		\draw (-3/7) -- (3/7);	
		\draw (-4/8) -- (2/8);	
		\draw (-4/9) -- (1/9);

		
		\draw (0/1) -- (1/0);
		\draw (-1/3) -- (1/1);
		\draw (-2/5) -- (2/1);
		\draw (-3/7) -- (2/2);
		\draw (-4/9) -- (3/2);
		\draw (-3/9) -- (3/3);
		\draw (-2/9) -- (4/3);
		\draw (-1/9) -- (4/4);
		\draw (0/9) -- (5/4);

		
		\draw (-3/7) -- (-3/9);
		\draw (-2/5) -- (-2/9);
		\draw (-1/3) -- (-1/9);
		\draw (0/1) -- (0/9);
		\draw (1/1) -- (1/9);
		\draw (2/2) -- (2/8);
		\draw (3/3) -- (3/7);
		\draw (4/4) -- (4/6);

	\end{tikzpicture}
	\caption{Fundamental area $\mathcal{H}_n$}
	\label{fig:FundamentalArea}
\end{figure}

The number of families of nonintersecting lattice paths between the two sets $(A_i)_{1 \leq i \leq n}$ and $(E_i)_{1 \leq i \leq n}$ can be calculated by means of the Lindstr\"om--Gessel--Viennot lemma (Lemma~\ref{LGV}):
\begin{equation}
	\label{unrefined}
	\det_{1 \le i,j \le n} \mathcal{P} (A_i \to E_j) = \det_{1 \le i,j \le n} \left( \binom{i+j-1}{2j-i-1} \right).
\end{equation}

Noting that 
\begin{equation}
	\label{hspecial}  
	\left. h_{k}(X_1,\ldots,X_n) \right|_{(X_1,\ldots,X_n)=X} = X^k \binom{n+k-1}{k},
\end{equation} 
we observe that, when setting $u=v=1, w=-1$ and $X_i=1$, for $i=1,2,\ldots,n$, then \eqref{LGV1} simplifies to 
\begin{equation}
	\label{specialized} 
	\sum_{p \ge 1} (-1)^{2i-p} \binom{i}{2i-p}  \sum_{q \ge 1,2 \mid (p-q)}  2^{q-j} (-1)^{(p-q)/2} \binom{(p+q)/2-1}{(p-q)/2} \binom{q-1}{j-1}. 
\end{equation}
We see that
\begin{equation}\label{eq:binomial_identity}
	\sum_{p \ge 1} (-1)^{2i-p} \binom{i}{2i-p}  \sum_{q \ge 1,2 \mid (p-q)}  2^{q-j} (-1)^{(p-q)/2} \binom{(p+q)/2-1}{(p-q)/2} \binom{q-1}{j-1}
	=  \binom{i+j-1}{2j-i+1},
\end{equation}
which equals the entries of the matrix underlying the determinant in \eqref{unrefined}.

In Section~\ref{ap:bijective_proofs}, we provide a combinatorial proof of \eqref{eq:binomial_identity} in two steps within our model of lattice paths.

\section{\texorpdfstring{Proof of Theorem~\ref{interpret4}}{Proof of Theorem~2.2}}
\label{sec:third} 

In this section, we see that the signed families of lattice paths from Theorem~\ref{interpret1} can be transformed, in a purely combinatorial matter, into a signless interpretation in terms of certain plane partitions. Thus, Theorem~\ref{interpret1} implies Theorem~\ref{interpret4}. In Appendix~\ref{ap:computational_proof}, we present a second proof exploiting another Jacobi--Trudi-type expression for \eqref{bialternant} which differs from the one used in Section~\ref{sec:first}.

A drawback of Theorem~\ref{interpret1} is that the lattice path interpretation also involves signs although the generating function has only non-negative  coefficients, since it is also the generating function of arrowed monotone triangles. We will define a sign-reversing involution on the families of lattice paths from Theorem~\ref{interpret1} to see directly that also their generating function has no negative coefficients. In fact, we will arrive again at the signless interpretation in Theorem~\ref{interpret4}.
To be more precise, we will first have to ``move back'' to possibly intersecting paths in the second and third region, and apply the sign-reversing involution on such paths. This sign-reversing involution will also involve a Gessel--Viennot-type sign-reversing involution in the third region. 

For this purpose, we will first show that \eqref{LGV1} has only non-negative coefficients. In fact, we define a sign-reversing involution that shows that the inner sum of \eqref{LGV1} has no negative coefficients.

Recall that we consider lattice paths from $(0,p)$ to 
$(j-1,-j+2)$, with step set $\{(1,-1),\allowbreak(0,-2)\}$ until we reach the line $y=1$ at $(q-1,1)$ such that $q \le p$, while on and below the line $y=1$ the step set is 
$\{(-1,0),(0,-1)\}$. We then equip steps of type $(0,-2)$ with the weight $-uv$, while steps of type $(-1,0)$ are colored in blue or red, and 
blue and red steps are equipped with the weight $u X_d$ and $v X_d^{-1}$, respectively, where $d$ is the distance from the line $y=2$. All other steps have weight $1$.

\begin{lemma} 
\label{signlessfirst} 
Let $p,j$ be positive integers, then 
\begin{equation*}
\sum_{q \ge 1, 2 \mid (p-q)} 
(-u v)^{(p-q)/2} \binom{(p+q)/2-1}{(p-q)/2}  
h_{q-j}(u X_1+ v X_1^{-1},\ldots,u X_j+ v X_j^{-1})
\end{equation*}
is the generating function of lattice paths from $(0,p)$ to 
$(j-1,-j+2)$ as given above, but without steps of type $(0,-2)$ and without consecutive pairs of horizontal steps with the first step being blue and the second step being red. 
\end{lemma} 

Clearly, forbidding steps of type $(0,-2)$ implies that above the line $y=1$ we go straight from $(0,p)$ to 
$(p-1,1)$, using only steps of type $(1,-1)$. 

\begin{proof}
We define the following sign-reversing involution on the lattice paths from $(0,p)$ to $(j-1,-j+2)$ that do not fall into the class as described in the lemma: 
Let $r$ be the number of $(1,-1)$-steps at the beginning of the lattice path (so that the next step is either $(0,-2)$ or we have already 
reached the line $y=1$). Since there are in total $q-1$ diagonal steps, we know that $r \in \{0,1,\ldots,q-1\}$. We also consider the first 
$r$ steps of the path after the first point on the line $y=1$. Such $r$ steps exist since there are precisely $q-1$ steps in the region $\{(x,y) \mid y \le 1\}$. They are all in the step set $\{(-1,0),(0,-1)\}$.  

We distinguish between two cases. If among the latter $r$ steps, there are two consecutive horizontal steps such that the first is blue and 
the second is red (when traversing the path from $(p,0)$ to $(j-1,-j+2)$), we choose the first pair of such horizontal steps. We delete both of them, and, assuming they were the steps at position $s$ and $s+1$ now counted from the first point on the line $y=1$, 
we replace the diagonal steps at position $s$ and 
$s+1$, counted from the start, by a single $(0,-2)$-step. We adjust the path so that it starts in $(p,0)$ and ends in 
$(j-1,-j+2)$. Note that, by this replacement, the first intersection point of the path with the line $y=1$ is shifted from $(q-1,1)$ to $(q-3,1)$.

If there is no such pair, then the path does not fall into the class as described in the lemma if and only if $r<q-1$, and, consequently, there is a $(0,-2)$-step 
right after the $r$-th diagonal step. In this case, we replace the $(0,-2)$-step by two steps of type $(1,-1)$ and insert a blue horizontal step 
at position $r+1$ and a red horizontal step at position $r+2$, counted from the first point on the line $y=1$. In this case, the first intersection point  with the line $y=1$ is moved from $(q-1,1)$ to $(q+1,1)$. 

This mapping is an involution and clearly sign-reversing: Two consecutive horizontal steps such that a blue step is followed by a red step have in total the weight $u X_d v X_d^{-1} = u v$, and, in a sense, they are replaced by a $(0,-2)$-step, which has weight $-u v$, or vice versa.
\end{proof}

Using Lemma~\ref{signlessfirst}, we are left with families of paths with step sets as described in Theorem~\ref{interpret1}  that 
\begin{itemize} 
\item are nonintersecting in the region $\{(x,y) \mid x \le 0\}$, 
\item have only diagonal steps $(1,-1)$ in the region $\{(x,y) \mid x \ge 0, y \ge 1\}$ (which implies automatically that the paths are nonintersecting in this region), and 
\item consecutive horizontals steps in the region $\{(x,y) \mid x \ge 0, y \le 1\}$ are divided into an initial section of red steps (which may be empty) followed by an ending section of blue steps (which may be empty as well). The paths may still be intersecting in this region at this point.
\end{itemize} 

The lattice point of a maximal section of consecutive horizontal steps that divides the section into two portions of red and blue steps is said to be the \emph{center} of the section. Note that the center can also be the left or right end point of these maximal sections if there are no blue or no red steps, respectively. Now we say that two paths in the region $\{(x,y) \mid  x \ge 0 , y \le 1\}$ touch if they intersect, but their intersections are fully contained in maximal sections of consecutive horizontal steps and none of these intersections contain centers of either paths. Two paths that are intersecting but not touching are said to \emph{intersect strongly}. 

We make the following important observation: Suppose $P_1,P_2$ are two touching paths, and let 
$(-i_1,i_1),(-i_2,i_2)$ be their starting points, respectively, and $(j_1-1,-j_1+2),(j_2-1,-j_2+2)$ their end points. Then $i_1 \le i_2$ if and only if 
$j_1 \le j_2$. This implies that for families where no two paths intersect strongly, the path starting in 
$(-i,i)$ ends in $(i-1,-i+2)$, so that the underlying permutation is the identity, which does not contribute to the sign.

Now we define a sign-reversing involution on such families of paths with at least one pair of paths that is 
intersecting strongly. Among those pairs we chose a canonical one as follows: The starting point of one path $P_1$ should be as rightmost as possible, and among all paths that are intersecting strongly with this path, choose again the path $P_2$ whose starting point is as rightmost as possible. 

We distinguish several cases: If $P_1 \cap P_2$ contains a vertical step, then we select the last such vertical step in $P_1$ and interchange the end portions of the two paths after this step. Thus we can assume that $P_1 \cap P_2$ contains only horizontal steps. Then the involution is provided by Figure~\ref{cases}, which is applied to the last connected component of the intersection. In all cases, the sign of the family of paths changes by a factor of $-1$.
An example of families of paths that do not cancel under this involution is given in Figure~\ref{fig:signless}.

\begin{figure}[htb]
Case~1:
\begin{tikzpicture}[scale=.45,baseline=(current bounding box.center)]
		
		\draw [help lines,step=1cm,dashed] (-0.75,-0.75) grid (13.75,4.75);
		
		\draw[ultra thick] (0,0) --++ (0,2.1) --++ (1,0);
		\draw[ultra thick,myblue] (0,2.1) --++ (1,0) --++ (1,0) --++ (1,0) --++ (1,0) --++ (1,0) --++ 
		(1,0);
		
		\draw[ultra thick, myred] (6,2.1) --++ (1,0) --++ (1,0) --++ (0,1);
		\draw[ultra thick] (8,2.1) --++ (0,1.9);
		
		\draw[ultra thick] (2,0) --++ (0,1.9) --++ (1,0);
		\draw[ultra thick,myblue] (2,1.9) --++ (1,0) --++ (1,0) --++ (1,0) --++ (1,0) --++ (1,0) --++ 
		(1,0) --++ (1,0) --++ (1,0);
		
		\draw[ultra thick, myred] (10,1.9) --++ (1,0) --++ (1,0) --++ (1,0) --++ (0,1);
		\draw[ultra thick] (13,1.9) --++ (0,2.1);
		
\end{tikzpicture} $\longleftrightarrow$
\begin{tikzpicture}[scale=.45,baseline=(current bounding box.center)]
		
		\draw [help lines,step=1cm,dashed] (-0.75,-0.75) grid (13.75,4.75);

		\draw[ultra thick] (0,0) --++ (0,2.1) --++ (1,0);
		\draw[ultra thick,myblue] (0,2.1) --++ (1,0) --++ (1,0) --++ (1,0) --++ (1,0) --++ (1,0) --++ 
		(1,0) --++ (1,0) --++ (1,0) --++ (1,0) --++ (1,0);
		
		\draw[ultra thick, myred] (10,2.1) --++ (1,0) --++ (1,0) --++ (1,0) --++ (0,1);
		\draw[ultra thick] (13,2.1) --++ (0,1.9);
		
		\draw[ultra thick] (2,0) --++ (0,1.9) --++ (1,0);
		\draw[ultra thick,myblue] (2,1.9) --++ (1,0) --++ 
		(1,0) --++ (1,0) --++ (1,0);
		
		\draw[ultra thick, myred] (6,1.9) --++ (1,0) --++ (1,0) --++ (0,1);
		\draw[ultra thick] (8,2.1) --++ (0,1.9);
		
\end{tikzpicture}

\bigskip

Case~2:
\begin{tikzpicture}[scale=.45,baseline=(current bounding box.center)]
		
		\draw [help lines,step=1cm,dashed] (-0.75,-0.75) grid (13.75,4.75);

		\draw[ultra thick] (0,0) --++ (0,2.1) --++ (1,0);
		\draw[ultra thick,myblue] (0,2.1) --++ (1,0) --++ (1,0) --++ (1,0) --++ (1,0) --++ (1,0) --++ 
		(1,0);
		
		\draw[ultra thick, myred] (6,2.1) --++ (1,0) --++ (1,0) --++ (0,1);
		\draw[ultra thick] (8,2.1) --++ (0,1.9);
		
		\draw[ultra thick] (2,0) --++ (0,1.9) --++ (1,0);
		\draw[ultra thick,myblue] (2,1.9) --++ (1,0) --++ (1,0) --++ 
		(1,0) --++ (1,0) --++ (1,0);
		
		\draw[ultra thick, myred] (7,1.9) --++ (1,0) --++ (1,0) --++ (1,0) --++ (1,0) --++ (1,0) --++ (1,0) --++ (0,1);
		\draw[ultra thick] (13,1.9) --++ (0,2.1);
		
\end{tikzpicture} $\longleftrightarrow$
\begin{tikzpicture}[scale=.45,baseline=(current bounding box.center)]
		
		\draw [help lines,step=1cm,dashed] (-0.75,-0.75) grid (13.75,4.75);

		\draw[ultra thick] (0,0) --++ (0,2.1) --++ (1,0);
		\draw[ultra thick,myblue] (0,2.1) --++ (1,0) --++ (1,0) --++ (1,0) --++ (1,0) --++ (1,0) --++ 
		(1,0);
		
		\draw[ultra thick, myred] (6,2.1) --++ (1,0) --++ (1,0) --++ (1,0) --++ (1,0) --++ (1,0) --++ (1,0) --++ (1,0) --++ (0,1);
		\draw[ultra thick] (13,2.1) --++ (0,1.9);
		
		\draw[ultra thick] (2,0) --++ (0,1.9) --++ (1,0);
		\draw[ultra thick,myblue] (2,1.9) --++ (1,0) --++ (1,0) --++ 
		(1,0) --++ (1,0) --++ (1,0);
		
		\draw[ultra thick, myred] (7,1.9) --++ (1,0) --++ (0,1);
		\draw[ultra thick] (8,1.9) --++ (0,2.1);
		
\end{tikzpicture}

\bigskip

Case~3:
\begin{tikzpicture}[scale=.45,baseline=(current bounding box.center)]
		
		\draw [help lines,step=1cm,dashed] (-0.75,-0.75) grid (13.75,4.75);

		\draw[ultra thick] (0,0) --++ (0,2.1) --++ (1,0);
		\draw[ultra thick,myblue] (0,2.1) --++ 
		(1,0);
		
		\draw[ultra thick, myred] (1,2.1) --++ (1,0) --++ (1,0)--++ (1,0) --++ (1,0) --++ (1,0) --++ (1,0) --++ (1,0) --++ (0,1);
		\draw[ultra thick] (8,2.1) --++ (0,1.9);
		
		\draw[ultra thick] (2,0) --++ (0,1.9) --++ (1,0);
		\draw[ultra thick,myblue] (2,1.9) --++ (1,0) --++ (1,0) --++ 
		(1,0) --++ (1,0) --++ (1,0);
		
		\draw[ultra thick, myred] (7,1.9) --++ (1,0) --++ (1,0) --++ (1,0) --++ (1,0) --++ (1,0) --++ (1,0) --++ (0,1);
		\draw[ultra thick] (13,1.9) --++ (0,2.1);
		
\end{tikzpicture} $\longleftrightarrow$
\begin{tikzpicture}[scale=.45,baseline=(current bounding box.center)]
		
		\draw [help lines,step=1cm,dashed] (-0.75,-0.75) grid (13.75,4.75);

		\draw[ultra thick] (0,0) --++ (0,2.1) --++ (1,0);
		\draw[ultra thick,myblue] (0,2.1) --++ (1,0);
		
		\draw[ultra thick, myred] (1,2.1) --++ (1,0) --++ (1,0) --++ (1,0) --++ (1,0) --++ (1,0)  --++ 
		(1,0) --++ (1,0) --++ (1,0) --++ (1,0) --++ (1,0) --++ (1,0) --++ (1,0) --++ (0,1);
		\draw[ultra thick] (13,2.1) --++ (0,1.9);
		
		\draw[ultra thick] (2,0) --++ (0,1.9) --++ (1,0);
		\draw[ultra thick,myblue] (2,1.9) --++ (1,0) --++ (1,0) --++ 
		(1,0) --++ (1,0) --++ (1,0);
		
		\draw[ultra thick, myred] (7,1.9) --++ (1,0) --++ (0,1);
		\draw[ultra thick] (8,1.9) --++ (0,2.1);
		
\end{tikzpicture}
\caption{The different cases of the sign-reversing involution on strongly intersecting paths.}
\label{cases}
\end{figure} 

\begin{figure}[htb]
	\centering
	\begin{tikzpicture}[scale=.45,baseline=(current bounding box.center)]
		\fill [verylight-gray] (-6.75,-4.75) rectangle (0,10.75);
		\fill [light-gray] (0,-4.75) rectangle (10.75,1);
		
		\draw [help lines,step=1cm,dashed] (-6.75,-4.75) grid (10.75,10.75);
		
		\draw[->,thick] (-6.75,0)--(10.75,0) node[right]{$x$};
		\draw[->,thick] (0,-4.75)--(0,10.75) node[above]{$y$};
		
		\fill (-1,1) circle (5pt);
		\fill (-2,2) circle (5pt);
		\fill (-3,3) circle (5pt);
		\fill (-4,4) circle (5pt);
		\fill (-5,5) circle (5pt);
		\fill (-6,6) circle (5pt);
		
		\fill (0,1) circle (5pt);
		\fill (1,0) circle (5pt);
		\fill (2,-1) circle (5pt);
		\fill (3,-2) circle (5pt);
		\fill (4,-3) circle (5pt);
		\fill (5,-4) circle (5pt);
		
		\path[decoration=arrows, decorate] (-1,1) --++ (1,1) --++ (1,-1) --++ (-1,0);
		
		\draw[ultra thick,dashed,myblue] (1,1) --++ (-1,0);
		
		\path[decoration=arrows, decorate] (-2,2) --++ (1,1) --++ (1,1) --++ (1,-1) --++ (1,-1) --++ (1,-1) --++ (-1,0) --++ (0,-1) --++ (-1,0);
		
		\draw[ultra thick,dashed,myblue] (3,1) --++ (-1,0);
		
		\draw[ultra thick,dotted,myred] (2,0) --++ (-1,0);
		
		\path[decoration=arrows, decorate] (-3,3) --++ (1,1) --++ (1,0) --++ (1,1) --++ (1,-1) --++ (1,-1) --++ (1,-1) --++ (1,-1) --++ (0,-1) 
		--++ (-1,0) --++ (-1,0) --++ (0,-1);
		
		\draw[ultra thick,dashed,myblue] (4,0) --++ (-1,0) --++ (-1,0);
		
		\path[decoration=arrows, decorate] (-4,4) --++ (1,1) --++ (1,0) --++ (1,1) --++ (1,1) --++ (1,-1) --++ (1,-1) --++ (1,-1) --++ (1,-1) --++    (1,-1) --++ (1,-1) --++ (0,-1) --++ (-1,0) --++ (0,-1) --++ (-1,0) --++ (-1,0) --++ (0,-1);
		
		\draw[ultra thick,dotted,myred] (6,0) --++ (-1,0);
		\draw[ultra thick,dotted,myred] (5,-1) --++ (-1,0);
		\draw[ultra thick,dashed,myblue] (4,-1) --++ (-1,0);
		
		\path[decoration=arrows, decorate] (-5,5) --++ (1,1) --++ (1,1) --++ (1,1) --++ (1,0) --++ (1,1) --++ (1,-1) --++ (1,-1) --++ (1,-1) --++ (1,-1) --++ (1,-1) --++ (1,-1) --++ (1,-1) --++ (1,-1) --++ (0,-1) --++ (-1,0) --++ (0,-1) --++ (-1,0) --++ (-1,0) --++ (0,-1) --++ (-1,0) --++ (0,-1);
		
		\draw[ultra thick,dashed,myblue] (8,0) --++ (-1,0);
		\draw[ultra thick,dashed,myblue] (7,-1) --++ (-1,0) --++ (-1,0);
		\draw[ultra thick,dashed,myblue] (5,-2) --++ (-1,0);
		
		\path[decoration=arrows, decorate] (-6,6) --++ (1,1) --++ (1,1) --++ (1,1) --++ (1,0) --++ (1,1) --++ (1,0) --++ (1,-1) --++ (1,-1) --++ (1,-1) --++ (1,-1) --++ (1,-1) --++ (1,-1) --++ (1,-1) --++ (1,-1) --++ (1,-1) --++ (0,-1) --++ (0,-1) --++ (-1,0) --++ (0,-1) --++ (-1,0) --++ (-1,0) --++ (0,-1) --++ (-1,0) --++ (0,-1);
		
		\draw[ultra thick,dotted,myred] (9,-1) --++ (-1,0);
		\draw[ultra thick,dotted,myred] (8,-2) --++ (-1,0) --++ (-1,0);
		\draw[ultra thick,dotted,myred] (6,-3) --++ (-1,0);
		
	\end{tikzpicture}
	\caption{\label{fig:signless} This family of lattice paths corresponds to the pair of \CSPP and \RSPP in Section~\ref{sec:MainResults}.}
\end{figure}

Such paths correspond to pairs $(P,Q')$ of \CSPPs of the same shape with at most $n$ rows with the following properties:
\begin{itemize}
	\item The entries of $P$ in the $i$-th row from the bottom are no greater than $2i$.
	\item The entries of $Q'$ in the $i$-th row from the bottom are no greater than $2i-1$ and no less than $\ell + i -1$ 
	if $\ell$ is the length of the $i$-th row.
\end{itemize} 
The entries of $P$ correspond to the heights of the horizontal steps of the paths on and below the line $y=1$. Each odd entry $2i-1$ corresponds to a red step at distance~$i$ from the line $y=2$ and contributes $u X_i$ multiplicatively to the weight, while each even entry $2i$ corresponds to a blue step at distance~$i$ from the line $y=2$ and contributes $v X_i^{-1}$.  The entries of $Q'$ correspond to the diagonal steps, more precisely their distance from the line $y=x+1$, and each of them contributes $w$ to the weight.

Now we subtract $i-1$ from all entries in the $i$-th row from the bottom of $Q'$ and then $j-1$ from all entries in the $j$-th column of $Q'$. We obtain an \RSPP $Q$ with positive entries such that the entries in the $i$-th row from the bottom are no greater than $i$. This establishes the proof of Theorem~\ref{interpret4}.

\section{\texorpdfstring{Bijective proof of Equation~\eqref{eq:binomial_identity}}{Bijective proof of Equation~(4.8)}}\label{ap:bijective_proofs}

In this section, we present a combinatorial proof of \eqref{eq:binomial_identity}:
\begin{equation*}
	\sum_{p \ge 1} (-1)^{2i-p} \binom{i}{2i-p}  \sum_{q \ge 1,2 \mid (p-q)}  2^{q-j} (-1)^{(p-q)/2} \binom{(p+q)/2-1}{(p-q)/2} \binom{q-1}{j-1}
	=  \binom{i+j-1}{2j-i+1}.
\end{equation*}
In particular, we provide combinatorial proofs within our model of lattice paths for the following two identities:
\begin{equation}
	\label{sum1} 
	\sum_{q \ge 1, 2 \mid (p-q)}  2^{q-j} (-1)^{(p-q)/2} \binom{(p+q)/2-1}{(p-q)/2} \binom{q-1}{j-1} = \binom{p+j-1}{p-j}
\end{equation} 
and
\begin{equation}
	\label{sum2} 
	\sum_{p \ge 1} (-1)^{2i-p} \binom{i}{2i-p} \binom{p+j-1}{p-j} = \binom{i+j-1}{2j-i+1}.
\end{equation}
Interestingly, we have to leave the regime of nonintersecting lattice paths for the proof of \eqref{sum2} and have to work with lattice paths that are possibly intersecting instead. This was not the case in \cite{nASMDPP} and it is an open problem whether we can avoid intersecting lattice paths here as well.
\subsection{\texorpdfstring{Combinatorial proof of \eqref{sum1}}{Combinatorial proof of (6.1)}} 
\label{innersection} 

Recall that in our combinatorial model in Section~\ref{sec:first}, the left-hand side of \eqref{sum1} is interpreted as 
\begin{itemize} 
	\item lattice paths from $(0,p)$ to $(j-1,-j+2)$, 
	\item with step set $\{(1,-1),(0,-2)\}$ until we reach the line $y=1$ (at $(q-1,1)$), 
	\item while on and below the line $y=1$, the step set is $\{(-1,0),(0,-1)\}$. 
\end{itemize} 
Steps of type $(0,-2)$ are equipped with the weight~$-1$, while steps of type $(-1,0)$ are equipped with the weight~$2$. We take the latter weight into account by coloring such steps either in blue or in red. All steps different from these steps are colored in green to distinguish them from the uncolored paths that will provide an interpretation for the right-hand side of \eqref{sum1}.
The set of these colored paths is denoted by $A(j,p)$, and we refer to them as the $A$-paths.

The combinatorial model for the right-hand side of \eqref{sum1} are paths from $(0,p)$ to $(2j-1,-j+1)$ with step set 
$\{(1,-1),(0,-1)\}$. 
The set of these uncolored paths is denoted by $B(j,p)$, and we refer to them as the $B$-paths.

{\bf From $\boldsymbol{B(j,p)}$ to $\boldsymbol{A(j,p)}$:} We explain how to transform paths from $B(j,p)$ into paths from $A(j,p)$. It is an inductive process. In each step, we will have a lattice path that starts in $(0,p)$. The initial section will be colored in green (using the step set $\{(1,-1),(0,-2)\}$ of the first region of $A$-paths), while the ending section will be colored in green, red and blue (using the step set $\{(0,-1),(-1,0)\}$ of the second region of $A$-paths). For the middle section, we use the uncolored step set $\{(1,-1),(0,-1)\}$ of $B$-paths.  

When traveling along the path starting at $(0,p)$, we look for the first pair of uncolored steps $(s_1,s_2)$ and perform the following transformations, see also Figure~\ref{fig:RulesBtoA}:

\begin{enumerate}
	\item If $(s_1,s_2)=((1,-1),(1,-1))$, then we color $s_1$ green, contract $s_2$ and add a green $(0,-1)$-step at the end. \label{list:BAfirst}
	\item If $(s_1,s_2)=((1,-1),(0,-1))$, then we color $s_1$ green, replace $s_2$ by an uncolored $(1,-1)$-step and add a blue $(-1,0)$-step at the end. \label{list:BAsecond}
	\item If $(s_1,s_2)=((0,-1),(1,-1))$, then we replace $s_1$ by a green $(1,-1)$-step and add a red $(-1,0)$-step at the end. \label{list:BAthird}
	\item If $(s_1,s_2)=((0,-1),(0,-1))$ we make three copies, two of them with the same sign and a third with the opposite sign. In the first two, we replace $s_1$ by a green $(1,-1)$-step. In both cases, we add a $(-1,0)$-step at the end; in one case it is blue and in the other case it is red. In the copy with the opposite sign, we replace $(s_1,s_2)$ by a $(0,-2)$-step. The sign is taken into account, since $(0,-2)$-steps have sign $-1$. \label{list:BAfourth}
\end{enumerate} 

\begin{figure}[htb]
	\centering
	\begin{minipage}{.33\textwidth}
		\eqref{list:BAfirst}\;\begin{tikzpicture}[scale=.45,baseline=(current bounding box.center)]
			\draw [help lines,step=1cm,dashed] (-.75,-.75) grid (2.75,2.75);
			\path[decoration=arrows, decorate] (0,2) --++ (1,-1) --++ (1,-1);
		\end{tikzpicture} $\mapsto$
		\begin{tikzpicture}[scale=.45,baseline=(current bounding box.center)]
			\draw [help lines,step=1cm,dashed] (-.75,-.75) grid (3.75,4.75);
			\draw[ultra thick, mygreen, ->] (0,4) --++ (1,-1);
			\draw[thick] (1,3) .. controls (2,2) and (3,3) .. (3,2) .. controls (3,1) and (2,2) .. (2,1);
			\draw[ultra thick, mygreen, ->] (2,1) --++ (0,-1);
		\end{tikzpicture}
	\end{minipage}\hfill\begin{minipage}{.33\textwidth}
		\eqref{list:BAsecond}\;\begin{tikzpicture}[scale=.45,baseline=(current bounding box.center)]
			\draw [help lines,step=1cm,dashed] (-.75,-.75) grid (1.75,2.75);
			\path[decoration=arrows, decorate] (0,2) --++ (1,-1) --++ (0,-1);
		\end{tikzpicture} $\mapsto$
		\begin{tikzpicture}[scale=.45,baseline=(current bounding box.center)]
			\draw [help lines,step=1cm,dashed] (-1.75,.25) grid (3.75,5.75);
			\draw[ultra thick, mygreen, ->] (-1,5) --++ (1,-1);
			\draw[ultra thick, ->] (0,4) --++ (1,-1);
			\draw[thick] (1,3) .. controls (2,2) and (3,3) .. (3,2) .. controls (3,1) and (2,2) .. (2,1);
			\draw[ultra thick, myblue, ->] (2,1) --++ (-1,0);
		\end{tikzpicture}
	\end{minipage}\hfill\begin{minipage}{.33\textwidth}
		\eqref{list:BAthird}\;\begin{tikzpicture}[scale=.45,baseline=(current bounding box.center)]
			\draw [help lines,step=1cm,dashed] (-.75,-.75) grid (2.75,2.75);
			\path[decoration=arrows, decorate] (0,2) --++ (0,-1) --++ (1,-1);
		\end{tikzpicture} $\mapsto$
		\begin{tikzpicture}[scale=.45,baseline=(current bounding box.center)]
			\draw [help lines,step=1cm,dashed] (-1.75,.25) grid (3.75,5.75);
			\draw[ultra thick, mygreen, ->] (-1,5) --++ (1,-1);
			\draw[ultra thick, ->] (0,4) --++ (1,-1);
			\draw[thick] (1,3) .. controls (2,2) and (3,3) .. (3,2) .. controls (3,1) and (2,2) .. (2,1);
			\draw[ultra thick, myred, ->] (2,1) --++ (-1,0);
		\end{tikzpicture}
	\end{minipage}\\\bigskip
	\eqref{list:BAfourth}\;\begin{tikzpicture}[scale=.45,baseline=(current bounding box.center)]
		\draw [help lines,step=1cm,dashed] (-.75,-.75) grid (.75,2.75);
		\path[decoration=arrows, decorate] (0,2) --++ (0,-1) --++ (0,-1);
	\end{tikzpicture} $\mapsto$
	\begin{tikzpicture}[scale=.45,baseline=(current bounding box.center)]
		\draw [help lines,step=1cm,dashed] (-.75,.25) grid (3.75,5.75);
		\draw[ultra thick, mygreen, ->] (0,5) --++ (1,-1);
		\draw[ultra thick, ->] (1,4) --++ (0,-1);
		\draw[thick] (1,3) .. controls (2,2) and (3,3) .. (3,2) .. controls (3,1) and (2,2) .. (2,1);
		\draw[ultra thick, myblue, ->] (2,1) --++ (-1,0);
	\end{tikzpicture}$+$
	\begin{tikzpicture}[scale=.45,baseline=(current bounding box.center)]
		\draw [help lines,step=1cm,dashed] (-.75,.25) grid (3.75,5.75);
		\draw[ultra thick, mygreen, ->] (0,5) --++ (1,-1);
		\draw[ultra thick, ->] (1,4) --++ (0,-1);
		\draw[thick] (1,3) .. controls (2,2) and (3,3) .. (3,2) .. controls (3,1) and (2,2) .. (2,1);
		\draw[ultra thick, myred, ->] (2,1) --++ (-1,0);
	\end{tikzpicture}$-$
	\begin{tikzpicture}[scale=.45,baseline=(current bounding box.center)]
		\draw [help lines,step=1cm,dashed] (-.75,-.75) grid (.75,2.75);
		\draw[ultra thick, mygreen, ->] (0,2) --++ (0,-2);
	\end{tikzpicture}
	\caption{Rules for transforming an uncolored $B$-path into a colored $A$-path.}
	\label{fig:RulesBtoA}
\end{figure}

If there is only one uncolored step left, delete it. This step is always a step of type $(1,-1)$ as can be seen as follows: Paths in $B(j,p)$ have $2j-1$ steps of this type, and only Rule~\eqref{list:BAfirst} transforms such steps. Among these $2j-1$ steps, there are $j-1$ pairs of such steps that are transformed using Rule~\eqref{list:BAfirst}, so that there is one such step left in the end. This also shows that we are always in the situation that there is one uncolored step left in the end.

{\bf The procedure is well-defined:} From the rules, it can also be seen that uncolored $(0,-1)$-steps are either transformed into green $(1,-1)$-steps or pairs of them are transformed into green $(0,-2)$-steps. Suppose that there are $2t$ of the second type so that there are $p-j-2t$ of the first type. It follows that there are $p-2t-1$ green steps of type $(1,-1)$ and $t$ green steps of type $(0,-2)$. With these steps we reach indeed the line $y=1$, since
\[
(0,p) + (p-2t-1) \cdot (1,-1) + t \cdot (0,-2) =  (p-2t-1,1). 
\]
Note that we set $t$ such that $p-2t$ corresponds to  $q$ in \eqref{sum1}.

Since none of the rules changes the height of the endpoint of our path except for the deletion of the final uncolored $(1,-1)$-step, the end point of our path lies on the line $y=-j+2$. We create (blue or red) steps of type $(-1,0)$ precisely when uncolored $(0,-1)$ steps are transformed, which happens in Rules~\eqref{list:BAsecond}, \eqref{list:BAthird}, and \eqref{list:BAfourth} using either of the first two options. There are $p-j-2t$ such applications so that the end point lies on the line $x=p-2t-1-(p-j-2t)=j-1$. Therefore, the end point is $(j-1,-j+2)$. An example is given in Figure~\ref{fig:ExampleBtoA}.

\begin{figure}[htb]
	\begin{multline*}
		\begin{tikzpicture}[scale=.45,baseline=(current bounding box.center)]
			\draw [help lines,step=1cm,dashed] (-.75,-1.75) grid (3.75,5.75);
			\path[decoration=arrows, decorate] (0,5) --++ (1,-1) --++ (0,-1) --++ (1,-1) --++ (0,-1) --++ (0,-1) --++ (1,-1);
		\end{tikzpicture}\overset{\eqref{list:BAsecond}}{\longmapsto}
		\begin{tikzpicture}[scale=.45,baseline=(current bounding box.center)]
			\draw [help lines,step=1cm,dashed] (-.75,-1.75) grid (4.75,5.75);
			\draw[ultra thick, ->, mygreen] (0,5) --++ (1,-1);
			\path[decoration=arrows, decorate] (1,4) --++ (1,-1) --++ (1,-1) --++ (0,-1) --++ (0,-1) --++ (1,-1);
			\draw[ultra thick, ->, myblue] (4,-1) --++ (-1,0);
		\end{tikzpicture}\overset{\eqref{list:BAfirst}}{\longmapsto}
		\begin{tikzpicture}[scale=.45,baseline=(current bounding box.center)]
			\draw [help lines,step=1cm,dashed] (-.75,-1.75) grid (3.75,5.75);
			\draw[ultra thick, ->, mygreen] (0,5) --++ (1,-1);
			\draw[ultra thick, ->, mygreen] (1,4) --++ (1,-1);
			\path[decoration=arrows, decorate] (2,3) --++ (0,-1) --++ (0,-1) --++ (1,-1);
			\draw[ultra thick, ->, myblue] (3,0) --++ (-1,0);
			\draw[ultra thick, ->, mygreen] (2,0) --++ (0,-1);
		\end{tikzpicture}\\
		\overset{\eqref{list:BAfourth}}{\longmapsto}
		\left\{ \begin{array}{l}
			\begin{tikzpicture}[scale=.45,baseline=(current bounding box.center)]
				\draw [help lines,step=1cm,dashed] (-.75,-1.75) grid (4.75,5.75);
				\draw[ultra thick, ->, mygreen] (0,5) --++ (1,-1);
				\draw[ultra thick, ->, mygreen] (1,4) --++ (1,-1);
				\draw[ultra thick, ->, mygreen] (2,3) --++ (1,-1);
				\path[decoration=arrows, decorate] (3,2) --++ (0,-1) --++ (1,-1);
				\draw[ultra thick, ->, myblue] (4,0) --++ (-1,0);
				\draw[ultra thick, ->, mygreen] (3,0) --++ (0,-1);
				\draw[ultra thick, ->, myblue] (3,-1) --++ (-1,0);
			\end{tikzpicture}\overset{\eqref{list:BAthird}}{\longmapsto}
			\begin{tikzpicture}[scale=.45,baseline=(current bounding box.center)]
				\draw [help lines,step=1cm,dashed] (-.75,-1.75) grid (5.75,5.75);
				\draw[ultra thick, ->, mygreen] (0,5) --++ (1,-1);
				\draw[ultra thick, ->, mygreen] (1,4) --++ (1,-1);
				\draw[ultra thick, ->, mygreen] (2,3) --++ (1,-1);
				\draw[ultra thick, ->, mygreen] (3,2) --++ (1,-1);
				\path[decoration=arrows, decorate] (4,1) --++ (1,-1);
				\draw[ultra thick, ->, myblue] (5,0) --++ (-1,0);
				\draw[ultra thick, ->, mygreen] (4,0) --++ (0,-1);
				\draw[ultra thick, ->, myblue] (4,-1) --++ (-1,0);
				\draw[ultra thick, ->, myred] (3,-1) --++ (-1,0);
			\end{tikzpicture}\longmapsto
			\begin{tikzpicture}[scale=.45,baseline=(current bounding box.center)]
				\draw [help lines,step=1cm,dashed] (-.75,-.75) grid (4.75,5.75);
				\draw[ultra thick, ->, mygreen] (0,5) --++ (1,-1);
				\draw[ultra thick, ->, mygreen] (1,4) --++ (1,-1);
				\draw[ultra thick, ->, mygreen] (2,3) --++ (1,-1);
				\draw[ultra thick, ->, mygreen] (3,2) --++ (1,-1);
				\draw[ultra thick, ->, myblue] (4,1) --++ (-1,0);
				\draw[ultra thick, ->, mygreen] (3,1) --++ (0,-1);
				\draw[ultra thick, ->, myblue] (3,0) --++ (-1,0);
				\draw[ultra thick, ->, myred] (2,0) --++ (-1,0);
			\end{tikzpicture}\\
			\begin{tikzpicture}[scale=.45,baseline=(current bounding box.center)]
				\draw [help lines,step=1cm,dashed] (-.75,-1.75) grid (4.75,5.75);
				\draw[ultra thick, ->, mygreen] (0,5) --++ (1,-1);
				\draw[ultra thick, ->, mygreen] (1,4) --++ (1,-1);
				\draw[ultra thick, ->, mygreen] (2,3) --++ (1,-1);
				\path[decoration=arrows, decorate] (3,2) --++ (0,-1) --++ (1,-1);
				\draw[ultra thick, ->, myblue] (4,0) --++ (-1,0);
				\draw[ultra thick, ->, mygreen] (3,0) --++ (0,-1);
				\draw[ultra thick, ->, myred] (3,-1) --++ (-1,0);
			\end{tikzpicture}\overset{\eqref{list:BAthird}}{\longmapsto}
			\begin{tikzpicture}[scale=.45,baseline=(current bounding box.center)]
				\draw [help lines,step=1cm,dashed] (-.75,-1.75) grid (5.75,5.75);
				\draw[ultra thick, ->, mygreen] (0,5) --++ (1,-1);
				\draw[ultra thick, ->, mygreen] (1,4) --++ (1,-1);
				\draw[ultra thick, ->, mygreen] (2,3) --++ (1,-1);
				\draw[ultra thick, ->, mygreen] (3,2) --++ (1,-1);
				\path[decoration=arrows, decorate] (4,1) --++ (1,-1);
				\draw[ultra thick, ->, myblue] (5,0) --++ (-1,0);
				\draw[ultra thick, ->, mygreen] (4,0) --++ (0,-1);
				\draw[ultra thick, ->, myred] (4,-1) --++ (-1,0);
				\draw[ultra thick, ->, myred] (3,-1) --++ (-1,0);
			\end{tikzpicture}\longmapsto
			\begin{tikzpicture}[scale=.45,baseline=(current bounding box.center)]
				\draw [help lines,step=1cm,dashed] (-.75,-.75) grid (4.75,5.75);
				\draw[ultra thick, ->, mygreen] (0,5) --++ (1,-1);
				\draw[ultra thick, ->, mygreen] (1,4) --++ (1,-1);
				\draw[ultra thick, ->, mygreen] (2,3) --++ (1,-1);
				\draw[ultra thick, ->, mygreen] (3,2) --++ (1,-1);
				\draw[ultra thick, ->, myblue] (4,1) --++ (-1,0);
				\draw[ultra thick, ->, mygreen] (3,1) --++ (0,-1);
				\draw[ultra thick, ->, myred] (3,0) --++ (-1,0);
				\draw[ultra thick, ->, myred] (2,0) --++ (-1,0);
			\end{tikzpicture}\\
			\begin{tikzpicture}[scale=.45,baseline=(current bounding box.center)]
				\draw [help lines,step=1cm,dashed] (-.75,-1.75) grid (3.75,5.75);
				\draw[ultra thick, ->, mygreen] (0,5) --++ (1,-1);
				\draw[ultra thick, ->, mygreen] (1,4) --++ (1,-1);
				\draw[ultra thick, ->, mygreen] (2,3) --++ (0,-2); 
				\path[decoration=arrows, decorate] (2,1) --++ (1,-1);
				\draw[ultra thick, ->, myblue] (3,0) --++ (-1,0);
				\draw[ultra thick, ->, mygreen] (2,0) --++ (0,-1);
			\end{tikzpicture}\longmapsto
			\begin{tikzpicture}[scale=.45,baseline=(current bounding box.center)]
				\draw [help lines,step=1cm,dashed] (-.75,-.75) grid (2.75,5.75);
				\draw[ultra thick, ->, mygreen] (0,5) --++ (1,-1);
				\draw[ultra thick, ->, mygreen] (1,4) --++ (1,-1);
				\draw[ultra thick, ->, mygreen] (2,3) --++ (0,-2); 
				\draw[ultra thick, ->, myblue] (2,1) --++ (-1,0);
				\draw[ultra thick, ->, mygreen] (1,1) --++ (0,-1);
			\end{tikzpicture}
		\end{array}\right.
	\end{multline*}
	\caption{The transformation of a $B$-path into a family of $A$-paths for $p=5$ and $j=2$.}
	\label{fig:ExampleBtoA}
\end{figure}

{\bf From $\boldsymbol{A(j,p)}$ to $\boldsymbol{B(j,p)}$:}
Conversely, paths from $A(j,p)$ are transformed into paths from $B(j,p)$ as follows: After the first lattice point on the line $y=1$ when coming from $(0,p)$, we add an uncolored step of type $(1,-1)$, so that the new end point is $(j,-j+1)$. From now on, it is again an inductive procedure. In each step, we have a path starting in $(0,p)$. The starting section 
uses the step set of $A(j,p)$ for above the line $y=1$, while the ending section uses the step set of $A(j,p)$ for below the line $y=1$. The middle part of the path uses the step set of $B(j,p)$. The rules are as follows (note that they reverse the procedure of how to transform a $B$-path into an $A$-path):

\begin{enumerate}
	\item If the last step of the beginning section is a $(0,-2)$-step, we replace it by two uncolored $(0,-1)$-steps. \label{list:ABfirst}
	\item If the last step of the beginning section is a $(1,-1)$-step and the last step of the path is a green $(0,-1)$-step, we delete this last step and replace the $(1,-1)$-step by two uncolored $(1,-1)$-steps. \label{list:ABsecond}
	\item If the last step of the beginning section is a $(1,-1)$-step and the last step of the path is a $(-1,0)$ step, we do the following: 
	\begin{enumerate}
		\item If the $(1,-1)$-step is followed by an uncolored $(0,-1)$ step, we replace the $(1,-1)$-step by an uncolored 
		$(0,-1)$ step and delete the last step of the path.\label{list:ABthirda}
		\item If the $(1,-1)$-step is followed by an uncolored $(1,-1)$-step, we distinguish according to the color of the last step (which is by assumption a $(-1,0)$ step):
		\begin{enumerate} \item If the step is red, we replace the green $(1,-1)$-step by an uncolored step of type $(0,-1)$, and delete also the red step. \label{list:ABthirdb1}
			\item If the step is blue, we replace the uncolored $(1,-1)$-step by an uncolored $(0,-1)$-step, uncolor the green $(1,-1)$-step, and delete the blue step. \label{list:ABthirdb2}
		\end{enumerate}
	\end{enumerate} 
\end{enumerate} 
An example is provided in Figure~\ref{fig:ExampleAtoB}.

\begin{figure}[htb]
	\centering
	\begin{tikzpicture}[scale=.45,baseline=(current bounding box.center)]
		\draw [help lines,step=1cm,dashed] (-.75,-1.75) grid (4.75,7.75);
		
		\draw[ultra thick, mygreen, ->] (0,7) --++ (1,-1);
		\draw[ultra thick, mygreen, ->] (1,6) --++ (0,-2);
		\draw[ultra thick, mygreen, ->] (1,4) --++ (1,-1);
		\draw[ultra thick, mygreen, ->] (2,3) --++ (1,-1);
		\draw[ultra thick, mygreen, ->] (3,2) --++ (1,-1);
		\draw[ultra thick, myblue, ->] (4,1) --++ (-1,0);
		\draw[ultra thick, mygreen, ->] (3,1) --++ (0,-1);
		\draw[ultra thick, myred, ->] (3,0) --++ (-1,0);
		\draw[ultra thick, myblue, ->] (2,0) --++ (-1,0);
	\end{tikzpicture}$\longmapsto$
	\begin{tikzpicture}[scale=.45,baseline=(current bounding box.center)]
		\draw [help lines,step=1cm,dashed] (-.75,-1.75) grid (5.75,7.75);
		\draw[ultra thick, mygreen, ->] (0,7) --++ (1,-1);
		\draw[ultra thick, mygreen, ->] (1,6) --++ (0,-2);
		\draw[ultra thick, mygreen, ->] (1,4) --++ (1,-1);
		\draw[ultra thick, mygreen, ->] (2,3) --++ (1,-1);
		\draw[ultra thick, mygreen, ->] (3,2) --++ (1,-1);
		\path[decoration=arrows, decorate] (4,1) --++ (1,-1);
		\draw[ultra thick, myblue, ->] (5,0) --++ (-1,0);
		\draw[ultra thick, mygreen, ->] (4,0) --++ (0,-1);
		\draw[ultra thick, myred, ->] (4,-1) --++ (-1,0);
		\draw[ultra thick, myblue, ->] (3,-1) --++ (-1,0);
	\end{tikzpicture}$\overset{\eqref{list:ABthirdb2}}{\longmapsto}$
	\begin{tikzpicture}[scale=.45,baseline=(current bounding box.center)]
		\draw [help lines,step=1cm,dashed] (-.75,-1.75) grid (4.75,7.75);
		\draw[ultra thick, mygreen, ->] (0,7) --++ (1,-1);
		\draw[ultra thick, mygreen, ->] (1,6) --++ (0,-2);
		\draw[ultra thick, mygreen, ->] (1,4) --++ (1,-1);
		\draw[ultra thick, mygreen, ->] (2,3) --++ (1,-1);
		\path[decoration=arrows, decorate] (3,2) --++ (1,-1) --++ (0,-1);
		\draw[ultra thick, myblue, ->] (4,0) --++ (-1,0);
		\draw[ultra thick, mygreen, ->] (3,0) --++ (0,-1);
		\draw[ultra thick, myred, ->] (3,-1) --++ (-1,0);
	\end{tikzpicture}$\overset{\eqref{list:ABthirdb1}}{\longmapsto}$
	\begin{tikzpicture}[scale=.45,baseline=(current bounding box.center)]
		\draw [help lines,step=1cm,dashed] (-.75,-1.75) grid (4.75,7.75);
		\draw[ultra thick, mygreen, ->] (0,7) --++ (1,-1);
		\draw[ultra thick, mygreen, ->] (1,6) --++ (0,-2);
		\draw[ultra thick, mygreen, ->] (1,4) --++ (1,-1);
		\path[decoration=arrows, decorate] (2,3) --++ (0,-1) --++ (1,-1) --++ (0,-1);
		\draw[ultra thick, myblue, ->] (3,0) --++ (-1,0);
		\draw[ultra thick, mygreen, ->] (2,0) --++ (0,-1);
	\end{tikzpicture}
	$\overset{\eqref{list:ABsecond}}{\longmapsto}$\begin{tikzpicture}[scale=.45,baseline=(current bounding box.center)]
		\draw [help lines,step=1cm,dashed] (-.75,-1.75) grid (4.75,7.75);
		\draw[ultra thick, mygreen, ->] (0,7) --++ (1,-1);
		\draw[ultra thick, mygreen, ->] (1,6) --++ (0,-2);
		\path[decoration=arrows, decorate] (1,4) --++ (1,-1) --++ (1,-1) --++ (0,-1)  --++ (1,-1) --++ (0,-1);
		\draw[ultra thick, myblue, ->] (4,-1) --++ (-1,0);
	\end{tikzpicture}$\overset{\eqref{list:ABfirst}}{\longmapsto}$
	\begin{tikzpicture}[scale=.45,baseline=(current bounding box.center)]
		\draw [help lines,step=1cm,dashed] (-.75,-1.75) grid (4.75,7.75);
		\draw[ultra thick, mygreen, ->] (0,7) --++ (1,-1);
		\path[decoration=arrows, decorate] (1,6) --++ (0,-1) --++ (0,-1) --++ (1,-1) --++ (1,-1) --++ (0,-1)  --++ (1,-1) --++ (0,-1);
		\draw[ultra thick, myblue, ->] (4,-1) --++ (-1,0);
	\end{tikzpicture}$\overset{\eqref{list:ABthirda}}{\longmapsto}$
	\begin{tikzpicture}[scale=.45,baseline=(current bounding box.center)]
		\draw [help lines,step=1cm,dashed] (-.75,-1.75) grid (3.75,7.75);
		
		\path[decoration=arrows, decorate] (0,7) --++ (0,-1) --++ (0,-1) --++ (0,-1) --++ (1,-1) --++ (1,-1) --++ (0,-1) --++ (1,-1) --++ (0,-1);
	\end{tikzpicture}
	\caption{The transformation of an $A$-path into a $B$-path for $p=7$ and $j=2$.}
	\label{fig:ExampleAtoB}
\end{figure}

{\bf The procedure is well-defined:}
It remains to show the following:
\begin{itemize}
	\item If the last step of the beginning section is of type $(1,-1)$ then the ending section is not empty.
	\item The end point of the final path is $(2j-1,-j+1)$. 
\end{itemize}  

To show the first assertion, note that in each step the number of green $(1,-1)$-steps is just the number of blue or red $(-1,0)$-step plus the number of green $(0,-1)$-steps. This is obviously true for paths in $A(j,p)$ and remains to be true in every step by the nature of the rules: Whenever a green $(1,-1)$-step is transformed, we delete precisely one step of the other type. 

Concerning the change of the end point, note that the end point does not change, except when we are in case~\eqref{list:ABsecond}, in which case the end point is pushed horizontally by one unit to the right. We have such a case~\eqref{list:ABsecond} for each green $(0,-1)$-step in the original path in $A(j,p)$. There are $j-1$ such steps, so that the end point is 
$(j,-j+1)+(j-1) \cdot (1,0)=(2j-1,-j+1)$ as required.

For this purpose, we will first show that \eqref{LGV1} has only non-negative coefficients. In fact, our combinatorial proof 
of \eqref{sum1} implies  a sign-reversing involution that shows that the inner sum of \eqref{LGV1} has no negative coefficients. To see this, we introduce essentially the weights from Theorem~\ref{interpret1} in the combinatorial interpretation of the left-hand side of \eqref{sum1} to obtain 
a combinatorial interpretation of the inner sum of \eqref{LGV1} 

\begin{remark}
	The combinatorial proof of \eqref{sum1} implies a sign-reversing involution that shows that the inner sum of \eqref{sum1} has no negative coefficients. The proof of Lemma~\ref{signlessfirst} provides this involution with more general weights. In fact, two $A$-paths that are paired off under this involution are mapped to the same $B$-path, but have opposite signs as $A$-paths. 
\end{remark}

\subsection{\texorpdfstring{Combinatorial proof of \eqref{sum2}}{Combinatorial proof of (6.2)}}
\label{outersection}

Recall that we interpret the left-hand side of \eqref{sum2} as paths from $(2j-1,-j+1)$ to $(-i,i)$. For $x \ge 0$, 
we allow steps of type $(-1,1)$ and $(0,1)$, and, for $x < 0$, we allow steps of type $(-1,-1)$ and $(-1,0)$. Steps of type $(-1,0)$ are equipped with the weight $-1$. Such a path must contain precisely one lattice point, say $(0,p)$, 
on the line $x=0$. We reflect the portion of the path that is to the left of the $y$-axis along this axis. 

This results in paths from $(2j-1,-j+1)$ to $(i,i)$ that touch the line $x=0$ precisely once, namely in $(0,p)$. 
From  $(2j-1,-j+1)$ to $(0,p)$, we allow steps of type $(-1,1)$ and $(0,1)$, and from $(0,p)$ to $(i,i)$, we allow steps of type $(1,-1)$ and $(1,0)$. Every step of type $(1,0)$ is equipped with the weight~$-1$. An example is provided in Figure~\ref{fig:ExamplePathForSignReversing}.

\begin{figure}[htb]
	\centering
	\begin{tikzpicture}[scale=.45,baseline=(current bounding box.center)]
		\draw [help lines,step=1cm,dashed] (0,-2.75) grid (5.75,7.75);
		
		\draw[->,thick] (0,0)--(5.75,0) node[right]{$x$};
		\draw[->,thick] (0,-2.75)--(0,7.75) node[above]{$y$};
		
		\fill (5,-2) circle (5pt);
		\fill (0,6) circle (5pt);
		\fill (5,5) circle (5pt);
		
		\path[decoration=arrows, decorate] (5,-2) --++ (-1,1) --++ (0,1) --++ (0,1) --++ (-1,1) --++ (-1,1) --++ (-1,1) --++ (0,1) --++ (-1,1) --++ (1,0) --++ (1,0) --++ (1,-1) --++ (1,0) --++ (1,0);
	\end{tikzpicture}
	
	\caption{Example of the lattice path interpretation of the left-hand side of \eqref{sum2} for $i=5$, $j=3$ and $p=6$. The weight of the path is $(-1)^4=1$.}
	\label{fig:ExamplePathForSignReversing}
\end{figure}

Next, we construct a sign-reversing involution on a subset of these paths. For this purpose, observe that the point $(0,p)$ divides the path into two subpaths, an upper path, that ends in $(i,i)$, and a lower path, that starts in $(2j-1,-j+1)$. Seen as lattice paths, these two paths might coincide in some diagonal 
steps (in the initial section of the upper path and the ending section of the lower path): Let $k\in\{0,\dots,i\}$ be the largest integer such that the vertices $(0,p)$, $(1,p-1)$, \dots, $(k,p-k)$ lie both on the upper and the lower path. The point $P=(k,p-k)$ is then the rightmost intersection point of these two paths. If $k < i$, we perform a transformation on the paths which we depict as follows:

\begin{center}
	\begin{tikzpicture}[scale=.45,baseline=(current bounding box.center)]
		\draw [help lines,step=1cm,dashed] (0.25,0.25) grid (3.75,4.75);
		
		\fill (2,3) circle (5pt);
		\node[above right] at (2,3) {\contour{white}{$P$}};
		
		\draw[ultra thick] (1,4) -- (2,3) -- (3,2);	
		\draw[ultra thick] (2,3) -- (2,2);	
		\draw[ultra thick,dashed] (2,1) -- (2,2) -- (3,1);	
	\end{tikzpicture}
	$\quad\longleftrightarrow\quad$
	\begin{tikzpicture}[scale=.45,baseline=(current bounding box.center)]
		\draw [help lines,step=1cm,dashed] (0.25,0.25) grid (3.75,4.75);
		
		\fill (2,2) circle (5pt);
		\node[above right] at (2,2) {\contour{white}{$P$}};
		
		\draw[ultra thick] (1,3) -- (2,2) -- (3,2);	
		\draw[ultra thick,dashed] (2,1) -- (2,2) -- (3,1);	
	\end{tikzpicture}
\end{center}

Depending on whether the upper path has a diagonal or a horizontal step after it has reached $P$, we delete the vertical step of the lower path incident to $P$ and replace the diagonal step of the upper 
path by the horizontal step
or insert a vertical step to the lower path and replace the horizontal step of the upper path by a diagonal step, respectively. This transformation changes the height of the intersection point~$P$ and therefore alters the point where the path touches the line $x=0$. The upper path either gains or loses a horizontal step which changes the weight of the path. Figure~\ref{fig:SignReversing} illustrates one instance of this sign-reversing involution for $i=4$ and $j=3$.

\begin{figure}[htb]
	\centering
	\begin{tikzpicture}[scale=.45,baseline=(current bounding box.center)]
		\draw [help lines,step=1cm,dashed] (0,-2.75) grid (5.75,7.75);
		
		\draw[->,thick] (0,0)--(5.75,0) node[right]{$x$};
		\draw[->,thick] (0,-2.75)--(0,7.75) node[above]{$y$};
		
		\fill (5,-2) circle (5pt);
		\fill (0,7) circle (5pt);
		\fill (4,4) circle (5pt);
		\fill (2,5) circle (5pt);
		\node[above right] at (2,5) {\contour{white}{$P$}};
		\node[left] at (0,7) {\contour{white}{$p+1$}};
		
		\draw[ultra thick] (5,-2) --++ (0,1) --++ (-2,2) --++ (0,2) --++ (-1,1) --++ (0,1) --++ (-2,2);
		\draw[ultra thick] (2,5) --++ (1,-1) --++ (1,0);
	\end{tikzpicture}
	$\quad\longleftrightarrow\quad$
	\begin{tikzpicture}[scale=.45,baseline=(current bounding box.center)]
		\draw [help lines,step=1cm,dashed] (0,-2.75) grid (5.75,7.75);
		
		\draw[->,thick] (0,0)--(5.75,0) node[right]{$x$};
		\draw[->,thick] (0,-2.75)--(0,7.75) node[above]{$y$};
		
		\fill (5,-2) circle (5pt);
		\fill (0,6) circle (5pt);
		\fill (4,4) circle (5pt);
		\fill (2,4) circle (5pt);
		\node[above right] at (2,4) {\contour{white}{$P$}};
		\node[left] at (0,6) {\contour{white}{$p$}};
		
		\draw[ultra thick] (5,-2) --++ (0,1) --++ (-2,2) --++ (0,2) --++ (-3,3);
		\draw[ultra thick] (2,4) --++ (2,0);
	\end{tikzpicture}
	
	\caption{Sign-reversing involution for $i=4$ and $j=3$.}
	\label{fig:SignReversing}
\end{figure}

We leave the path unchanged only if the rightmost intersection point~$P$ is $(i,i)$. In this case, the path touches the line $x=0$ in the point $(0,2i)$ and the upper path as well as the lower path coincide in the $i$ diagonal steps from $(0,2i)$ to $(i,i)$. We delete those diagonal steps, which leaves us paths from $(2j-1,-j+1)$ to $(i,i)$ with steps of type $(-1,1)$ and $(0,1)$. These paths are enumerated by $\binom{i+j-1}{2j-i+1}$.

\section{Further types of families of lattice paths}
\label{sec:further_lattice_paths}
Theorem~\ref{interpret1} presents a family of lattice paths with the same generating function as arrowed monotone triangles with bottom row $0,2,\ldots,2n-2$. In this section, we provide two other families of lattice paths with the same property, the second of which comprises even nonintersecting lattice paths.

\subsection{The second type of families of lattice paths} 

\begin{theorem} 
	\label{interpret2} 
	For $n \ge 1$, the generating function of arrowed monotone triangles with bottom row 
	$0,2,\ldots,2n-2$ is equal to the signed generating function of $n$ lattice paths with starting points
	$(1,1),(2,2),\ldots,(n,n)$ and end points $(2,1),(2,0),\ldots,(2,-n+2)$ as follows:
	\begin{itemize} 
		\item In the region $\{(x,y) \mid y \ge 1\}$, the step set is $\{(1,0),(0,-1)\}$ and horizontal steps with distance $d$ from the $x$-axis are equipped with the weight $u X_d + w + v X_d^{-1}$. 
		\item In the region $\{(x,y) \mid y \le 1\}$, the step set is $\{(-1,-1),(-2,-2),(-2,-1)\}$. Steps of type $(-1,-1)$ are equipped with the weight $-w$, while steps of type $(-2,-2)$ are equipped with the weight $-u v$.  
	\end{itemize} 
	The paths are nonintersecting in the region $\{(x,y) \mid y \ge 1\}$. In the region $\{(x,y) \mid y \le 1\}$, there may be intersections, however, no two steps of different paths may have a common end point. 
	
	The weight of a family is $\prod_{i=1}^{n} X_i^{n-1}$ multiplied by the product of the weights of all its steps where the weight of a step is $1$ if it has not been specified. 
	Let $\sigma$ be the permutation so that the $i$-th starting point is connected to the $\sigma(i)$-th end point, then the sign of the family is $\sgn \sigma$.
\end{theorem}

\begin{figure}[htb]
	\centering
	\begin{tikzpicture}[scale=.45,baseline=(current bounding box.center)]
		\fill [light-gray] (0,-4.75) rectangle (8.75,1);
		
		\draw [help lines,step=1cm,dashed] (0,-4.75) grid (8.75,6.75);
		
		\draw[->,thick] (0,0)--(8.75,0) node[right]{$x$};
		\draw[->,thick] (0,-4.75)--(0,6.75) node[above]{$y$};
		
		\fill (2,1) circle (5pt);
		\fill (2,0) circle (5pt);
		\fill (2,-1) circle (5pt);
		\fill (2,-2) circle (5pt);
		\fill (2,-3) circle (5pt);
		\fill (2,-4) circle (5pt);
		
		\fill (1,1) circle (5pt);
		\fill (2,2) circle (5pt);
		\fill (3,3) circle (5pt);
		\fill (4,4) circle (5pt);
		\fill (5,5) circle (5pt);
		\fill (6,6) circle (5pt);

		\path[decoration=arrows, decorate] (1,1) --++ (1,0);
		
		\path[decoration=arrows, decorate] (2,2) --++ (1,0) --++ (0,-1) --++ (-1,-1);
		
		\path[decoration=arrows, decorate] (3,3) --++ (1,0) --++ (0,-1) --++ (1,0) --++ (0,-1) --++ (-2,-2) --++ (-1,-1);
		
		\path[decoration=arrows, decorate] (4,4) --++ (1,0) --++ (0,-1) --++ (1,0) --++ (0,-1) --++ (0,-1) --++ (-2,-1) --++ (-2,-1);
		
		\path[decoration=arrows, decorate]  (5,5) --++ (1,0) --++ (1,0) --++ (0,-1) --++ (0,-1) --++ (0,-1) --++ (0,-1) --++ (-1,-1);
		\draw[ultra thick,->]  (5.95,0.05) --++ (-2,-2);
		\path[decoration=arrows, decorate]  (4,-2) --++ (-2,-1);
		
		\path[decoration=arrows, decorate]  (6,6) --++ (1,0) --++ (1,0) --++ (0,-1) --++ (0,-1) --++ (0,-1) --++ (0,-1) --++ (0,-1) --++ (-1,-1) --++ (-2,-1);
		\draw[ultra thick,->] (5.05,-1.05) --++ (-2,-2);
		\path[decoration=arrows, decorate] (3,-3) --++ (-1,-1);
		
		\draw[ultra thick,dashed,myred] (1,1) --++ (1,0);
		
		\draw[ultra thick,dotted,mygreen] (2,2) --++ (1,0) --++ (0,-1) --++ (-1,-1);
		
		\draw[ultra thick,dashed,myred] (3,3) --++ (1,0) --++ (0,-1) --++ (1,0) --++ (0,-1) --++ (-2,-2) --++ (-1,-1);
		
		\draw[ultra thick,dotted,mygreen] (4,4) --++ (1,0) --++ (0,-1) --++ (1,0) --++ (0,-1) --++ (0,-1) --++ (-2,-1) --++ (-2,-1);
		
		\draw[ultra thick,dashed,myred]  (5,5) --++ (1,0) --++ (1,0) --++ (0,-1) --++ (0,-1) --++ (0,-1) --++ (0,-1) --++ (-1,-1);
		\draw[ultra thick,dashed,myred]  (5.95,0.05) --++ (-2,-2);
		\draw[ultra thick,dashed,myred]  (4,-2) --++ (-2,-1);
		
		\draw[ultra thick,dotted,mygreen]  (6,6) --++ (1,0) --++ (1,0) --++ (0,-1) --++ (0,-1) --++ (0,-1) --++ (0,-1) --++ (0,-1) --++ (-1,-1) --++ (-2,-1);
		\draw[ultra thick,dotted,mygreen] (5.05,-1.05) --++ (-2,-2);
		\draw[ultra thick,dotted,mygreen] (3,-3) --++ (-1,-1);

	\end{tikzpicture}
	
	\caption{Example of the lattice paths in Theorem~\ref{interpret2} for $n=6$. The paths are drawn in alternating colors for a better distinction.}
	\label{fig:ExampleSecondInterpretation}
\end{figure}

An example is provided in Figure~\ref{fig:ExampleSecondInterpretation}. Its weight is 
\begin{multline*}
	\sgn \sigma (-u v)^3 (-w)^5  X_1^5 X_2^5 X_3^5 X_4^5 X_5^5 X_6^5 (u X_1+ w+ v X_1^{-1}) (u X_2+ w+ v X_2^{-1})^2\\
	\times (u X_3+ w+ v X_3^{-1})^2 (u X_4+ w+ v X_4^{-1}) (u X_5+ w+ v X_5^{-1})^2 (u X_6 + v X_6^{-1})^2
\end{multline*}
with the associated permutation $\sigma = (1\;2\;4\;3\;5\;6)$.

Again we list all families of paths from Theorem~\ref{interpret2} for the case $n=2$ along with their weights up to the overall factor $X_1 X_2$.

\begin{center} 
	\begin{tikzpicture}[scale=.45,baseline=(current bounding box.center)]
		\fill [light-gray] (0,-0.75) rectangle (4.75,1);
		
		\draw [help lines,step=1cm,dashed] (0,-0.75) grid (4.75,3.75);
		
		\draw[->,thick] (0,0)--(4.75,0) node[right]{\tiny$x$};
		\draw[->,thick] (0,-0.75)--(0,3.75) node[above]{\tiny$y$};
		
		\fill (2,1) circle (5pt);
		\fill (2,0) circle (5pt);
		
		\fill (1,1) circle (5pt);
		\fill (2,2) circle (5pt);
		
		\path[decoration=arrows, decorate] (1,1) --++ (1,0);
		
		\path[decoration=arrows, decorate] (2,2) --++ (1,0) --++ (0,-1) --++ (-1,-1);

		\draw[ultra thick,dashed,myred] (1,1) --++ (1,0);
		
		\draw[ultra thick,dotted,mygreen] (2,2) --++ (1,0) --++ (0,-1) --++ (-1,-1);
		
		\draw (2,-0.75) node[below]{\tiny $- w (u X_1 + w + v X_1^{-1})(u X_2 + w + v X_2^{-1})$};
		
	\end{tikzpicture}
	\begin{tikzpicture}[scale=.45,baseline=(current bounding box.center)]
		\fill [light-gray] (0,-0.75) rectangle (4.75,1);
		
		\draw [help lines,step=1cm,dashed] (0,-0.75) grid (4.75,3.75);
		
		\draw[->,thick] (0,0)--(4.75,0) node[right]{\tiny$x$};
		\draw[->,thick] (0,-0.75)--(0,3.75) node[above]{\tiny$y$};
		
		\fill (2,1) circle (5pt);
		\fill (2,0) circle (5pt);
		
		\fill (1,1) circle (5pt);
		\fill (2,2) circle (5pt);
		
		\path[decoration=arrows, decorate] (1,1) --++ (1,0);
		
		\path[decoration=arrows, decorate] (2,2) --++ (1,0) --++ (0,-1) --++ (1,0) --++ (-2,-1);

		\draw[ultra thick,dashed,myred] (1,1) --++ (1,0);
		
		\draw[ultra thick,dotted,mygreen] (2,2) --++ (1,0) --++ (0,-1) --++ (1,0) --++ (-2,-1);
		
		\draw (2,-0.75) node[below]{\tiny $(u X_1 + w + v X_1^{-1})^2(u X_2 + w + v X_2^{-1})$};
		
	\end{tikzpicture}
	\begin{tikzpicture}[scale=.45,baseline=(current bounding box.center)]
		\fill [light-gray] (0,-0.75) rectangle (4.75,1);
		
		\draw [help lines,step=1cm,dashed] (0,-0.75) grid (4.75,3.75);
		
		\draw[->,thick] (0,0)--(4.75,0) node[right]{\tiny$x$};
		\draw[->,thick] (0,-0.75)--(0,3.75) node[above]{\tiny$y$};
		
		\fill (2,1) circle (5pt);
		\fill (2,0) circle (5pt);
		
		\fill (1,1) circle (5pt);
		\fill (2,2) circle (5pt);
		
		\path[decoration=arrows, decorate] (1,1) --++ (1,0);
		
		\path[decoration=arrows, decorate] (2,2) --++ (1,0) --++ (1,0) --++ (0,-1) --++ (-2,-1);

		\draw[ultra thick,dashed,myred] (1,1) --++ (1,0);
		
		\draw[ultra thick,dotted,mygreen] (2,2) --++ (1,0) --++ (1,0) --++ (0,-1) --++ (-2,-1);
		
		\draw (2,-0.75) node[below]{\tiny $(u X_1 + w + v X_1^{-1})(u X_2 + w + v X_2^{-1})^2$};
	\end{tikzpicture}
\end{center}

\begin{proof}[Proof of Theorem~\ref{interpret2}]
	In the determinant in \eqref{bialternant}, we add $t$ times the $(n-j)$-th column to the $(n-j+1)$-st column, for $j=1,\ldots,n-1$, in this order. We repeat this for $j=1,\ldots,n-2$, then for 
	$j=1,\ldots,n-3$, etc. We obtain 
	\begin{multline*} 
		\prod_{i=1}^{n} X_i^{n-2} (u-v X_i^{-2})^{-1}\\
		\times
		\frac{\det\limits_{1 \le i,j \le n} \left( \left(u^2 X_i^2 + u w X_i+t \right)^{j-1} \left(u^2 X_i^2 + u w X_i \right) - \left(v^2 X_i^{-2} + v w X_i^{-1}+t\right)^{j-1} \left(v^2 X_i^{-2} + v w X_i^{-1}\right) \right)}{\prod\limits_{1 \le i < j \le n} (X_j - X_i)\left( u - v X_i^{-1} X_j^{-1}  \right)}.
	\end{multline*}
	Now, for $j=1$, we can replace $\left(u^2 X_i^2 + u w X_i \right)$ and $\left(v^2 X_i^{-2} + v w X_i^{-1}\right)$ by $\left(u^2 X_i^2 + u w X_i+t \right)$ and $\left(v^2 X_i^{-2} + v w X_i^{-1}+t\right)$, respectively, as the $t$ cancels in this case. Then we add $t$ times the $j$-th column to the 
	$(j+1)$-st column, for $j=1,2,\ldots,n-1$, in this order. The result is
	\begin{equation*}
	\prod_{i=1}^{n} X_i^{n-2} (u-v X_i^{-2})^{-1}  
	\frac{\det_{1 \le i,j \le n} \left( \left(u^2 X_i^2 + u w X_i+t \right)^{j}  - \left(v^2 X_i^{-2} + v w X_i^{-1}+t\right)^{j} \right)}{\prod_{1 \le i < j \le n} (X_j - X_i)\left( u - v X_i^{-1} X_j^{-1}  \right)}.
	\end{equation*}
	Setting $t=u v$, we obtain after some further modifications 
	\begin{equation*}
	\prod_{i=1}^{n} X_i^{n-1} 
	\frac{\det_{1 \le i,j \le n} \left( (u X_i + w + v X_i^{-1})^{j} 
		\frac{u^j X_i^j- v^j X_i^{-j}}{u X_i - v X_i^{-1}} \right)}{\prod_{1 \le i < j \le n} (X_j - X_i)\left( u - v X_i^{-1} X_j^{-1}  \right)}.
	\end{equation*}
	We will apply Lemma~\ref{lem:det} with $Y_i=u X_i +w + v X_i^{-1}$. We need to express the entries in the $i$-th row in terms of formal Laurent series in $Y_i$. By Lemma~\ref{basis}, this entry is 
	\begin{multline*} 
		(u X_i + w + v X_i^{-1})^{j} \sum_{r=0}^{(j-1)/2} (-uv)^r \binom{j-r-1}{r} (u X_i + v X_i^{-1})^{j-2r-1} \\
		=(u X_i + w + v X_i^{-1})^{j} \sum_{r=0}^{(j-1)/2} (-uv)^r \binom{j-r-1}{r} (u X_i + v X_i^{-1}+w-w)^{j-2r-1} 
		\\
		=\sum_{r=0}^{(j-1)/2} \sum_{s \ge 0} (-uv)^r (-w)^{j-2r-1-s} 
		\binom{j-r-1}{r} \binom{j-2r-1}{s} (u X_i +w + v X_i^{-1})^{j+s}.
	\end{multline*}  
	Using Lemma~\ref{lem:det} and setting $s=t-j-1$, the expression in \eqref{bialternant} is equal to  
	\begin{equation}
		\label{second}
		\prod_{i=1}^n X_i^{n-1} 
		\det_{1 \le i,j \le n} \left( \sum_{t\in\mathbb{Z}} 
		c_{t,j}(u,v,w) 
		h_{t-i}(u X_1 + w + v X_1^{-1},\ldots,u X_i + w + v X_i^{-1}) \right),
	\end{equation} 
	with $c_{t,j}(u,v,w)$ defined as
	\begin{multline} 
		\label{def_ctj}
		\sum_{r=0}^{(j-1)/2}  (-uv)^r (-w)^{2j-2r-t} 
		\binom{j-r-1}{r} \binom{j-2r-1}{t-j-1} \\
		= \sum_{r=0}^{(j-1)/2} (-uv)^r (-w)^{2j-2r-t} \binom{j-r-1}{r,2j-2r-t,t-j-1}.
	\end{multline}
	We claim that $c_{t,j}(u,v,w)$ is the generating function of lattice paths from $(0,0)$ to $(t-2,j-1)$ with 
	step set $\{(1,1),(2,2),(2,1)\}$ with respect to the weight 
	\begin{equation*}
	(-uv)^{\# (2,2)\text{-steps}}(-w)^{\# (1,1)\text{-steps}}.
	\end{equation*}
	Indeed, suppose that there are $r$ steps of type $(2,2)$, $x$ steps of type $(1,1)$ and $y$ steps of type $(2,1)$. Then 
	\begin{equation*}
	r \cdot (2,2) + x \cdot (1,1) + y \cdot (2,1) = (t-2,j-1),
	\end{equation*}
	which implies $x=2j-2r-t$ and $y=t-j-1$. Therefore, the number of such paths is indeed $\binom{j-r-1}{r,2j-2r-t,t-j-1}$. From Lemma~\ref{LGV}, it follows that the matrix underlying the determinant in \eqref{second} has the following combinatorial interpretation, see Figure~\ref{fig:ExampleSecondInterpretation}: We consider lattice paths from $(1,1),(2,2),\ldots,(n,n)$ to $(2,1),(2,0),\ldots,(2,-n+2)$ as follows:
	\begin{itemize} 
		\item In the region $\{(x,y) \mid y \ge 1\}$, the step set is $\{(1,0),(0,-1)\}$, and horizontal steps with distance $d$ from the $x$-axis are equipped with the weight $u X_d + w + v X_d^{-1}$. Suppose $(i,i)$ is the starting point and $(t,1)$ is last lattice point in this region, then 
		\begin{equation*}
		h_{t-i}(u X_1 + w + v X_1^{-1},\ldots,u X_i + w + v X_i^{-1})
		\end{equation*}
		is the generating function of such paths.
		\item In the region $\{(x,y) \mid y \le 1\}$, the step set is $\{(-1,-1),(-2,-2),(-2,-1)\}$. Steps of type $(-1,-1)$ are equipped with the weight $-w$, while steps of type $(-2,-2)$ are equipped with the weight $-u v$.  Suppose a path goes from 
		$(t,1)$ to $(2,-j+2)$, then the generating function of such paths is $c_{t,j}(u,v,w)$.
		\item Paths starting in $(i,i)$ are equipped with the additional weight $X_i^{n-1}$. 
	\end{itemize} 
	The paths are nonintersecting in the region $\{(x,y) \mid y \ge 1\}$. In the region $\{(x,y) \mid y \le 1\}$, there may be intersections, however, no two steps of different paths may have a common end point. This concludes the proof of Theorem~\ref{interpret2}. 
\end{proof}

\subsection*{Towards a signless version of Theorem~\ref{interpret2}?} 

It seems less clear how to show that the generating function of lattice paths from 
Theorem~\ref{interpret2} has no negative coefficients. It is tempting to proceed as in the case of the first interpretation since Lemma~\ref{signlessfirst} does have the following counterpart (Lemma~\ref{counterpart}). However, it 
is unclear what replaces the Gessel--Viennot involution from the previous section.

\begin{lemma} 
	\label{counterpart} 
	Let $i,j$ be positive integers, then 
	\begin{equation*}
	\sum_{t\in\mathbb{Z}} 
	c_{t,j}(u,v,w) 
	h_{t-i}(u X_1 + w + v X_1^{-1},\ldots,u X_i + w + v X_i^{-1})
	\end{equation*}
	is the generating function of lattice paths from $(i,i)$ to $(2j,1)$ using the step set $\{(1,0),\allowbreak (0,-1)\}$, with three types of $(1,0)$-steps, colored in red, blue and green, and with weights $u X_d$, $v X_d^{-1}$ and $w$, respectively, where $d$ is the distance from the $x$-axis and $c_{t,j}(u,v,w)$ is given as in \eqref{def_ctj}, and right of the line $y=x-j$ there is no green horizontal step and also no pair of consecutive 
	steps where the first is blue and the second is red.
\end{lemma} 

\begin{proof}
	Recall that the sum in the lemma is the generating function of lattice paths from $(i,i)$ to 
	$(2,-j+2)$ using the step set and weights as given in the lemma in the region 
	$\{(x,y) \mid y \ge 1\}$, while below $y=1$, the step set is $\{(-1,-1),(-2,-2),(-2,-1)\}$, where the steps of type $(-1,-1)$  and $(-2,-2)$ are equipped with the weight $-w$ and $-u v$, respectively. 
	
	We will construct a sign-reversing involution on such paths where one of the following is satisfied:
	\begin{itemize}
		\item The path contains a step of type $(-1,-1)$.
		\item The path contains a step of type $(-2,-2)$.
		\item Right of the line $y=x-j$, the path contains a green horizontal step.
		\item Right of the line $y=x-j$, the path contains a pair of consecutive horizontal steps where the first is blue and the second is red.
	\end{itemize} 
	A path that does not fall into that class has only steps of type $(-2,-1)$ below the line $y=1$, and, since its end point is $(2,-j+2)$, the last point on the line $y=1$ is $(2j,1)$. Consequently, we obtain a path as described in the lemma after deleting the (fixed) portion below the line $y=1$.  
	
	In order to construct the sign-reversing involution, we observe the following: Assuming that $(t,1)$ is the last point on the line $y=1$, the number of $(-2,-1)$-steps is $t-j-1$, while the total number of 
	steps from $(i,i)$ to $(t,1)$ is $t-1$. Let $r$ be the maximal number of consecutive $(-2,-1)$-steps right after 
	$(t,1)$. Now consider the $r$ steps before $(t,1)$ is reached. Since $r \le t-j-1 < t-1$, these steps exist.
	
	Among these, we consider occurrences of green horizontal steps and occurrences of pairs of consecutive horizontals steps  where the first is blue and the second is red, and, if there is such an occurrence, we consider the one that is closest to $(t,1)$.  Suppose it is a green horizontal step at position~$s$ before 
	$(t,1)$, then we delete this step and replace the $s$-th $(-2,-1)$-step after $(t,1)$ by a $(-1,-1)$-step. Clearly, this shifts the last point on the line $y=1$ from $(t,1)$ to $(t-1,1)$. If we have a pair of consecutive horizontal steps where the first is blue and the second is red, and they are in position $s,s+1$ before $(t,1)$, then we delete the blue and the red step and replace the two $(-2,-1)$-steps at positions $s$ and $s+1$ after $(t,1)$ by $(-2,-2)$. In this case, the last point on the line $y=1$ is shifted by $2$ to the left.
	
	If there is neither a green horizontal step nor a pair of consecutive horizontal steps where the first is blue and the second is red, then there has to be either a $(-1,-1)$-step or a $(-2,-2)$-step after the $r$-th 
	$(-2,-1)$-step. In the first case, we replace it by a $(-2,-1)$-step and insert a green horizontal step in 
	position $r+1$ before $(t,1)$. This shifts the last point on the line $y=1$ from $(t,1)$ to $(t+1,1)$. In the second case, we replace the $(-2,-2)$-step by two $(-2,-1)$-steps and insert a pair of consecutive horizontal steps where the first is blue and the second is red in positions $r+2$ and $r+1$ before $(t,1)$, which shifts the last point on the line $y=1$ from $(t,1)$ to $(t+2,1)$.
	
	This involution is clearly sign-reversing.\qedhere
	
\end{proof} 

\subsection{The third type of families of lattice paths} 
\label{third}

\begin{theorem} 
	\label{interpret3} 
	For $n \ge 1$, the generating function of arrowed monotone triangles with bottom row 
	$0,2,\ldots,2n-2$ is equal to the generating function of families of $n$ nonintersecting lattice paths with starting points in $A_i=\{(n-i+1,2n),(n+i-1,2n)\}$, $i=1,2,\ldots,n$, to $E_j=(j,-j+1)$, $j=1,2,\ldots,n$ with the following properties:
	\begin{itemize}
		\item  In the region $\{(x,y) \mid y \ge 1\}$, the step set is $\{(1,0),(0,-1)\}$. Horizontal steps at height $1,2,3,4,\ldots$ above the $x$-axis have weight 
		$u X_1, v X_1^{-1}, u X_2, v X_2^{-1},\ldots$, respectively.
		\item In the region $\{(x,y) \mid y \le 1\}$, the step set is $\{(-1,-1),(0,-1)\}$, where steps of type $(0,-1)$ are equipped with the weight $w$.
	\end{itemize} 
	Let $1 < i_1 < i_2 < \ldots < i_k \le n$ be precisely the indices for which we choose 
	$(n+i_k-1,2n) \in A_{i_k}$ as starting points (and not $(n-i_k+1,2n) \in A_{i_k}$). 
	The weight of a family is 
	\begin{equation*}
		\prod_{l=1}^k (-uv)^{i_l-1}  \prod_{i=1}^n X_i^{n-1}
	\end{equation*}
	multiplied by the product of the weights of all its steps where the 
	weight of a step is $1$ if it has not been specified. 
\end{theorem}

Note that $A_1=\{(n,2n)\}$ consists of a single point. Note as well that in this interpretation, the sign has been incorporated into the weight.
An example for the case $n=4$ is provided in Figure~\ref{fig:ExampleFirstOfThirdInterpretation}. For this interpretation, 
we omit to showcase the example for $n=2$ since 
it already involves $57$ configurations. 

\begin{figure}[htb]
	\centering
	\begin{tikzpicture}[scale=.45,baseline=(current bounding box.center)]
		\fill [light-gray] (0,-3.75) rectangle (7.75,1);
		
		\draw [help lines,step=1cm,dashed] (0,-3.75) grid (7.75,8.75);
		
		\draw[->,thick] (0,0)--(7.75,0) node[right]{$x$};
		\draw[->,thick] (0,-3.75)--(0,8.75) node[above]{$y$};
		
		\fill (1,8) circle (5pt);
		\fill (2,8) circle (5pt);
		\fill (3,8) circle (5pt);
		\fill (4,8) circle (5pt);
		\fill (5,8) circle (5pt);
		\fill (6,8) circle (5pt);
		\fill (7,8) circle (5pt);
		
		\fill (1,0) circle (5pt);
		\fill (2,-1) circle (5pt);
		\fill (3,-2) circle (5pt);
		\fill (4,-3) circle (5pt);
		
		\path[decoration=arrows, decorate] (1,8) --++ (0,-1) --++ (0,-1) --++ (1,0) --++ (0,-1) --++ (0,-1) --++ (0,-1) --++ (0,-1) --++ (0,-1) --++ (-1,-1);
		
		\path[decoration=arrows, decorate] (3,8) --++ (0,-1) --++ (0,-1) --++ (0,-1) --++ (0,-1) --++ (0,-1) --++ (1,0) --++ (0,-1) --++ (0,-1) --++ (-1,-1) --++ (-1,-1);
		
		\path[decoration=arrows, decorate] (4,8) --++ (0,-1) --++ (0,-1) --++ (0,-1) --++ (1,0) --++ (0,-1) --++ (0,-1) --++ (0,-1) --++ (0,-1) --++ (-1,-1) --++ (0,-1) --++ (-1,-1);
		
		\path[decoration=arrows, decorate] (6,8) --++ (0,-1) --++ (0,-1) --++ (0,-1) --++ (0,-1) --++ (0,-1) --++ (0,-1) --++ (0,-1) --++ (1,0) --++ (-1,-1) --++ (-1,-1) --++ (-1,-1) --++ (0,-1);

	\end{tikzpicture}
	
	\caption{Example of the lattice paths in Theorem~\ref{interpret3} for $n=4$. The weight of this family of nonintersecting lattice paths is $(-uv)^{3-1} w^2 \allowbreak (u X_1) (u X_2) (u X_{3}) (v X_{3}^{-1}) X_1^3 X_2^3 X_3^3 X_4^3$, which is equal to $u^5 v^3 w^2 X_1^4 X_2^4 X_3^3 X_4^3$.}
	\label{fig:ExampleFirstOfThirdInterpretation}
\end{figure}

\begin{proof}[Proof of Theorem~\ref{interpret3}]
	
	We interpret the Jacobi--Trudi-type expression~\eqref{jacobitrudi3} derived in Appendix~\ref{ap:computational_proof} in terms of the following families of $n$ nonintersecting lattice paths: We consider lattice paths from $A_i=\{(n-i+1,2n),(n+i-1,2n)\}$ to $E_j=(j,-j+1)$ for $1 \le i,j \le n$, see Figure~\ref{fig:ExampleFirstOfThirdInterpretation}. The step set as well as the edge weights depend on whether or not we are below the line $y=1$. 
	\begin{itemize} 
		\item Above and on the line $y=1$, the step set is $\{(1,0),(0,-1)\}$. Horizontal steps at height~$d$ above the $x$-axis have weight $u X_d$ if $d \le n$ and the weight $v X_{d-n}^{-1}$ if $d>n$. Assuming that $(k,1)$ is the last lattice point on the line $y=1$, the generating function of lattice paths from 
		$(n-i+1,2n)$ to $(k,1)$ is 
		\begin{equation*}
			h_{k+i-n-1}(u X_1,\ldots,u X_n,v X_1^{-1},\ldots,v X_n^{-1}),
		\end{equation*} 
		while the generating function of those lattice paths from $(n+i-1,2n)$ to $(k,1)$ is 
		\begin{equation*}
			h_{k-i-n+1}(u X_1,\ldots,u X_n,v X_1^{-1},\ldots,v X_n^{-1}).
		\end{equation*}
		Paths that start in $(n+i-1,2n)$ get an additional weight $(-uv)^{i-1}$, where the sign will be explained below.
		\item Below the line $y=1$, the step set is $\{(-1,-1),(0,-1)\}$, and since we want to reach $(j,-j+1)$, there are $k-j$ steps of type $(-1,-1)$ and $2j-k$ steps of type $(0,-1)$, which gives in total $\binom{j}{k-j}$ choices. The latter steps carry the weight $w$. 
		\item We have to multiply with an overall factor of $\prod_{i=1}^{n} X_i^{n-1}$.
	\end{itemize} 
	Note that the factor $\frac{1}{2}$ is taken into account by the fact that $A_1$ consists of a single point.
	
	The nonintersecting property does not force that $A_i$ is connected to $E_i$ by a path, so we still need to take the sign into account: Suppose we have the starting points $(n-i+1,2n)$ precisely for $n \ge i_p > i_{p-1} > \ldots > i_1=1$ and starting points $(n+i-1,2n)$ for $1 < j_1 < j_2 < \ldots < j_{n-p} \le n$. Then the only permutation for which the $n$-tuple of paths can be nonintersecting is
	\begin{equation*}
		(i_p,i_{p-1},\ldots,i_1,j_1,\ldots,j_{n-p})^{-1},
	\end{equation*}
	the sign of which is 
	$(-1)^{(i_1-1)+(i_2-1)+\ldots+(i_p-1)}$. Thus, taking the factor $(-1)^{\binom{n}{2}}$ into account, each path with $(n+i-1,2n)$ as starting point contributes $(-1)^{i-1}$ to the sign.
\end{proof}

\section{Acknowledgment} 

We thank Florian Schreier-Aigner for insightful discussions and the two anonymous referees for their helpful comments and suggestions.

\appendix

\section{\texorpdfstring{Alternative proof of Theorem~\ref{interpret4}}{Alternative proof of Theorem~2.2}}\label{ap:computational_proof}

The proof of Theorem~\ref{interpret4} in Section~\ref{sec:third} deduces the 
result from Theorem~\ref{interpret1}. We provide a direct proof in this appendix. To this end, we first derive a Jacobi--Trudi-type expression of \eqref{bialternant} which is different from the one in Section~\ref{sec:third}. We then interpret this expression as families of nonintersecting lattice paths.

\subsection{Another Jacobi--Trudi-type expression} 

\begin{lemma} 
	\label{jacobitrudi3lem}
	In the following identities, the argument of all complete homogeneous symmetric functions~$h$ is $(u X_1,\ldots,u X_n,v X_1^{-1},\ldots,v X_n^{-1})$:
	\begin{enumerate}
		\item\label{lem:jacobitrudi3lem1} For $n \ge 1$, the following identity holds: 
		\begin{multline*} 
				\det\limits_{1 \le i,j \le n} \left( \left(u^2 X_i^2 + u w X_i\right)^j + \left(v^2 X_i^{-2} + v w X_i^{-1}\right)^j \right)
				\det\limits_{1 \le i,j \le n} \left( \left(u^2 X_i^2 + u w X_i\right)^j - \left(v^2 X_i^{-2} + v w X_i^{-1}\right)^j \right)\\
			\times \prod_{1 \le i < j \le n} (X_j-X_i)^{-1} (u^{-1} v X_j^{-1}-u^{-1}v X_i^{-1})^{-1} 
				\prod_{i,j=1}^{n}  (u^{-1} v X_j^{-1}-X_i)^{-1} \\
			= \frac{(-1)^n u^{n^2+\binom{n}{2}} v^{-\binom{n}{2}}}{2} 
			\det_{1 \le i, j \le n } \left( \sum_{k=j}^{2j} \binom{j}{k-j} w^{2j-k}  \left( h_{k-i+1} - u^{-i+1+n} v^{-i+1+n} h_{k+i-2n-1} \right) \right) 
			\\
			\times \det_{1 \le i,j \le n} \left(  \sum_{k=j}^{2j} \binom{j}{k-j} w^{2j-k} 
			(h_{k+i-n-1} + u^{i-1} v^{i-1} h_{k-i-n+1} )  \right).
		\end{multline*} 
		\item\label{lem:jacobitrudi3lem2} For $n \ge 1$, the following identity holds:
		\begin{multline*}
			\det\limits_{1 \le i,j \le n} \left( \left(u^2 X_i^2 + u w X_i\right)^{j-1} + \left(v^2 X_i^{-2} + v w X_i^{-1}\right)^{j-1} \right) \\
			\times\det\limits_{1 \le i,j \le n} \left( \left(u^2 X_i^2 + u w X_i\right)^j - \left(v^2 X_i^{-2} + v w X_i^{-1}\right)^j \right) \\
			\times\prod_{1 \le i < j \le n} (X_j-X_i)^{-1} (u^{-1} v X_j^{-1}-u^{-1}v X_i^{-1})^{-1} 
			\prod_{i,j=1}^{n}  (u^{-1} v X_j^{-1}-X_i)^{-1}\\
			= (-1)^n u^{n^2+\binom{n}{2}} v^{-\binom{n}{2}} 
			\det_{1 \le i, j \le n-1 } \left( \sum_{k=j}^{2j} \binom{j}{k-j} w^{2j-k}  \left( h_{k-i} - u^{-i+n} v^{-i+n} 
			h_{k+i-2n} \right) \right) 
			\\
			\times 
			\det_{1 \le i,j \le n} \left(  \sum_{k=j}^{2j} \binom{j}{k-j} w^{2j-k} 
			(h_{k+i-n-1}+ u^{i-1} v^{i-1}  h_{k-i-n+1} \right).
		\end{multline*}
	\end{enumerate} 
\end{lemma}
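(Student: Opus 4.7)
The approach is to apply the bialternant-to-Jacobi--Trudi conversion of Lemma~\ref{lem:det} separately to each of the two determinants $\det(Y_i^j\pm Z_i^j)$ on the left-hand side of part~(1), and to identify the remaining Vandermonde factors with the $2n$-variable complete symmetric functions $h_l(U)$ appearing on the right, where $U=(uX_1,\dots,uX_n,vX_1^{-1},\dots,vX_n^{-1})$. First I would observe that the denominator on the left equals the Vandermonde product $\prod_{1\le i<j\le 2n}(R_j-R_i)$ in the $2n$ variables $R_i=X_i$ for $i\le n$ and $R_{i+n}=u^{-1}vX_i^{-1}$ for $1\le i\le n$. Since each entry $Y_i^j\pm Z_i^j$ is a Laurent polynomial in $X_i$ alone, Lemma~\ref{lem:det} applies with $Y_i=X_i$; expanding $Y_i^j=u^jX_i^j(uX_i+w)^j$ and $Z_i^j=v^jX_i^{-j}(vX_i^{-1}+w)^j$ by the binomial theorem and extracting coefficients of $X_i^k$ yields
\begin{multline*}
\frac{\det(Y_i^j\pm Z_i^j)_{i,j=1}^n}{\prod_{1\le i<j\le n}(X_j-X_i)}\\
=\det_{1\le i,j\le n}\!\left(\sum_{k=j}^{2j}\binom{j}{k-j}w^{2j-k}\bigl(u^k h_{k-i+1}(X_1,\dots,X_i)\pm v^k h_{-k-i+1}(X_1,\dots,X_i)\bigr)\right).
\end{multline*}

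The second, and more delicate, step is to incorporate the remaining Vandermonde factors $\prod_{1\le i<j\le n}(u^{-1}vX_j^{-1}-u^{-1}vX_i^{-1})\cdot\prod_{i,j=1}^n(u^{-1}vX_j^{-1}-X_i)$ and re-express the entries in terms of $h_l(U)$. The key ingredients are: the homogeneity identity $u^k h_{k-i+1}(X_1,\dots,X_i)=u^{i-1}h_{k-i+1}(uX_1,\dots,uX_i)$; the reflection identity~\eqref{negative} combined with homogeneity, which gives $v^k h_{-k-i+1}(X_1,\dots,X_i)=(-1)^{i+1}v^{1-i}(vX_1^{-1}\cdots vX_i^{-1})\,h_{k-1}(vX_1^{-1},\dots,vX_i^{-1})$; the restriction identity $h_l(Y_1,\dots,Y_i)=\sum_{m\ge 0}(-1)^m e_m(Y_{i+1},\dots,Y_n)\,h_{l-m}(Y_1,\dots,Y_n)$ for extending $h$-functions from $i$ to $n$ variables; and the Cauchy-type identity $h_l(U)=\sum_a h_a(uX_1,\dots,uX_n)\,h_{l-a}(vX_1^{-1},\dots,vX_n^{-1})$ arising from the multiplicative generating function for $h_l(U)$. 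Applying these row-by-row, together with column operations that absorb the mixed Vandermonde factors, one transforms the two Jacobi--Trudi-type determinants in $X_i$-variables into the stated form: the combination $h_{k-i+1}(U)-u^{-i+1+n}v^{-i+1+n}h_{k+i-2n-1}(U)$ arises from the $\det(Y^j+Z^j)$ side, while $h_{k+i-n-1}(U)+u^{i-1}v^{i-1}h_{k-i-n+1}(U)$ arises from the $\det(Y^j-Z^j)$ side. The monomial prefactor $(-1)^nu^{n^2+\binom n2}v^{-\binom n2}/2$ accumulates from the homogeneity rescalings (contributing $u^{i-1}$ per row, summing to $u^{\binom n2}$) and the signs and powers of $u,v$ introduced by~\eqref{negative} and the Cauchy recombination; the factor $1/2$ reflects the fact that the first row ($i=1$) of the ``$+$''-type determinant is already doubled, since $h_{k+1-n-1}(U)+u^0v^0\,h_{k-1-n+1}(U)=2h_{k-n}(U)$.

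Part~(2) then follows from part~(1) as follows: expanding $\det(Y_i^{j-1}+Z_i^{j-1})_{1\le i,j\le n}$ along its constant first column $(2,\dots,2)^T$ produces an explicit factor of $2$ (cancelling the $1/2$ of part~(1)) and leaves an $(n-1)\times(n-1)$ minor whose Jacobi--Trudi entries, after reindexing $i\mapsto i+1$, match precisely the displayed right-hand side of part~(2). The main obstacle is the precise bookkeeping of monomial and sign contributions and the alignment of $h$-indices during the conversion: each application of the homogeneity relation, of the reflection identity~\eqref{negative}, and of the restriction and Cauchy identities introduces distinct factors and index shifts that must combine exactly to yield the stated exponents $u^{-i+1+n}v^{-i+1+n}$ and $u^{i-1}v^{i-1}$ together with the precise index combinations $k-i+1$, $k+i-2n-1$, $k+i-n-1$, and $k-i-n+1$; verifying this combinatorial accounting is the technically demanding part of the proof.
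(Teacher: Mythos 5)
There is a genuine gap at the heart of your plan. You correctly note that the left-hand denominator is the full Vandermonde $\prod_{1\le i<j\le 2n}(R_j-R_i)$ in the $2n$ variables $R_i=X_i$, $R_{n+i}=u^{-1}vX_i^{-1}$, but you then propose to apply Lemma~\ref{lem:det} \emph{separately} to each $n\times n$ determinant over the $n$-variable Vandermonde $\prod_{1\le i<j\le n}(X_j-X_i)$. That cannot work as described: the denominator contains only one copy of $\prod_{i<j}(X_j-X_i)$, and, more seriously, the $n^2$ mixed factors $\prod_{i,j=1}^{n}(u^{-1}vX_j^{-1}-X_i)$ do not divide either individual determinant — only their \emph{product} is divisible by them. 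The mechanism that makes this divisibility visible, and which your proposal is missing, is the block identity $\det(A+B)\det(A-B)=\det\left(\begin{smallmatrix}A&B\\ B&A\end{smallmatrix}\right)$: after substituting $X_{n+i}=u^{-1}vX_i^{-1}$ this turns the product of the two $n\times n$ determinants into a \emph{single} $2n\times 2n$ bialternant whose rows each depend on one variable, so that Lemma~\ref{lem:det} applies once, with the full $2n$-variable Vandermonde, and the two stated $n\times n$ determinants only reappear at the end after row and column operations block-triangularize the resulting $2n\times 2n$ Jacobi--Trudi matrix. Your list of identities (homogeneity, the reflection identity~\eqref{negative}, restriction, Cauchy) together with unspecified ``column operations that absorb the mixed Vandermonde factors'' does not supply a substitute for this step; those operations act within a fixed determinant and cannot effect division of a product of two determinants by a coupling factor such as $\prod_{i,j}(u^{-1}vX_j^{-1}-X_i)$.

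Your derivation of part~(2) from part~(1) is also not valid. Expanding $\det\bigl(Y_i^{j-1}+Z_i^{j-1}\bigr)$ along its constant first column does not ``leave an $(n-1)\times(n-1)$ minor'': it produces an alternating sum of $n$ distinct minors, each in a different subset of the variables, and recombining such a sum is a substantial argument in its own right (this is essentially what Lemma~\ref{prefactor} and the $3^{n-1}$-fold splitting accomplish later, in the proof of Corollary~\ref{jacobitrudi3cor}~\eqref{cor:jacobitrudi3cor3}). In the actual proof, part~(2) is established independently by a variant block identity, $\det(c\,|\,A+B)\det(A-B\,|\,e-d)=\det\left(\begin{smallmatrix}c&A&B&d\\ c&B&A&e\end{smallmatrix}\right)$, again reducing to a single $2n\times 2n$ determinant. (Your explanation of the factor $1/2$ in part~(1) — the doubling of the $i=1$ row of the second determinant — is the one point of this bookkeeping that is essentially right.)
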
 

Note that we set the evaluation of the ``empty'' determinant to equal $1$ in the case $n=1$.

\begin{proof}
	{\it Re \eqref{lem:jacobitrudi3lem1}.}
	We consider the product 
	\begin{equation*}
	\det_{1 \le i,j \le n} \left( \left(u^2 X_i^2 + u w X_i\right)^j - \left(v^2 X_i^{-2} + v w X_i^{-1}\right)^j \right)
	\det_{1 \le i,j \le n} \left( \left(u^2 X_i^2 + u w X_i\right)^j + \left(v^2 X_i^{-2} + v w X_i^{-1}\right)^j \right).
	\end{equation*}
	For square matrices $A,B$ of the same size, we have 
	\begin{equation*}
	\renewcommand*{\arraystretch}{1.2}
	\det(A-B) \det(A+B) = 
	\det \left( \begin{array}{@{}c|c@{}} A-B & B \\ \hline  0 & A+B \end{array} \right) =
	\det \left( \begin{array}{@{}c|c@{}} A-B & B \\ \hline  B-A & A \end{array} \right) =
	\det \left( \begin{array}{@{}c|c@{}} A & B \\ \hline  B & A \end{array} \right),
	\end{equation*}
	and this implies that our product is equal to 
	\begin{equation*}
	\det \left( \begin{array}{@{}c@{}|c@{}} 
		\left( \left(u^2 X_i^2 + u w X_i\right)^j \right)_{1 \le i, j \le n} & \left( \left(v^2 X_i^{-2} + v w X_i^{-1}\right)^j \right)_{1 \le i,j \le n} 
		\xmathstrut{0.4} \\ \hline  
		\left( \left(v^2 X_i^{-2} + v w X_i^{-1}\right)^j \right)_{1 \le i,j \le n} & \left( \left(u^2 X_i^2 + u w X_i\right)^j \right)_{1 \le i,j \le n} \xmathstrut{0.4}
	\end{array} \right). 
	\end{equation*}
	Setting $X_{n+i} = u^{-1} v X_i^{-1}$, we can also write this as
	\begin{equation*}
	\det \left( \begin{array}{@{}c@{}|c@{}} 
		\left( \left(u^2 X_i^2 + u w X_i\right)^j \right)_{\substack{1 \le i \le 2n \\ 1 \le j \le n}} & \left( \left(v^2 X_i^{-2} + v w X_i^{-1}\right)^j \right)_{\substack{1 \le i \le 2n \\ 1 \le j \le n}} \xmathstrut{0.4}
	\end{array} \right). 
	\end{equation*}
	We apply Lemma~\ref{lem:det} with $Y_i=X_i$ to
	\begin{equation} 
		\label{applyto} 
		\frac{\det \left( \begin{array}{@{}c@{}|c@{}} 
				\left( \left(u^2 X_i^2 + u w X_i\right)^j \right)_{\substack{1 \le i \le 2n \\ 1 \le j \le n}} & \left( \left(v^2 X_i^{-2} + v w X_i^{-1}\right)^j \right)_{\substack{1 \le i \le 2n \\ 1 \le j \le n}} \xmathstrut{0.4}
			\end{array} \right)}{\prod_{1 \le i < j \le 2n} (X_j-X_i)}
	\end{equation} 
	and obtain 
	\begin{equation*}  
		\det \left( \begin{array}{@{}c@{}|c@{}} 
			\left( \sum\limits_{k \in \mathbb{Z}} \binom{j}{k-j} u^{k} w^{2j-k}  h_{k-i+1}(X_1,\ldots,X_i) \right)_{\substack{1 \le i \le 2n \\ 1 \le j \le n}} & 
			\left(  \sum\limits_{k \in \mathbb{Z}} \binom{j}{-k-j} v^{-k} w^{2j+k} h_{k-i+1}(X_1,\ldots,X_i) \right)_{\substack{1 \le i \le 2n \\ 1 \le j \le n}} \xmathstrut{0.8}
		\end{array} \right).
	\end{equation*} 
	We multiply from the left with the following matrix 
	\begin{equation*}
	(h_{j-i}(X_j,X_{j+1},\ldots,X_{2n}))_{1 \le i,j \le 2n}
	\end{equation*}
	with determinant $1$. For this purpose, note that  
	\begin{equation*}
	\sum_{l=1}^{2n} h_{l-i}(X_l,X_{l+1},\ldots,X_{2n}) h_{k-l+1}(X_1,\ldots,X_l)=h_{k-i+1}(X_1,\ldots,X_{2n}), 
	\end{equation*}
	and, therefore, the multiplication results in 
	\begin{equation*} 
			\det \left( \begin{array}{@{}c@{}|c@{}} 
				\left( \sum\limits_{k \in \mathbb{Z}} \binom{j}{k-j} u^{k} w^{2j-k}  h_{k-i+1}  \right)_{\substack{1 \le i \le 2n \\ 1 \le j \le n}} & 
				\left(  \sum\limits_{k \in \mathbb{Z}} \binom{j}{k-j} v^{k} w^{2j-k} h_{-k-i+1} \right)_{\substack{1 \le i \le 2n \\ 1 \le j \le n}} \xmathstrut{0.8}
			\end{array} \right),
	\end{equation*}
	where we omitted the arguments~$(X_1,\ldots,X_{2n})$ of the complete homogeneous symmetric functions~$h$.
		
	Using \eqref{negative}  and specializing $X_{n+i} = u^{-1} v X_i^{-1}$ for $i=1,2,\ldots,n$ gives
		\begin{equation*}
			(-1)^n
			\det \left( 
			\begin{array}{@{}c@{}} 
				\left( \sum\limits_{k=j}^{2j} \binom{j}{k-j} u^{i-1}  w^{2j-k}   
				h_{k-i+1}(u X_1,\ldots,u X_{n},v X_1^{-1},\ldots,v X_n^{-1})  \right)_{\substack{1 \le i \le 2n \\ 1 \le j \le n}}  \xmathstrut{1.2} \\\hline
				\left(  \sum\limits_{k=j}^{2j} \binom{j}{k-j} u^n v^{-i+1+n} w^{2j-k} h_{k+i-2n-1}(v X_1^{-1},\ldots,v X_n^{-1},u X_1\ldots,u X_{n}) 
				\right)_{\substack{1 \le i \le 2n \\ 1 \le j \le n}} \xmathstrut{1.2}
			\end{array} 
			\right)^T.
		\end{equation*} 
		Omitting again the arguments of the complete homogeneous symmetric functions~$h$ and some manipulations give 
		\begin{equation*} 
			(-1)^n
			\det \left( 
			\begin{array}{@{}c@{}|c@{}} 
				\left( \sum\limits_{k=j}^{2j} \binom{j}{k-j} u^{i-1+n}  w^{2j-k}   
				h_{k-i+1}  \right)_{\substack{1 \le i \le 2n \\ 1 \le j \le n}}  &
				\left(  \sum\limits_{k=j}^{2j} \binom{j}{k-j}  v^{-i+1+n} w^{2j-k} h_{k+i-2n-1}
				\right)_{\substack{1 \le i \le 2n \\ 1 \le j \le n}} \xmathstrut{1.2}
			\end{array} 
			\right).
		\end{equation*} 
		For $j=1,2,\ldots,n$, we subtract the $(n+j)$-th column multiplied by $u^{2n}$ from the $j$-th column.
		Then, for $i=n+2,n+3,\ldots,2n$, multiply the $(2n+2-i)$-th row with $u^{i-n-1} v^{-i+n+1}$ and add it to the $i$-th row. 
		After this, the entries in the first $n$ columns that are in row $n+1$ or below vanish. 
		This implies that the determinant is the product of the determinants of the upper left block and the lower right block, that is,
		\begin{multline*} 
			(-1)^n 
			\det_{1 \le i, j \le n } \left( \sum_{k=j}^{2j} \binom{j}{k-j} w^{2j-k}  \left(   u^{i-1+n} h_{k-i+1} - u^{2n} v^{-i+1+n} h_{k+i-2n-1} \right) \right) 
			\\
			\times 
			\det_{1 \le i,j \le n} \left(  \sum_{k=j}^{2j} \binom{j}{k-j} w^{2j-k} 
			(v^{-i+1}  h_{k+i-n-1} + [i \not=1] u^{i-1} h_{k-i-n+1} )  \right),
		\end{multline*} 
		where we make use of the \emph{Iverson bracket}: For a logical statement $S$, we set $\left[S\right]=1$ if $S$ is true and $\left[S\right]=0$ otherwise.
		
		The assertion then follows by taking the numerator of \eqref{applyto} as well as the specialization and some further manipulations into account.

		{\it Re \eqref{lem:jacobitrudi3lem2}.} We consider now the following product:
		\begin{multline*}
			\det_{1 \le i,j \le n} \left( 
			\left(u^2 X_i^2 + u w X_i\right)^{j-1} + \left(v^2 X_i^{-2} + v w X_i^{-1}\right)^{j-1}
			\right)
			\\
			\times
			\det_{1 \le i,j \le n} \left(  
			\left(u^2 X_i^2 + u w X_i\right)^j - \left(v^2 X_i^{-2} + v w X_i^{-1}\right)^j
			\right).
		\end{multline*}
		Let $A,B$ be $n \times (n-1)$ matrices and $c,d,e$ be column-vectors of length $n$, then we will use
		\begin{multline*}
			\renewcommand*{\arraystretch}{1.2}
			\det\left(\begin{array}{@{}c|c@{}}
				c & A+B
			\end{array}\right) \det\left(\begin{array}{@{}c|c@{}}
				A-B & e-d
			\end{array}\right) = 
			\det \left( \begin{array}{@{}c|c|c|c@{}} c & A+B  & B &d   \\ \hline  0 & 0  & A-B & e-d    \end{array} \right)  \\ \renewcommand*{\arraystretch}{1.2}
			= \det \left( \begin{array}{@{}c|c|c|c@{}} c & A+B  & B &d   \\ \hline  c & A+B  & A & e    \end{array} \right) =
			\det \left( \begin{array}{@{}c|c|c|c@{}} c & A & B &d   \\ \hline  c & B  & A & e    \end{array} \right),
		\end{multline*}
		and this implies that our product is equal to 
		\begin{equation*}
		2 \cdot \det \left( \begin{array}{@{}c@{}|c@{}} 
			\left( \left(u^2 X_i^2 + u w X_i\right)^{j-1} \right)_{1 \le i,j \le n} &  
			\left( \left(v^2 X_i^{-2} + v w X_i^{-1}\right)^j \right)_{1 \le i,j \le n} \xmathstrut{0.4} 
			\\ \hline  
			\left( \left(v^2 X_i^{-2} + v w X_i^{-1}\right)^{j-1} \right)_{1 \le i,j \le n} & 
			\left( \left(u^2 X_i^2 + u w X_i\right)^j \right)_{1 \le i,j \le n} \xmathstrut{0.4}
		\end{array} \right),  
		\end{equation*}
		where the $2$ compensates for $c=2$ since the column is now $1$. 
		Setting $X_{n+i} = u^{-1} v X_i^{-1}$, we can also write this is as
		\begin{equation*}
		2 \cdot \det \left( \begin{array}{@{}c@{}|c@{}} 
			\left( \left(u^2 X_i^2 + u w X_i\right)^{j-1} \right)_{\substack{1 \le i \le 2n \\ 1 \le j \le n}} &  
			\left( \left(v^2 X_i^{-2} + v w X_i^{-1}\right)^j \right)_{\substack{1 \le i \le 2n \\ 1 \le j \le n}}
			\xmathstrut{0.4} \end{array}  
		\right). 
		\end{equation*}
		We apply Lemma~\ref{lem:det} to
		\begin{equation} 
			\label{applyto2} 
			\frac{\det \left( \begin{array}{@{}c@{}|c@{}} 
					\left( \left(u^2 X_i^2 + u w X_i\right)^{j-1} \right)_{\substack{1 \le i \le 2n \\ 1 \le j \le n}} &  
					\left( \left(v^2 X_i^{-2} + v w X_i^{-1}\right)^j \right)_{\substack{1 \le i \le 2n \\ 1 \le j \le n}}
					\xmathstrut{0.4} \end{array}  
				\right)}{\prod_{1 \le i < j \le 2n} (X_j-X_i)}.
		\end{equation} 
		In a similar manner as above, after specializing again and omitting the arguments of the $h$'s, we obtain
		\begin{equation*} 
			(-1)^n
			\det \left( 
			\begin{array}{@{}c@{}|c@{}} 
				\left( \sum\limits_{k=j-1}^{2j-2} \binom{j-1}{k-j+1} u^{i-1+n}  w^{2j-2-k}   
				h_{k-i+1}  \right)_{\substack{1 \le i \le 2n \\ 1 \le j \le n}}  &
				\left(  \sum\limits_{k=j}^{2j} \binom{j}{k-j}  v^{-i+1+n} w^{2j-k} h_{k+i-2n-1}
				\right)_{\substack{1 \le i \le 2n \\ 1 \le j \le n}} \xmathstrut{1.2}
			\end{array} 
			\right),
		\end{equation*}
		which is equal to
		\begin{multline*} 
			(-1)^n
			\det \left( 
			\begin{array}{@{}c|c@{}|c@{}} 
				u^n \delta_{i,1} &
				\left( \sum\limits_{k=j}^{2j} \binom{j}{k-j} u^{i-1+n}  w^{2j-k}   
				h_{k-i+1}  \right)_{\substack{1 \le i \le 2n \\ 1 \le j \le n-1}}  &
				\left(  \sum\limits_{k=j}^{2j} \binom{j}{k-j}  v^{-i+1+n} w^{2j-k} h_{k+i-2n-1}
				\right)_{\substack{1 \le i \le 2n \\ 1 \le j \le n}} \xmathstrut{1.2}
			\end{array} 
			\right) \\
			= (-u)^n
			\det \left( 
			\begin{array}{@{}c@{}|c@{}} 
				\left( \sum\limits_{k=j}^{2j} \binom{j}{k-j} u^{i+n}  w^{2j-k}   
				h_{k-i}  \right)_{\substack{1 \le i \le 2n-1 \\ 1 \le j \le n-1}}  &
				\left(  \sum\limits_{k=j}^{2j} \binom{j}{k-j}  v^{-i+n} w^{2j-k} h_{k+i-2n}
				\right)_{\substack{1 \le i \le 2n-1 \\ 1 \le j \le n}} \xmathstrut{1.2}
			\end{array}
			\right),
		\end{multline*}
		where $\delta_{i,j} \coloneq \left[i=j\right]$ denotes the classical \emph{Kronecker delta}.  
		For $j=1,2,\ldots,n-1$, we subtract the $(n-1+j)$-th column multiplied by $u^{2n}$ from the $j$-th column.
		Then, for $i=n+1,n+2,\ldots,2n-1$, multiply the $(2n-i)$-th row with $u^{i-n} v^{-i+n}$ and add it to the $i$-th row. 
		After this, the entries in the first $n-1$ columns that are in row $n$ or below vanish. 
		This implies that the determinant is the product of the determinants of the upper left block and the lower right block, that is,
		\begin{multline*} 
			(-u)^n 
			\det_{1 \le i, j \le n-1 } \left( \sum_{k=j}^{2j} \binom{j}{k-j} w^{2j-k}  \left( u^{i+n}  h_{k-i} - u^{2n} v^{-i+n} 
			h_{k+i-2n} \right) \right) 
			\\
			\times 
			\det_{1 \le i,j \le n} \left(  \sum_{k=j}^{2j} \binom{j}{k-j} w^{2j-k} 
			(v^{-i+1} h_{k+i-n-1}+ u^{i-1}  h_{k-i-n+1} \right). 
		\end{multline*} 
		Taking the numerator of \eqref{applyto2} as well as the specialization into account and some further manipulations, the assertion follows.
	\end{proof}
	
	The following lemma is of use in the next corollary.	
	\begin{lemma} 
		\label{prefactor} 
		Let $n \ge 1$ and $0 \le m \le n-1$. Then we have 
		\begin{equation*}
		\sum_{l=1}^n  \frac{(-u X_l - v X_l^{-1})^{m}}{\prod_{j \not= l} (X_j-X_l)(u-v X_l^{-1} X_j^{-1})} =  
		\begin{cases}1, & m=n-1,\\0, & m<n-1. \end{cases}
		\end{equation*} 
	\end{lemma}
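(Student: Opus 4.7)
\medskip

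The plan is to observe that the apparent complication of the denominator collapses after a single algebraic identity, which reduces the statement to a classical Lagrange interpolation fact.

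First I would check the key factorization
\begin{equation*}
(X_j - X_l)\left(u - v X_l^{-1} X_j^{-1}\right) = u X_j - u X_l - v X_l^{-1} + v X_j^{-1} = Y_j - Y_l,
\end{equation*}
where I set $Y_k := u X_k + v X_k^{-1}$. Thus the denominator in the sum is exactly $\prod_{j\ne l}(Y_j - Y_l)$, while the numerator is $(-Y_l)^m$. Pulling out signs, the identity to prove becomes
\begin{equation*}
(-1)^{m-n+1} \sum_{l=1}^n \frac{Y_l^m}{\prod_{j\ne l}(Y_l - Y_j)} =
\begin{cases} 1, & m = n-1,\\ 0, & m < n-1. \end{cases}
\end{equation*}

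The remaining sum is the standard Lagrange-type expression: for any polynomial $f(Y)$ of degree at most $n-1$,
\begin{equation*}
\sum_{l=1}^n \frac{f(Y_l)}{\prod_{j\ne l}(Y_l - Y_j)} = [Y^{n-1}]\, f(Y),
\end{equation*}
because the left-hand side is a symmetric rational function in the $Y_l$ that is a polynomial (the poles cancel), of total degree $m - (n-1) \le 0$, and comparison of leading terms in one variable pins down the value. Applying this to $f(Y) = Y^m$ with $0 \le m \le n-1$ gives $1$ precisely when $m = n-1$ and $0$ otherwise. Since $(-1)^{m-n+1}$ then equals $(-1)^0 = 1$ in the surviving case, the formula drops out.

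The only step that requires a moment's care is the factorization $Y_j - Y_l = (X_j - X_l)(u - v X_l^{-1} X_j^{-1})$; once that is in hand, the rest is a one-line invocation of the Lagrange identity, and no further manipulation is needed. I do not anticipate any genuine obstacle.
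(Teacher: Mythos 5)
Your proposal is correct and follows essentially the same route as the paper: the key step in both is the factorization $(X_j-X_l)(u-vX_l^{-1}X_j^{-1})=Y_j-Y_l$ with $Y_k=uX_k+vX_k^{-1}$, which reduces the claim to the classical evaluation of $\sum_l Y_l^m/\prod_{j\ne l}(Y_l-Y_j)$. The only (immaterial) difference is that you finish by quoting the Lagrange interpolation identity, whereas the paper clears denominators and evaluates the resulting Vandermonde-type determinant.
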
 
	
	\begin{proof} As $(X_j-X_l)(u-v X_l^{-1} X_j^{-1}) = (u X_j + v X_j^{-1}) - (u X_l + v X_l^{-1})$, it suffices to show 
		\begin{equation*}
		\sum_{l=1}^n  \frac{(-Y_l)^{m}}{\prod_{j \not= l} (Y_j-Y_l)} =  
		\begin{cases}1, & m=n-1,\\0, & m<n-1. \end{cases}
		\end{equation*}
		Multiplying with $\prod_{1 \le i < j \le n} (Y_j-Y_i)$, this is equivalent to
		\begin{equation*}
		\sum_{l=1}^n  (-1)^{l+1} (-Y_l)^{m} \prod_{\substack{1 \le i < j \le n \\ i,j \not=l}} (Y_j-Y_i)  =  
		\begin{cases} \prod_{1 \le i < j \le n} (Y_j-Y_i),& m=n-1,\\0, & m<n-1. \end{cases}
		\end{equation*}
		Now, by the Vandermonde determinant evaluation, the left-hand side of this equation is 
		\begin{equation*}
		\det_{1 \le i,j \le n} \left( \begin{cases} Y_i^{j-1}, & j<n, \\  (-1)^{n-1} (-Y_i)^m, & j=n, \end{cases} \right)
		\end{equation*}
		and the assertion follows.
	\end{proof}

	\begin{cor} 
		\label{jacobitrudi3cor}
		In the following identities, the argument of all complete homogeneous symmetric functions~$h$ is $(u X_1,\ldots,u X_n,v X_1^{-1},\ldots,v X_n^{-1}).$
		\begin{enumerate}
			\item\label{cor:jacobitrudi3cor1} For $n \ge 1$, the following identity holds: 
			\begin{multline*}
				\frac{
					\det_{1 \le i,j \le n} \left( \left(u^2 X_i^2 + u w X_i\right)^j - \left(v^2 X_i^{-2} + v w X_i^{-1}\right)^j \right)}
				{\prod_{1 \le i < j \le n} (X_j-X_i)(v X_i^{-1} X_j^{-1}-u) \prod_{i=1}^{n} (u X_i -v X_i^{-1})} \\
				=
				\frac{1}{2} \det_{1 \le i,j \le n} \left(  \sum_{k=j}^{2j} \binom{j}{k-j} w^{2j-k} 
				(h_{k+i-n-1} + u^{i-1} v^{i-1} h_{k-i-n+1} )  \right). 
			\end{multline*} 
			\item\label{cor:jacobitrudi3cor2}For $n \ge 1$, the following identity holds:
			\begin{multline*}
				\frac{
					\det_{1 \le i,j \le n} \left( \left(u^2 X_i^2 + u w X_i\right)^j + \left(v^2 X_i^{-2} + v w X_i^{-1}\right)^j \right)}
				{\prod_{1 \le i < j \le n} (X_j-X_i)(u- v X_i^{-1} X_j^{-1})} \\ 
				=
				\det_{1 \le i, j \le n } \left( \sum_{k=j}^{2j} \binom{j}{k-j} w^{2j-k}  \left(  h_{k-i+1} - u^{-i+1+n} v^{-i+1+n} h_{k+i-2n-1} \right) \right).
			\end{multline*} 
			\item\label{cor:jacobitrudi3cor3} For $n \ge 1$, the following identity holds:
			\begin{multline*}
				\frac{1}{2}  \frac{\det_{1 \le i,j \le n} \left( \left(u^2 X_i^2 + u w X_i\right)^{j-1} + \left(v^2 X_i^{-2} + v w X_i^{-1}\right)^{j-1} \right)}
				{\prod_{1 \le i < j \le n} (X_j-X_i)(u - v X_i^{-1} X_j^{-1})} \\
				=
				\det_{1 \le i, j \le n-1} \left( \sum_{k=j}^{2j} \binom{j}{k-j} w^{2j-k}  \left( h_{k-i} - u^{-i+n} v^{-i+n} h_{k+i-2n} \right) \right).
			\end{multline*} 
		\end{enumerate} 
	\end{cor}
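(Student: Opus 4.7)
The plan is to prove the three identities by adapting the strategy from the proof of Lemma~\ref{jacobitrudi3lem}, applied to each $n \times n$ determinant individually rather than to the products appearing in that lemma. First I expand
$$
A_i^j \pm B_i^j = \sum_{k=j}^{2j}\binom{j}{k-j} w^{2j-k}\bigl(u^k X_i^k \pm v^k X_i^{-k}\bigr),
$$
and observe that under the doubling substitution $X_{n+i} = u^{-1} v X_i^{-1}$ used in the proof of Lemma~\ref{jacobitrudi3lem}, the quantity $v^k X_i^{-k}$ equals $u^k X_{n+i}^k$. Each entry of the matrix thus becomes a linear combination of pure powers $u^k X_\ell^k$ with $\ell \in \{i, n+i\}$, combined symmetrically in the pair (for the ``$+$'' case) or antisymmetrically (for the ``$-$'' case).

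For part (2), the key step is to mirror the proof of Lemma~\ref{jacobitrudi3lem}(1) applied to $\det(A_i^j + B_i^j)$ alone. Express this as a combination of minors of the $n \times 2n$ matrix $(u^k X_i^k)_{1 \le i \le n,\, 1 \le k \le 2n}$ combined with the binomial coefficient matrix, then apply Lemma~\ref{lem:det} with $Y_i = X_i$ to introduce the auxiliary $2n$-variable Vandermonde $\prod_{1 \le i < j \le 2n}(X_j - X_i)$. Left-multiply by the unimodular matrix $(h_{j-i}(X_j, \ldots, X_{2n}))$ to extend the resulting $h$-polynomials from $i$ arguments to all $2n$, specialize $X_{n+l} = u^{-1} v X_l^{-1}$, and use the homogeneity relation
$$
h_m(X_1, \ldots, u^{-1} v X_n^{-1}) = u^{-m}\, h_m(u X_1, \ldots, v X_n^{-1})
$$
to rewrite everything in terms of $h$-polynomials in $(u X_1, \ldots, v X_n^{-1})$. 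Column and row operations analogous to those in the proof of Lemma~\ref{jacobitrudi3lem}(1) block-triangularize the resulting $2n \times 2n$ matrix, and the upper-left $n \times n$ block is precisely the matrix whose determinant is the right-hand side of (2).

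Part (1) is obtained by the same method applied to $\det(A_i^j - B_i^j)$, the antisymmetric case producing a common row factor $(X_i - X_{n+i}) = (uX_i - vX_i^{-1})/u$ that accounts for the $\prod_i (u X_i - v X_i^{-1})$ in the denominator of (1), with the factor $\tfrac12$ on the right-hand side emerging from the antisymmetry. Part (3) uses that the $j = 1$ column of $(A_i^{j-1} + B_i^{j-1})$ is the constant column $2$, producing the left-hand $\tfrac12$ and reducing the determinant to size $(n-1) \times (n-1)$. Alternatively and more expediently, once part (2) is established by the direct argument, parts (1) and (3) follow from Lemma~\ref{jacobitrudi3lem}(1) and Lemma~\ref{jacobitrudi3lem}(2) respectively by division, using the denominator identity
$$
D_1 D_2 = (-1)^n u^{n^2+\binom{n}{2}} v^{-\binom{n}{2}} D_L
$$
(verified factor by factor after writing out each product explicitly, using $(uX_i - vX_i^{-1}) = X_i(u - vX_i^{-2})$), which exactly matches the prefactor in Lemma~\ref{jacobitrudi3lem}(1); here $D_1, D_2, D_L$ denote the denominators appearing on the left-hand sides of parts (1), (2) and of Lemma~\ref{jacobitrudi3lem}(1), respectively.

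The main obstacle is the direct proof of part (2): verifying that the block-triangularization really isolates the desired $\det_A$ without contamination from other contributions in the Cauchy--Binet-style expansion over $n$-subsets of $[2n]$. The $n = 1$ case provides a useful sanity check, where all three identities collapse to direct scalar equalities (e.g., part (2) reduces to $A_1 + B_1 = u^2 X_1^2 + v^2 X_1^{-2} + w(u X_1 + v X_1^{-1})$, which indeed matches the right-hand side after cancellation of the $-uv$ terms).
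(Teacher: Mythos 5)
There is a genuine gap, and it sits exactly where you flag ``the main obstacle'': the standalone proof of part~\eqref{cor:jacobitrudi3cor2}. The mechanism you want to borrow from the proof of Lemma~\ref{jacobitrudi3lem} --- doubling the variables via $X_{n+i}=u^{-1}vX_i^{-1}$, applying Lemma~\ref{lem:det}, and left-multiplying by $(h_{j-i}(X_j,\ldots,X_{2n}))$ --- is only available there because the \emph{product} of two $n\times n$ determinants is first converted into a single $2n\times 2n$ determinant via $\det(A-B)\det(A+B)=\det\left(\begin{smallmatrix}A&B\\B&A\end{smallmatrix}\right)$, whose rows are then genuinely indexed by $2n$ variables. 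For the single determinant $\det(A_i^j+B_i^j)$ there are only $n$ rows; Lemma~\ref{lem:det} applied with $Y_i=X_i$ produces only the $n$-variable Vandermonde $\prod_{i<j}(X_j-X_i)$ and has no way to generate the cross factors $\prod_{i<j}(u-vX_i^{-1}X_j^{-1})$ in the denominator of~\eqref{cor:jacobitrudi3cor2}. If instead you expand $A_i^j+B_i^j=f_j(X_i)+f_j(X_{n+i})$ and distribute the determinant over rows (your ``Cauchy--Binet-style expansion''), you get $2^n$ determinants each with a \emph{different} Vandermonde denominator over a different $n$-subset of $\{X_1,\ldots,X_{2n}\}$, and these do not recombine into a single quotient to which Lemma~\ref{lem:det} applies. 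So the block-triangularization you describe never gets off the ground, and since your derivations of~\eqref{cor:jacobitrudi3cor1} and~\eqref{cor:jacobitrudi3cor3} are by division from~\eqref{cor:jacobitrudi3cor2} through Lemma~\ref{jacobitrudi3lem}, the whole argument is left without a base. (Your denominator bookkeeping and the $n=1$ check are fine, but they do not supply the missing step.)

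The paper escapes this chicken-and-egg problem by proving all three identities \emph{simultaneously by induction on $n$}: identity~\eqref{cor:jacobitrudi3cor3} at level $n$ is obtained by expanding its determinant along the first column, invoking identity~\eqref{cor:jacobitrudi3cor2} at level $n-1$ for each cofactor, rewriting $h_k^{(l)}=h_k-(uX_l+vX_l^{-1})h_{k-1}+uv\,h_{k-2}$, and then using multilinearity together with Lemma~\ref{prefactor} to kill all but one of the $3^{n-1}$ resulting determinants. Only then are~\eqref{cor:jacobitrudi3cor1} and~\eqref{cor:jacobitrudi3cor2} at level $n$ recovered by the division argument you propose, using Lemma~\ref{jacobitrudi3lem}\,\eqref{lem:jacobitrudi3lem2} and~\eqref{lem:jacobitrudi3lem1} respectively. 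If you want to salvage your plan, you must either reproduce an induction of this kind or find a genuinely independent proof of one of the three identities; the sketch as written does not provide one.
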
 
	
	\begin{proof} 
		The proof is by induction on $n$. For small $n$, the identities can be shown by direct computations. We assume that 
		all identities are proved for $n-1$. 
		We start by proving the third identity. Now observe that, by expanding the determinant with respect to the first column and the induction 
		hypothesis applied to the case $n-1$ in \eqref{cor:jacobitrudi3cor2}, we obtain
		\begin{multline} 
			\label{step}
			\frac{1}{2}  \frac{\det_{1 \le i,j \le n} \left( \left(u^2 X_i^2 + u w X_i\right)^{j-1} + \left(v^2 X_i^{-2} + v w X_i^{-1}\right)^{j-1} \right)}
			{\prod_{1 \le i < j \le n} (X_j-X_i)(u - v X_i^{-1} X_j^{-1})} \\
			= \sum_{l=1}^n (-1)^{l+1} 
			\frac{\det_{\substack{1 \le i \le  n, i \not=l  \\ 1 \le j \le n-1}}
				\left( \left(u^2 X_i^2 + u w X_i\right)^{j} + \left(v^2 X_i^{-2} + v w X_i^{-1}\right)^{j} \right)}
			{\prod_{1 \le i < j \le n} (X_j-X_i)(u - v X_i^{-1} X_j^{-1})} \\
			= \sum_{l=1}^n \frac{1}{\prod_{j \not= l} (X_j-X_l)(u-v X_l^{-1} X_j^{-1})} 
			\det_{1 \le i, j \le n-1} \left( \sum_{k=j}^{2j} \binom{j}{k-j} w^{2j-k}  \left( h^{(l)}_{k-i+1} - u^{-i+n} v^{-i+n} h^{(l)}_{k+i-2n+1} \right) \right), 
		\end{multline} 
		where the argument of the complete homogeneous symmetric functions in the $l$-th summand is 
		\begin{equation}
			\label{argument}
			(u X_1,\ldots,\widehat{u X_l},\ldots,u X_n,v X_1^{-1},\ldots,\widehat{v X_l^{-1}},\ldots,v X_n^{-1}),
		\end{equation} 
		which is indicated by $h^{(l)}$. We will use 
		\begin{multline*} 
			h_k(X_1,\ldots,\widehat{X_p},\ldots,\widehat{X_q},\ldots,X_n) = h_k(X_1,\ldots,\widehat{X_q},\ldots,X_n) - X_p \, h_{k-1}(X_1,\ldots,\widehat{X_q},\ldots,X_n) \\
			= h_k(X_1,\ldots,X_n) - (X_p+X_q) h_{k-1}(X_1,\ldots,X_n) + X_p X_q h_{k-2}(X_1,\ldots,X_n).
		\end{multline*} 
		Applied to our argument in \eqref{argument}, we see that 
		\begin{equation*}
		h_k^{(l)}= h_k - (u X_l + v X_l^{-1}) h_{k-1} + u v\,  h_{k-2},
		\end{equation*}
		and, therefore, \eqref{step} is further equal to 
		\begin{multline}
			\label{eq:splittedhsum}
			\sum_{l=1}^n \frac{1}{\prod_{j \not= l} (X_j-X_l)(u-v X_l^{-1} X_j^{-1})} 
			\det_{1 \le i, j \le n-1} \left( \sum_{k=j}^{2j} \binom{j}{k-j} w^{2j-k} \right. \\
			\times ( h_{k-i+1} - u^{-i+n+1} v^{-i+n+1} h_{k+i-2n-1} 
			- (u X_l + v X_l^{-1}) (h_{k-i} - u^{-i+n} v^{-i+n} h_{k+i-2n})  \\
			\left. \phantom{\sum \binom{n}{k}} \left. + u v (h_{k-i-1} - u^{-i+n-1} v^{-i+n-1} h_{k+i-2n+1}) \right) \right).
		\end{multline} 
		Setting 
		\begin{equation*}
		a_{i,j} = \sum_{k=j}^{2j} \binom{j}{k-j} w^{2j-k} (h_{k-i} - u^{-i+n} v^{-i+n} h_{k+i-2n}),
		\end{equation*}
		the determinant is equal to
		\begin{equation*}
		\det_{1 \le i,j \le n-1} \left( a_{i-1,j} - (u X_l + v X_l^{-1}) a_{i,j} + u v a_{i+1,j} \right).
		\end{equation*}
		By the linearity in the rows, we can write each of the $n$ determinants in \eqref{eq:splittedhsum} as a sum of $3^{n-1}$ determinants, by choosing in a particular row $i$ either of the three terms, namely $a_{i-1,j}$, $-(u X_l + v X_l^{-1}) a_{i,j}$ or $u v a_{i+1,j}$. We pull out $u X_l + v X_l^{-1}$ whenever we choose the second term, so that the determinant is independent of $l$. Across the different choices of $l$, we combine the determinants where in each row we have made precisely the same choices among the three terms (all these determinants are of course equal since we pulled out the appropriate power of $u X_l + v X_l^{-1}$). If we have chosen $m$ times the second term, then we have a prefactor that is equal to the left-hand side of the identity in Lemma~\ref{prefactor}. By the lemma, the prefactor is only nonzero if $m=n-1$. This establishes \eqref{cor:jacobitrudi3cor3} for $n$.
		
		Now \eqref{cor:jacobitrudi3cor1} follows from \eqref{cor:jacobitrudi3cor3} and Lemma~\ref{jacobitrudi3lem}~\eqref{lem:jacobitrudi3lem2}, whereas \eqref{cor:jacobitrudi3cor2} finally follows from \eqref{cor:jacobitrudi3cor1} and Lemma~\ref{jacobitrudi3lem}~\eqref{lem:jacobitrudi3lem1}.
	\end{proof} 

\subsection{\texorpdfstring{Second proof of Theorem~\ref{interpret4}}{Second proof of Theorem 2.2}}
\label{sec:Prooffourthinterpretation}

Corollary~\ref{jacobitrudi3cor}~\eqref{cor:jacobitrudi3cor1} implies that the generating function in \eqref{bialternant} is equal to
\begin{equation}
	\label{jacobitrudi3} 
	\frac{(-1)^{\binom{n}{2}}}{2} \prod_{i=1}^n X_i^{n-1} \det_{1 \le i,j \le n} \left(  \sum_{k=j}^{2j} \binom{j}{k-j} w^{2j-k} 
	(h_{k+i-n-1} + u^{i-1} v^{i-1} h_{k-i-n+1} )  \right),
\end{equation} 
where the arguments $(u X_1,\ldots,u X_n,v X_1^{-1},\ldots,v X_n^{-1})$ of the complete homogeneous symmetric functions are omitted. We will multiply the matrix underlying the determinant in \eqref{jacobitrudi3} from the left with 
the following matrix of determinant $\frac{1}{2}$:
\begin{equation}
	\label{eq:onehalfdet}
	\left( (-1)^{i+j} \frac{1}{1+[j=1]} e_{i-j}(u X_{n-i+2},\ldots,u X_n,v X_{n-i+2}^{-1},\ldots,v X_n^{-1}) \right)_{1 \le i,j \le n}.
\end{equation}
For this purpose, we need the following lemma.
\begin{lemma} For $n \ge 1, m+n \ge 0$ and $1 \le i \le n$, we have 
	\begin{multline*} 
		\sum_{l=1}^n (-1)^{i+l} \frac{1}{1+[l=1]} e_{i-l}(u X_{n-i+2},v X_{n-i+2}^{-1},\ldots,u X_n,v X_n^{-1}) \\
		\times \left[ h_{m+l-1}(u X_1,v X_1^{-1},\ldots,u X_n,v X_n^{-1}) 
		+ (u v)^{l-1} h_{m-l+1}(u X_1,v X_1^{-1},\ldots,u X_n,v X_n^{-1}) \right]  \\
		= h_{m+i-1}(u X_1,v X_1^{-1},\ldots,u X_{n-i+1},v X_{n-i+1}^{-1}).
	\end{multline*} 
\end{lemma}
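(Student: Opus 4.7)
The plan is to introduce the variable sets
\[
Y = (uX_1, vX_1^{-1},\ldots, uX_n, vX_n^{-1}), \quad Y' = (uX_1, vX_1^{-1},\ldots, uX_{n-i+1}, vX_{n-i+1}^{-1}),
\]
and $Y'' = (uX_{n-i+2}, vX_{n-i+2}^{-1},\ldots, uX_n, vX_n^{-1})$, so that $Y = Y' \sqcup Y''$ and $|Y''| = 2(i-1)$. With these abbreviations, after the substitution $k = i - l$, the left-hand side becomes
\[
\sum_{k=0}^{i-1} (-1)^{k}\, \frac{1}{1+[k=i-1]}\, e_k(Y'')\,\Bigl[ h_{m+i-1-k}(Y) + (uv)^{i-1-k}\, h_{m-i+1+k}(Y) \Bigr],
\]
so the target identity asks us to prove that this equals $h_{m+i-1}(Y')$.

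First I would record the standard generating-function identity coming from the splitting $Y = Y' \sqcup Y''$: multiplying $\prod_{y \in Y}(1-yt)^{-1}$ by $\prod_{y \in Y''}(1-yt)$ yields $\prod_{y \in Y'}(1-yt)^{-1}$, hence
\[
\sum_{k=0}^{2(i-1)} (-1)^{k}\, e_k(Y'')\, h_{m+i-1-k}(Y) = h_{m+i-1}(Y').
\]
The hypothesis $m + n \geq 0$ together with $i \leq n$ guarantees that all relevant complete homogeneous symmetric functions fall into the range where this identity is valid under the conventions of \eqref{negative} (the lowest index appearing is $m - i + 1 \geq -2n + 1$, which is exactly the range where $h$ either agrees with the usual power-series coefficient or is zero by definition).

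The key observation, which I expect to be the main algebraic point, is a self-duality of $e_k(Y'')$ induced by the pairing: each pair $(uX_j, vX_j^{-1})$ in $Y''$ has product $uv$, so
\[
\prod_{j=n-i+2}^{n}(1 + uX_j t)(1 + vX_j^{-1}t) = (uv)^{i-1} t^{2(i-1)} \prod_{j=n-i+2}^{n}\bigl(1 + (uX_j t)^{-1}\bigr)\bigl(1 + (vX_j^{-1}t)^{-1}\bigr),
\]
and comparing coefficients of $t^k$ on both sides gives the duality
\[
e_{2(i-1)-k}(Y'') = (uv)^{i-1-k}\, e_k(Y'').
\]

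The last step is to apply this duality to the second summand $(uv)^{i-1-k} e_k(Y'') h_{m-i+1+k}(Y)$ and re-index via $k' = 2(i-1) - k$; one finds that the range $k \in \{0,\ldots,i-1\}$ maps bijectively to $k' \in \{i-1,\ldots,2(i-1)\}$, and the second part of the sum equals
\[
\sum_{k'=i-1}^{2(i-1)} (-1)^{k'}\, \frac{1}{1+[k'=i-1]}\, e_{k'}(Y'')\, h_{m+i-1-k'}(Y).
\]
Adding this to the first part, the two "half" contributions at $k = k' = i-1$ combine to a full one, and the union of the two ranges gives $\{0,\ldots,2(i-1)\}$. Thus the total equals the full sum above, which by the generating-function identity is $h_{m+i-1}(Y')$, completing the proof.
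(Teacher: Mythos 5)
Your argument is correct and is essentially the paper's own proof in a different packaging: the paper likewise combines the generating-function convolution $\sum_{k=0}^{2(i-1)}(-1)^k e_k(Y'')\,h_{m+i-1-k}(Y)=h_{m+i-1}(Y')$ (its identity \eqref{id1}) with the palindromic duality $e_{2(i-1)-k}(Y'')=(uv)^{i-1-k}e_k(Y'')$ coming from the paired variables $uX_j,\,vX_j^{-1}$ (used to prove its identity \eqref{id2}), the only difference being that the paper phrases the second step as a term-by-term cancellation of the negative-index tail rather than as your direct re-indexing of the second summand onto the upper half of the convolution range. The one point worth spelling out in your duality derivation is that the multiset of inverses of $Y''$ equals $(uv)^{-1}\cdot Y''$, which is what converts the reversed elementary symmetric function of the inverted variables back into $e_k(Y'')$.
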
 

\begin{proof} The left-hand side can be written as 
	\begin{multline*} 
		\sum_{l=0}^{i-1} (-1)^{i+l-1} e_{i-1-l}(u X_{n-i+2},v X_{n-i+2}^{-1},\ldots,u X_n,v X_n^{-1}) h_{m+l}(u X_1,v X_1^{-1},\ldots,u X_n,v X_n^{-1}) \\
		+ \sum_{l=1}^{i-1} (-1)^{i+l-1} (u v)^{l} e_{i-1-l}(u X_{n-i+2},v X_{n-i+2}^{-1},\ldots,u X_n,v X_n^{-1})  h_{m-l}(u X_1,v X_1^{-1},\ldots,u X_n,v X_n^{-1}). 
	\end{multline*} 
	The result is an immediate consequence of two identities which state that
	\begin{multline} 
		\label{id1}
		h_{m+i-1}(u X_1,v X_1^{-1},\ldots,u X_{n-i+1},v X_{n-i+1}^{-1})\\
		= \sum_{l=-i+1}^{i-1} (-1)^{i+l-1} e_{i-1-l}(u X_{n-i+2},v X_{n-i+2}^{-1},\ldots,u X_n,v X_n^{-1}) h_{m+l}(u X_1,v X_1^{-1},\ldots,u X_n,v X_n^{-1})
	\end{multline}
	and that the following vanishes
	\begin{multline}
		\label{id2}
		\sum_{l=-i+1}^{-1} (-1)^{i+l} e_{i-1-l}(u X_{n-i+2},v X_{n-i+2}^{-1},\ldots,u X_n,v X_n^{-1}) h_{m+l}(u X_1,v X_1^{-1},\ldots,u X_n,v X_n^{-1}) \\
		+ \sum_{l=1}^{i-1} (-1)^{i+l-1} (u v)^{l} e_{i-1-l}(u X_{n-i+2},v X_{n-i+2}^{-1},\ldots,u X_n,v X_n^{-1})  h_{m-l}(u X_1,v X_1^{-1},\ldots,u X_n,v X_n^{-1}).
	\end{multline}
	In order to show the first identity, observe that the right-hand side of \eqref{id1} can be written as 
	\begin{multline*} 
		\sum_{l=-i+1}^{i-1} \langle t^{i-l-1} \rangle \left[ 
		\prod_{p=n-i+2}^n (1-t u X_p)(1-t v X_p^{-1}) \right]  
		\langle t^{l+m} \rangle \left[ \prod_{p=1}^n (1-t u X_p)^{-1}(1-t v X_p^{-1})^{-1} \right] \\
		= \langle t^{i+m-1} \rangle \left[ \prod_{p=1}^{n-i+1} (1-t u X_p)^{-1}(1-t vX_p^{-1})^{-1} \right] \\ = h_{i+m-1}(u X_1,\ldots,u X_{n-i+1},v X_1^{-1},\ldots,v X_{n-i+1}^{-1}).
	\end{multline*} 
	The second identity follows after showing that the $j$-th summand in the first sum in 
	\eqref{id2} is the negative of the $(i-j)$-th summand  in the second sum, that is,
	\begin{multline*}
		(-1)^{j} e_{2i-j-1}(u X_{n-i+2},v X_{n-i+2}^{-1},\ldots,u X_n,v X_n^{-1}) h_{m-i+j}(u X_1,v X_1^{-1},\ldots,u X_n,v X_n^{-1})= \\
		-[(-1)^{j+1} (u v)^{i-j} e_{j-1}(u X_{n-i+2},u X_{n-i+2}^{-1},\ldots,v X_n,v X_n^{-1})  h_{m-i+j}(u X_1,v X_1^{-1},\ldots,u X_n,v X_n^{-1})], 
	\end{multline*} 
	for $j=1,2,\ldots,i-1$, which is equivalent to 
	\begin{equation*}
	e_{2i-j-1}(u Y_1,v Y_1^{-1},\ldots,u Y_{i-1},v Y_{i-1}^{-1}) 
	= (u v)^{i-j} e_{j-1}(u Y_1,v Y_1^{-1},\ldots,u Y_{i-1},v Y_{i-1}^{-1})
	\end{equation*} 
	by setting $Y_k=X_{k+n-i+1}$ for $k=1,2,\dots,i-1$.
	
	The identity follows as, in each monomial of the expression on the left-hand side, we choose at least 
	$2i-j-1-(i-1)=i-j$ pairs of variables $(u Y_k, v Y_k^{-1})$ and each such pair contributes a factor $u v$. 
\end{proof} 

The previous lemma implies that \eqref{jacobitrudi3} multiplied with \eqref{eq:onehalfdet} is equal to 
\begin{multline}
	\label{jacobitrudi4} 
	(-1)^{\binom{n}{2}}\prod_{i=1}^n X_i^{n-1} \det_{1 \le i,j \le n} \left(  \sum_{k=j}^{2j} \binom{j}{k-j} w^{2j-k} 
	h_{k+i-n-1}(u X_{1},v X_{1}^{-1},\ldots,u X_{n+1-i},v X_{n+1-i}^{-1}) \right) \\
	= \prod_{i=1}^n X_i^{n-1} \det_{1 \le i,j \le n} \left(  \sum_{k=j}^{2j} \binom{j}{k-j} w^{2j-k} 
	h_{k-i}(u X_{1},v X_{1}^{-1},\ldots,u X_{i},v X_{i}^{-1}) \right).
\end{multline} 

We consider families of $n$ nonintersecting lattice paths from $A_i=(i,2i)$ to $E_j=(j,-j+1)$ for $1 \le i,j \le n$, see Figure~\ref{fig:ExampleSecondOfThirdInterpretation}. The step set as well as the edge weights depend on whether or not we are below the line $y=1$. 
\begin{itemize} 
	\item Above and on the line $y=1$, the step set is $\{(1,0),(0,-1)\}$. Horizontal steps at height 
	$1,2,3,4,\ldots,2n$ have weight $u X_1, v X_1^{-1},u X_{2}, v X_{2}^{-1},\ldots, u X_n, 
	v X_n^{-1}$, respectively.  Assuming that $(k,1)$ is the last lattice point on the line $y=1$, the generating function of such lattice paths from 
	$(i,2i)$ to $(k,1)$ is 
	\begin{equation*}
		h_{k-i}(u X_{1},v X_{1}^{-1},\ldots,u X_i^{-1},v X_i^{-1}),
	\end{equation*} 
	\item Below the line $y=1$, the step set is $\{(-1,-1),(0,-1)\}$, and since we want to reach $(j,-j+1)$, there are $k-j$ steps of type $(-1,-1)$ and $2j-k$ steps of type $(0,-1)$, which gives in total $\binom{j}{k-j}$ choices. The latter steps carry the weight $w$. Equivalently, we can also choose that the $(-1,-1)$-steps are equipped with the weight $w^{-1}$ and that there is an overall factor of $w^{\binom{n+1}{2}}$.
	\item Again, we have to multiply with the overall factor $\prod_{i=1}^n X_i^{n-1}$.  
\end{itemize} 

\begin{figure}[htb]
	\centering
	\begin{tikzpicture}[scale=.45,baseline=(current bounding box.center)]
		\fill [light-gray] (0,-5.75) rectangle (10.75,1);
		
		\draw [help lines,step=1cm,dashed] (0,-5.75) grid (10.75,12.75);
		
		\fill (1,2) circle (5pt);
		\fill (2,4) circle (5pt);
		\fill (3,6) circle (5pt);
		\fill (4,8) circle (5pt);
		\fill (5,10) circle (5pt);
		\fill (6,12) circle (5pt);
		
		\fill (1,0) circle (5pt);
		\fill (2,-1) circle (5pt);
		\fill (3,-2) circle (5pt);
		\fill (4,-3) circle (5pt);
		\fill (5,-4) circle (5pt);
		\fill (6,-5) circle (5pt);
		
		\draw[->,thick] (0,0)--(10.75,0) node[right]{$x$};
		\draw[->,thick] (0,-5.75)--(0,12.75) node[above]{$y$};
		
		\path[decoration=arrows, decorate] (1,2) --++ (1,0) --++ (0,-1) --++ (-1,-1);
		
		\path[decoration=arrows, decorate] (2,4) --++ (0,-1) --++ (1,0) --++ (0,-1) --++ (1,0) --++ (0,-1) --++ (-1,-1) --++ (-1,-1);
		
		\path[decoration=arrows, decorate] (3,6) --++ (0,-1) --++ (0,-1) --++ (1,0) --++ (1,0) --++ (0,-1) --++ (0,-1) --++ (0,-1) --++ (-1,-1) --++ (0,-1) --++ (-1,-1);
		
		\path[decoration=arrows, decorate] (4,8) --++ (0,-1) --++ (0,-1) --++ (1,0) --++ (0,-1) --++ (1,0) --++ (0,-1) --++ (0,-1) --++ (1,0) --++ (0,-1) --++ (0,-1) --++ (-1,-1) --++ (-1,-1) --++ (0,-1) --++ (-1,-1);
		
		\path[decoration=arrows, decorate] (5,10) --++ (0,-1) --++ (0,-1) --++ (1,0) --++ (0,-1) --++ (0,-1) --++ (1,0) --++ (1,0) --++ (0,-1) --++ (0,-1) --++ (1,0) --++ (0,-1) --++ (0,-1) --++ (0,-1) --++ (-1,-1) --++ (0,-1) --++ (-1,-1) --++ (-1,-1) --++ (-1,-1);
		
		\path[decoration=arrows, decorate] (6,12) --++ (0,-1) --++ (0,-1) --++ (0,-1) --++ (1,0) --++ (0,-1) --++ (0,-1) --++ (1,0) --++ (1,0) --++ (0,-1) --++ (0,-1) --++ (1,0) --++ (0,-1) --++ (0,-1) --++ (0,-1) --++ (0,-1) --++ (0,-1) --++ (-1,-1) --++ (0,-1) --++ (-1,-1) --++ (-1,-1) --++ (-1,-1);
		
	\end{tikzpicture}
	
	\caption{Example of the lattice path interpretation of \eqref{jacobitrudi4} for $n=6$. The weight of these paths is $u^7 v^9 w^5 X_1^{-2} X_2^{-1} X_3^{-1} X_4 X_5$. Note that this family of nonintersecting lattice paths corresponds to the pair of \CSPP and \RSPP in Section~\ref{sec:MainResults}.}
	\label{fig:ExampleSecondOfThirdInterpretation}
\end{figure}

To conclude this proof of Theorem~\ref{interpret4}, we argue somewhat similar as in Section~\ref{sec:third} where we derived the plane partition representation. The horizontal steps on and above the line $y=1$ correspond to the entries in the plane partition $P$. Each odd entry $2i-1$ contributes $u X_i$ multiplicatively to the weight, while each even entry $2i$ contributes $v X_i^{-1}$. The entries of $Q$ correspond to the diagonal steps left of the $y$-axis as follows: Similar to before, their distances from the line $y=x$ yield a \CSPP $Q'$. By the same manipulations as in the end of Section~\ref{sec:third}, that is, first subtracting $i-1$ from the entries in the $i$-th row from the bottom of $Q'$ and then subtracting $j-1$ from all entries in the $j$-th column of $Q'$, we finally obtain $Q$.

The pair $(P,Q)$ of the nonintersecting lattice paths from Figure~\ref{fig:ExampleSecondOfThirdInterpretation} is given in Section~\ref{sec:MainResults}. 

Note that when setting $u=v=1$, $w=-1$ and $X_i=1$ for $1 \le i \le n$ in \eqref{jacobitrudi4}, we obtain $\binom{i+j-1}{2j-i}=\binom{i+j-1}{2i-j-1}$ by \eqref{hspecial}  as entries of the determinant's underlying matrix and 
thus the unrefined count \eqref{unrefined} of cyclically and vertically symmetric lozenge tilings of a hexagon with a central triangular hole of size $2$.  

\bibliographystyle{alphaurl}
\bibliography{asmpp}

\end{document}